\newcommand{\longto}{\longrightarrow}
\newcommand*\rel@kern[1]{\kern#1\dimexpr\macc@kerna}
\newcommand*\widebar[1]{%
  \begingroup
  \def\mathaccent##1##2{%
    \rel@kern{0.8}%
    \overline{\rel@kern{-0.8}\macc@nucleus\rel@kern{0.2}}%
    \rel@kern{-0.2}%
  }%
  \macc@depth\@ne
  \let\math@bgroup\@empty \let\math@egroup\macc@set@skewchar
  \mathsurround\z@ \frozen@everymath{\mathgroup\macc@group\relax}%
  \macc@set@skewchar\relax
  \let\mathaccentV\macc@nested@a
  \macc@nested@a\relax111{#1}%
  \endgroup
}
\DeclareMathOperator\Aut{Aut}
\DeclareMathOperator\Hom{Hom}
\DeclareMathOperator\Spec{Spec}
\DeclareMathOperator\rmO{O}
\DeclareMathOperator\SO{SO}
\DeclareMathOperator\Spin{Spin}
\DeclareMathOperator\Res{Res}
\DeclareMathOperator\Nm{Nm}
\DeclareMathOperator\Br{Br}
\DeclareMathOperator\sgn{sgn}
\DeclareMathOperator\tr{tr}
\DeclareMathOperator\disc{disc}
\DeclareMathOperator\inv{inv}
\def\bQ{{\mathbf{Q}}} \def\bZ{{\mathbf{Z}}} 
\def\bF{{\mathbf{F}}} \def\bG{{\mathbf{G}}} \def\bR{{\mathbf{R}}}
\def\bC{{\mathbf{C}}}
\def\cO{{\mathcal{O}}}  
  \def\cL{{\mathcal{L}}} 
 \def\cD{{\mathcal D}} 
  \def\cB{{\mathcal{B}}}
\def\cS{{\mathcal S}}
  \def\rH{{\rm H}}
\def\fm{{\mathfrak{m}}}
\def\id{{\rm id}}
\def\rho{\varrho}
\def\sp{{\rm sp}}
\def\un{{\rm un}}
\theoremstyle{plain}
\newtheorem{theorem}{Theorem}[section]
\newtheorem{corollary}[theorem]{Corollary}
\newtheorem*{corollary*}{Corollary}
\newtheorem{lemma}[theorem]{Lemma}
\newtheorem{bigthm}{Theorem}
\newtheorem{proposition}[theorem]{Proposition}
\theoremstyle{definition}
\newtheorem{definition}[theorem]{Definition}
\newtheorem{example}[theorem]{Example}
\newtheorem{remark}[theorem]{Remark}
\newcommand{\Creciprocal}{(C1)}
\newcommand{\Creal}{(C2)}
\newcommand{\Csquares}{(C3)}
\begin{document}

\title[Unimodular lattices and equivariant Witt groups]{Automorphisms of even unimodular lattices \\  and equivariant Witt groups}

\author{Eva Bayer-Fluckiger}

\author{Lenny Taelman}

\begin{abstract}
We characterize the irreducible polynomials that occur as a characteristic polynomial of an automorphism of an even unimodular lattice of given signature, generalizing a theorem of Gross and McMullen. As part of the proof, we give a general criterion in terms of Witt groups for a bilinear form equipped with an action of a group $G$ over a discretely valued field to contain a unimodular $G$-stable lattice. 
\end{abstract}

\maketitle

\section{Introduction}

\subsection{Statement of the main result}

Let $r$ and $s$ be non-negative integers, and let $\Lambda$ be an even unimodular lattice of signature $(r,s)$. Such a lattice exists if and only if $r\equiv s\bmod 8$, and it is uniquely determined by $(r,s)$ if $r$ and $s$ are non-zero. Let $\alpha \in \SO(\Lambda)$ and denote by $S$ its characteristic polynomial. Denote by $m(S)$ be the number of complex roots $z$ of $S$ with $|z|>1$ (counted with multiplicity). Gross and McMullen \cite{GrossMcMullen02} show that if $S$ has no linear factor, then $S$ satisfies
\begin{enumerate}
\item[\Creciprocal]  $S$ is reciprocal (\emph{i.e.}~$t^{r+s}S(1/t)=S(t)$),
\item[\Creal]  $m(S) \leq r$, $m(S)\leq s$ and $m(S)\equiv r \equiv s \pmod 2$,
\item[\Csquares] $|S(1)|$, $|S(-1)|$ and $(-1)^{\frac{r+s}{2}}S(1)S(-1)$ are squares.
\end{enumerate}
They speculate that for a monic irreducible $S\in\bZ[t]$ of degree $r+s$, the above conditions may be \emph{sufficient} for the existence of an even unimodular lattice $\Lambda$ of signature $(r,s)$ and an $\alpha \in\SO(\Lambda)$ with characteristic polynomial $S$. They show that this is indeed so if  $|S(1)|=|S(-1)|=1$ (a different proof was given by the first author in \cite{Bayer02}). The main result of this paper confirms their speculation:

\begin{bigthm}\label{bigthm:even-unimodular-charpol}
Let $r$, $s$ be non-negative integers  satisfying $r\equiv s \bmod 8$. Let $P$ be a monic irreducible polynomial, and let $S$ be a power of $P$. Assume that $S$ has degree $r+s$, and that it satisfies {\rm \Creciprocal}, {\rm \Creal}, and {\rm \Csquares}.
Then there exists an even unimodular lattice $\Lambda$ of signature $(r,s)$ and an $\alpha \in \SO(\Lambda)$ with characteristic polynomial $S$.
\end{bigthm}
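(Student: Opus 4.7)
The plan is to translate the theorem into the construction of an equivariant bilinear form over $\bQ$ having prescribed local invariants, and then to invoke the paper's Witt-group criterion at each completion $\bQ_v$ to extract a $G$-stable even unimodular lattice, where $G=\langle\alpha\rangle$. Setting $A=\bQ[t]/(S)$, condition \Creciprocal{} makes the rule $t\mapsto t^{-1}$ into a well-defined involution $\sigma$ on $A$. A triple $(\Lambda\otimes\bQ,b,\alpha)$ with $\alpha$ an isometry of $b$ of characteristic polynomial $S$ is the same as an $A$-module $V$ of $\bQ$-dimension $\deg S=r+s$ with a non-degenerate symmetric $\bQ$-bilinear form $b$ compatible with $\sigma$; it suffices to take $V$ cyclic (i.e.\ $V=A$), so such forms correspond via a trace construction to elements $\lambda\in A^\sigma$.

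First I would produce such a $\lambda$, or equivalently a form $b_\lambda$ on $A$, of signature $(r,s)$ and with trivial discriminant in $\bQ^\times/\bQ^{\times 2}$. The real signature of $b_\lambda$ is controlled by the splitting of $A\otimes\bR$ into $\sigma$-stable factors together with a choice of sign on each, and this is exactly what \Creal{} regulates (through $m(S)$ and its parity). The discriminant of $b_\lambda$ can be expressed, up to squares, in terms of $|S(1)|$, $|S(-1)|$ and $(-1)^{(r+s)/2}S(1)S(-1)$, hence becomes trivial under \Csquares.

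Next, at each completion $\bQ_v$, I would check that $(V,b)\otimes\bQ_v$ contains a $G$-stable unimodular lattice by applying the paper's equivariant Witt-group criterion: the obstruction lives in an equivariant Witt group of $\bQ_v$ and must be shown to vanish. At $v=\infty$ this is immediate from \Creal. At odd primes $v$, the criterion is handled by combining \Creciprocal{} with the discriminant normalization from \Csquares. The delicate case is $v=2$, where in addition one must ensure the lattice can be chosen \emph{even}; the hypothesis $r\equiv s\pmod 8$ enters here and pins down the dyadic invariants of $b$ just enough to match the Witt-group obstruction. Once local existence is known at every place, an adelic patching argument -- with obstruction in a Tate--Shafarevich-type group killed by the vanishing of the global Hasse--Witt invariant established in the previous step -- assembles the local lattices into a global $\alpha$-stable even unimodular $\Lambda\subset V$. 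The condition $\alpha\in\SO(\Lambda)$ is then automatic since $\det(\alpha)=(-1)^{r+s}S(0)=1$ by \Creciprocal{} and $r+s$ even.

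The principal obstacle is the non-semisimplicity of $A$ when $k\geq 2$: the classical Landherr-type classification of Hermitian forms does not apply directly to $A=\bQ[t]/(P^k)$, which is precisely why the equivariant Witt-group criterion of this paper is the indispensable tool. Within that step, the most delicate task is the dyadic analysis for the evenness constraint, where \Creciprocal, \Creal, \Csquares, and $r\equiv s\pmod 8$ must conspire to kill the Witt-group obstruction at the prime $2$ while simultaneously being compatible with the vanishing at all other places via a product-formula identity.
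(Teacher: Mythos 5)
Your overall architecture (hermitian/trace construction, local analysis at each completion via the equivariant Witt group, evenness via a dyadic refinement, patching by a product formula) matches the paper's. But there is a genuine gap at the very first step. You set $A=\bQ[t]/(S)=\bQ[t]/(P^N)$ and propose to realize the forms as trace forms $(x,y)\mapsto\tr_{A/\bQ}(\lambda x\sigma(y))$ on the cyclic module $V=A$. For $N\geq 2$ the algebra $A$ is non-reduced, its trace form is identically degenerate on the nilradical, and so no choice of $\lambda\in A^\sigma$ produces a non-degenerate form this way; the parametrization ``forms $\leftrightarrow$ $\lambda\in A^\sigma$'' simply fails. You correctly identify non-semisimplicity as the principal obstacle, but the equivariant Witt-group machinery of the paper does not repair this: it governs the existence of $G$-stable unimodular lattices inside an already-given non-degenerate form over a local field, not the construction of the global form itself. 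The paper sidesteps the issue entirely by a device absent from your proposal: it sets $F=\bQ[t]/(P)$ with fixed field $F_0$, chooses an auxiliary degree-$N$ extension $E_0/F_0$ linearly disjoint from $F$, and works with the \emph{field} $E=E_0\otimes_{F_0}F$, on which multiplication by $\alpha$ has minimal polynomial $P$ but characteristic polynomial $P^N=S$. All subsequent steps (étale algebras $E_w$, non-degenerate trace forms $b_\lambda$, the group $\mu(E,\sigma)$, Brauer-group descriptions) depend on $E$ being a field. Without this, you would have to develop hermitian forms over the Gorenstein ring $\bQ[t]/(P^N)$ via a non-trace linear functional, a substantially different and harder route.

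A secondary, smaller issue: in the patching step you say the adelic obstruction is ``killed by the vanishing of the global Hasse--Witt invariant established in the previous step,'' but your previous step only established signature and trivial discriminant. The paper's actual mechanism is to compare the chosen local forms with $\bQ_v\otimes\Lambda_{r,s}$ for a global even unimodular lattice $\Lambda_{r,s}$ (this is the main place $r\equiv s\bmod 8$ enters, not only the dyadic analysis), and to realize the obstruction $\sum_w\theta_w(\lambda_w)\in\bZ/2\bZ$ as the difference of two sums of Hasse--Witt invariants of \emph{global} forms, each of which vanishes by reciprocity. Some such concrete identification is needed; the local contributions are not individually computable in any evident way.
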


Gross and McMullen have observed \cite[Prop.~5.2]{GrossMcMullen02} that the theorem does not hold for  $S$ that have distinct irreducible factors.  There are results by the first author on reducible characteristic polynomials of \emph{definite} even unimodular lattices \cite{Bayer84,Bayer87}, but in general it is not even clear what to conjecture. 

As in \cite[\S~8]{GrossMcMullen02} and \cite[Thm~3.4]{McMullen02}, Theorem \ref{bigthm:even-unimodular-charpol} together with the Torelli theorem for K3 surfaces has the following immediate consequence.

\begin{corollary*}\label{cor:K3}
Let $S\in \bZ[t]$ be a monic, irreducible, reciprocal polynomial of degree $22$. Assume that $S$ has precisely $1$ root $z\in \bC$ with $|z|>1$, and assume that $|S(1)|$, $|S(-1)|$  and $-S(1)S(-1)$ are squares. Then there exists a complex analytic K3 surface $X$ and an $\alpha\in \Aut X$, such that the characteristic polynomial of $\alpha^\ast$ on $\rH^2(X,\bZ)$ is $S$. \qed
\end{corollary*}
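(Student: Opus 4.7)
The plan is to derive the corollary from Theorem A (applied to the signature $(r,s)=(3,19)$) combined with the surjectivity of the period map and the strong Torelli theorem for K3 surfaces. First I would check that the numerical hypotheses on $S$ translate into the hypotheses of Theorem A for $(3,19)$: one has $19 - 3 = 16 \equiv 0 \pmod 8$, condition \Creciprocal{} is built in, condition \Creal{} reads $m(S)=1 \le 3$, $1 \le 19$, $1 \equiv 3 \equiv 19 \pmod 2$, and \Csquares{} reads $|S(1)|$, $|S(-1)|$, and $(-1)^{11}S(1)S(-1)=-S(1)S(-1)$ are squares, which is exactly what is assumed. Theorem A (with $P=S$) then yields an even unimodular lattice $\Lambda$ of signature $(3,19)$ and an $\alpha\in \SO(\Lambda)$ with characteristic polynomial $S$.

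Next I would endow $\Lambda$ with a K3-type Hodge structure for which $\alpha$ is a Hodge isometry. Since $S$ is irreducible of degree $22$, all eigenspaces of $\alpha$ on $\Lambda\otimes\bC$ are one-dimensional. The single real eigenvalue $\lambda>1$ and its reciprocal $\lambda^{-1}$ span a real plane of signature $(1,1)$; the remaining $20$ eigenvalues come in $10$ complex conjugate pairs on the unit circle, each contributing a real plane of signature $(2,0)$ or $(0,2)$. A signature count against $(3,19)$ forces exactly one such pair, say $\omega,\bar\omega$ with $|\omega|=1$ and $\omega\notin\bR$, to span a positive-definite plane. I would then declare $H^{2,0}$ to be the $\omega$-eigenspace (and $H^{0,2}$ the $\bar\omega$-eigenspace), and define $H^{1,1}$ as the orthogonal complement; by construction $\alpha$ preserves this decomposition. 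The surjectivity of the period map for K3 surfaces produces a complex analytic K3 surface $X$ together with an isometry $H^{2}(X,\bZ)\cong\Lambda$ of Hodge structures.

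Finally I would invoke the strong Torelli theorem to realize $\alpha$ geometrically. The action of $\alpha$ on the real vector space $H^{1,1}_{\bR}$ of signature $(1,19)$ preserves the cone $\{v : v\cdot v>0\}$; after possibly replacing $\alpha$ by $-\alpha$ (which has the same characteristic polynomial up to sign and acts on $H^{2,0}$ by $-1$, so can be absorbed by changing the marking $\omega\mapsto -\omega$ if $-\omega$ is again an eigenvalue; otherwise only one component of the positive cone is stabilised), one fixes a component $\cC^+$ preserved by $\alpha$. The real Weyl group generated by reflections in $(-2)$-classes of $H^{1,1}\cap\Lambda$ acts simply transitively on the chambers of $\cC^+$ minus the $(-2)$-walls, so after composing $\alpha$ with a suitable product of such reflections one obtains a Hodge isometry mapping the Kähler cone of $X$ to itself; by Burns--Rapoport this is induced by a unique automorphism of $X$. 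The composition differs from the original $\alpha$ only by elements acting trivially on $H^{2,0}$, so the characteristic polynomial on $H^2(X,\bZ)$ is still $S$.

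The main obstacle is the last step: making sure that after this Weyl-group adjustment the characteristic polynomial of the resulting automorphism of $X$ has not been altered. This is where one uses that the reflections involved act trivially on the transcendental lattice (the $\bZ[\alpha]$-submodule generated by a generic vector, on which $S$ acts irreducibly), so that composing with them does not change the characteristic polynomial on $H^2(X,\bZ)$.
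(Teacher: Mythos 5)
Your first two steps (the numerical check for $(r,s)=(3,19)$ and the construction of the weight-two Hodge structure from the eigenplane of the unique positive-definite unit-circle pair) are correct and are exactly the intended route; the paper gives no proof beyond citing \cite[\S~8]{GrossMcMullen02} and \cite[Thm~3.4]{McMullen02}. The genuine gap is in your Torelli step, where you are missing the one observation that makes it work: $\Pic(X)=\rH^{1,1}\cap\Lambda=0$. Indeed, this intersection spans an $\alpha$-stable $\bQ$-subspace of $\Lambda\otimes\bQ$, which is a one-dimensional vector space over $\bQ[t]/(S)$ because $S$ is irreducible of degree $22$; so that subspace is either $0$ or all of $\Lambda\otimes\bQ$, and the latter is excluded since $\rH^{2,0}\neq 0$. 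Hence there are no $(-2)$-classes whatsoever, the K\"ahler cone of $X$ is an entire component $\cC^+$ of the positive cone in $\rH^{1,1}_\bR$, and no Weyl-group correction is needed or even available. Your substitute argument would moreover not survive scrutiny: composing $\alpha$ with reflections does in general change the characteristic polynomial, the claim that ``acting trivially on $\rH^{2,0}$'' preserves the characteristic polynomial on $\rH^2$ is false, and your final fix --- that the reflections act trivially on the transcendental lattice --- is vacuous here, since the transcendental lattice is all of $\Lambda$ (up to finite index), so a reflection acting trivially on it is the identity.

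The second error is the proposed replacement of $\alpha$ by $-\alpha$: the characteristic polynomial of $-\alpha$ is $S(-t)$, not $S(t)$, so this move is not allowed. The correct way to see that $\alpha$ preserves the component $\cC^+$ is the following. Since $S$ has precisely one root $z$ with $|z|>1$, that root is fixed by complex conjugation, hence real, and $\alpha$ has a real isotropic eigenvector $v$ in $\rH^{1,1}_\bR$ lying on the boundary of the positive cone with $\alpha v = z v$; when $z>1$ the ray $\bR_{>0}v$ is fixed, so $\alpha$ preserves $\cC^+=$ K\"ahler cone, and Burns--Rapoport then yields the automorphism directly, with no modification of $\alpha$ at all. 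Note that this step genuinely requires $z>0$: if $z<-1$ the isometry interchanges the two components of the positive cone and cannot be induced by an automorphism, so the hypothesis should be read (as in \cite[Thm~3.4]{McMullen02}) as saying that $z$ is a Salem number.
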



\subsection{Strategy of proof}\label{intro:strategy}
We sketch the proof of Theorem \ref{bigthm:even-unimodular-charpol}, and refer to the main body of the paper for the details. Assume for simplicity that $S$ is irreducible.

Let $E=\bQ[t]/(S)$ and let $\alpha \in E$ be the class of $t$. By condition \Creciprocal, the field $E$ comes with an involution $\sigma$ characterized by $\sigma(\alpha) = \alpha^{-1}$. Let $E_0 \subset E$ be the fixed field under $\sigma$. For every $\lambda \in E_0^\times$ we have a symmetric $\bQ$-bilinear form
\[
	b_\lambda \colon E\times E \to \bQ,\, (x,y) \mapsto \tr_{E/\bQ} (\lambda x \sigma(y)),
\]
and $\alpha$ acts on $(E,b_\lambda)$ by isometries.  Condition \Creal~guarantees that for suitable $\lambda$ the form $b_\lambda$ will have signature $(r,s)$. It we can moreover choose $\lambda$ in such a way that $b_\lambda$ contains an $\alpha$-stable even unimodular lattice, then the theorem follows.

Gross and McMullen show that if $|S(1)|=|S(-1)|=1$, then for suitable $\lambda$  the lattice $\Lambda$ can be taken to be a fractional $\cO_E$-ideal. In general, however,  an $\alpha$-stable lattice $\Lambda$ will only be a module over the order $\bZ[\alpha^{\pm 1}] \subset \cO_E$, severely complicating the situation at places where $\bZ[\alpha^{\pm 1}]$ fails to be maximal.

We take a somewhat different approach. We first show that for every $p$ there are $\lambda\in (\bQ_p\otimes E_0)^\times$  for which there exists a unimodular $\alpha$-stable $\bZ_p$-lattice $\Lambda_p$ in $(\bQ_p \otimes E, b_\lambda)$. This is done by showing that under condition \Csquares~we can make the obstruction class in an `equivariant Witt group' over $\bF_p$ vanish, see \S~\ref{intro:local} below. A careful analysis at the prime $2$ shows that one can moreover arrange for the $\bZ_2$-lattice $\Lambda_2$ to be even.

Thus, we can locally solve the problem. To finish the proof, it suffices to show that there exists a global $\lambda$  satisfying the local conditions (at finite and infinite places). This is the case if and only if a certain  local-global obstruction class in $\bZ/2\bZ$ vanishes. It appears to be hard to compute the individual contributions to this obstruction directly, but we observe that the obstruction class can be realized as the difference of two obstruction classes coming from global objects, and hence that it must vanish. See \S~\ref{sec:the-proof} for more details.

\subsection{Local obstructions in equivariant Witt groups}\label{intro:local}
Let $K$ be a field and $G$ a group. We call a \emph{$G$-bilinear form} a finite-dimensional $K$-linear representation $V$ of $G$ equipped with a non-degenerate $G$-invariant symmetric bilinear form $V\times V \to K$. We say $V$ is \emph{neutral} if there exists a $G$-stable subspace $X\subset V$ with $X=X^\perp$. The \emph{equivariant Witt group} $W_G(K)$ is the quotient of the Grothendieck group of $G$-bilinear forms by the subgroup generated by the classes of neutral forms. If the group $G$ is trivial, then $W_G(K)$ coincides with the usual Witt group $W(K)$. In \S~\ref{sec:equivariant-witt} we develop the basic properties of these equivariant Witt groups, in the slightly more general (and more natural) context of $\epsilon$-symmetric bilinear forms over a ring with involution.

Now let $K$ be the fraction field of a discrete valuation ring $\cO_K$ with uniformizer $\pi$ and residue field $k$. We call an $\cO_K$-lattice $\Lambda$ in a $K$-bilinear form $V$ \emph{almost unimodular} if
\[
	\pi\Lambda^\vee \subset \Lambda \subset \Lambda^\vee,
\]
If $V$ is a $G$-bilinear form over $K$ and if $\Lambda \subset V$ is almost unimodular and $G$-stable, then the quotient $\Lambda^\vee/\Lambda$ equipped with the form
\[
	\frac{\Lambda^\vee}{\Lambda} \times \frac{\Lambda^\vee}{\Lambda}  \overset{b}{\longto} 
	\frac{\pi^{-1}\cO_K}{\cO_K} \overset{\pi}{\longto} k
\]
is a $G$-bilinear form over $k$. 

We call a $G$-bilinear form $V$ \emph{bounded} if $V$  contains a $G$-stable $\cO_K$-lattice, or equivalently, if the closure of the image of $G$ in $\rmO(V)$ is compact.
We denote by $W^b_G(K)$ the subgroup of $W_G(K)$ generated by the bounded forms. 

\begin{bigthm}\label{bigthm:reduction}Let $K$ be the fraction field of a discrete valuation ring $\cO_K$ with  residue field $k$, and fix a uniformizer $\pi$ of $\cO_K$. Let $G$ be a group. Then
\begin{enumerate}
\item every bounded $G$-bilinear form $V$ contains an almost unimodular $G$-stable lattice $\Lambda$,
\item the class of $[\Lambda^\vee/\Lambda]$  in $W_G(k)$ only depends on the class of $V$ in $W_G(K)$,
\item the map $\partial\colon W^b_G(K) \to W_G(k)$ given by $[V] \mapsto [\Lambda^\vee/\Lambda]$ is a homomorphism,
\item a $G$-bilinear form $V$ contains a  $G$-stable unimodular  $\cO_K$-lattice if and only if $V$ is bounded and $\partial [V]= 0$ in $W_G(k)$.
\end{enumerate}
\end{bigthm}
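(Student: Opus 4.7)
For part (i), I would start with any $G$-stable $\cO_K$-lattice $\Lambda_0 \subset V$ (available by boundedness) and replace it with the $G$-stable integral lattice $\Lambda_0 \cap \Lambda_0^\vee$. The $G$-stable integral lattices $\Lambda$ lying between $\Lambda_0$ and $\Lambda_0^\vee$ form a finite set, hence admit a maximal element, and I would show that such a maximal $\Lambda$ is automatically almost unimodular. Indeed, if $c$ denotes the $\cO_K$-exponent of $N := \Lambda^\vee/\Lambda$ and $c \geq 2$, then $\pi^{\lceil c/2\rceil} N$ is a nonzero $G$-stable submodule, and the inclusion $\pi^c \Lambda^\vee \subset \Lambda$ forces $b(\Lambda^\vee, \Lambda^\vee) \subset \pi^{-c}\cO_K$; consequently the preimage $\Lambda + \pi^{\lceil c/2\rceil}\Lambda^\vee$ in $\Lambda^\vee$ is still integral and strictly contains $\Lambda$, contradicting maximality.

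For part (ii), the key observation is that when $\Lambda \subset \Lambda'$ are both almost unimodular $G$-stable lattices in $V$, the subspace $L := \Lambda'/\Lambda$ of the $k$-vector space $\Lambda^\vee/\Lambda$ is $G$-stable and isotropic, with $L^\perp/L \cong \Lambda'^\vee/\Lambda'$; the equivariant Witt-group identity $[N] = [L^\perp/L]$ developed in \S~\ref{sec:equivariant-witt} then yields $[\Lambda^\vee/\Lambda] = [\Lambda'^\vee/\Lambda']$ in $W_G(k)$. For two arbitrary almost unimodular $G$-stable lattices I would connect them by a finite zigzag of such inclusions, built by iteratively invoking (i) applied to their intersection and sum. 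The remaining ingredient for (ii) is the vanishing of $\partial$ on bounded neutral forms, which I would handle by showing that for $V$ bounded with $G$-Lagrangian $X$, a suitably chosen almost unimodular $\Lambda$ has $(X \cap \Lambda^\vee + \Lambda)/\Lambda$ as a $G$-Lagrangian of $\Lambda^\vee/\Lambda$. Part (iii) is then formal from the orthogonal-sum identity $(\Lambda \perp \Lambda')^\vee/(\Lambda \perp \Lambda') \cong \Lambda^\vee/\Lambda \perp \Lambda'^\vee/\Lambda'$.

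For part (iv), the ``only if'' direction is immediate since a unimodular $\Lambda$ is almost unimodular with $\Lambda^\vee/\Lambda = 0$. For the converse, suppose $\partial[V] = 0$ and fix any almost unimodular $G$-stable $\Lambda \subset V$; by (ii) we have $[\Lambda^\vee/\Lambda] = 0$ in $W_G(k)$. The problem reduces to upgrading this equality to the existence of an actual $G$-stable Lagrangian $M \subset \Lambda^\vee/\Lambda$, for then the preimage of $M$ in $\Lambda^\vee$ is a $G$-stable unimodular lattice in $V$. This upgrade is the main obstacle: the defining relation ``$[X] = 0$ in $W_G(k)$'' gives only stable neutrality, i.e.~$X \perp N_1 \cong N_2$ for some neutral $N_1, N_2$, and equivariant Witt cancellation is not automatic. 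I would address this either by establishing the necessary cancellation result for $W_G(k)$ from the structural machinery of \S~\ref{sec:equivariant-witt}, or, more constructively, by exploiting the freedom in choosing $\Lambda$: modify $\Lambda$ within its class of almost unimodular $G$-stable lattices, using the neutralizing summand $N_1$ as a template, so that the resulting $\Lambda^\vee/\Lambda$ acquires an explicit $G$-Lagrangian.
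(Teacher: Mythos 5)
Parts (i) and (iii) of your proposal essentially match the paper's argument (the maximal-integral-lattice trick in (i) is the same computation the paper performs on the discriminant torsion form), and for (iv) you correctly isolate the crux: one must upgrade ``$[\Lambda^\vee/\Lambda]=0$ in $W_G(k)$'' to the existence of an actual $G$-stable lagrangian, and equivariant Witt cancellation is not available. This is exactly Proposition \ref{prop:neutral-iff-zero}, which the paper proves not by cancellation but by a direct dimension count showing that if $V\oplus M$ is neutral with $M$ neutral then $V$ itself has a lagrangian; so your first suggested route is the one that works, and it is already supplied by \S~\ref{sec:equivariant-witt}.

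The genuine gap is in part (ii), in the passage from comparable almost unimodular lattices to arbitrary ones. Your zigzag ``built by iteratively invoking (i) applied to their intersection and sum'' does not go through: the sum $\Lambda_0+\Lambda_1$ of two almost unimodular $G$-lattices need not be integral (nothing forces $b(\Lambda_0,\Lambda_1)\subset\cO_K$), the intersection $\Lambda_0\cap\Lambda_1$ is integral but need not be almost unimodular, and enlarging it to a maximal integral $G$-lattice destroys comparability with $\Lambda_0$ and $\Lambda_1$. The existence of a connecting chain of pairwise comparable almost unimodular $G$-lattices is true, but the paper only obtains it via the CAT(0) geometry of the space of reflexive valuations, and explicitly calls this independence ``the hardest part'' there. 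The main proof avoids the chain altogether by introducing the Witt group $WT^{\epsilon}_A(\cO_K)$ of torsion forms: the discriminant form of \emph{any} integral (not necessarily almost unimodular) $G$-lattice defines a class there, a single isotropic reduction applied to $\Lambda_0\cap\Lambda_1$ proves independence (Lemma \ref{lemma:independence-of-lattice}), and the d\'evissage isomorphism $W_G(k)\longisomto WT_{\cO_K[G]}(\cO_K)$ of Proposition \ref{prop:residue-field-to-torsion} transports the conclusion back to $W_G(k)$. This intermediate group is the idea your proposal is missing. (A minor slip in (i): when $k$ is infinite the integral $G$-lattices between $\Lambda_0\cap\Lambda_0^\vee$ and its dual form a poset of finite length rather than a finite set, but maximal elements still exist.)
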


As in our treatment of the equivariant Witt groups, we show this in a slightly more general context, see Theorem \ref{thm:B}. This theorem can be seen as an equivariant version of a theorem of Springer \cite{Springer55} (but see Remark \ref{rmk:Springer-fails}), or of the theory of discriminant forms of Nikulin \cite{Nikulin79}. The result is without doubt well-known to the experts, but for lack of proper reference we include a complete proof in sections \ref{sec:equivariant-witt} and \ref{sec:reduction}. See also  \cite{Serre17},  \cite{Thompson86},  and \cite{Stoltzfus77} where similar ideas are used.

 In case the residue field $k$ is finite and $G$ is the infinite cyclic group, then the group $W_G(k)$ and the  map $\partial$ can be made reasonably explicit. We use this in the proof of Theorem \ref{bigthm:even-unimodular-charpol} to show (using the notation of \S~\ref{intro:strategy}) that there is no local obstruction to finding a $\lambda \in E_0^\times$ and a unimodular  $\Lambda \subset (E,b_\lambda)$, stable under $\alpha$. See Propositions \ref{prop:at-p} and \ref{prop:local-unimodular-existence-at-2}.

If $K=\bQ_2$, then we use the spinor norm to refine the theorem into a necessary and sufficient condition for a $G$-bilinear form to contain a $G$-invariant \emph{even} unimodular $\bZ_2$-lattice; see Theorem \ref{thm:evenness}, and its application in Proposition \ref{prop:at-2}.

The full strength of Theorem \ref{bigthm:reduction} is not needed for the application in Theorem \ref{bigthm:even-unimodular-charpol}, and one could make do with a few ad hoc computations in the spirit of the proof of Theorem \ref{bigthm:reduction}. However, we have included it since it helps keeping the argument organized, and since we believe Theorem \ref{bigthm:reduction} may be a useful tool in working with `$G$-lattices', playing a role similar to Nikulin's theory for lattices without group action.

\subsection*{Acknowledgements}We want to thank Asher Auel, Simon Brandhorst, Curt McMullen, Jean-Pierre Serre and Xun Yu for enlightening discussions and comments on an earlier draft. The second author is supported by a grant of the Netherlands Organisation for Scientific Research (NWO).

\section{Conventions}

The \emph{determinant} of a non-degenerate symmetric bilinear form is $\det b \in K^\times/(K^\times)^2$ is the determinant of any Gram matrix of $b$. The \emph{discriminant} $\disc b\in K^\times/(K^\times)^2$ is defined as
\[
	\disc b := (-1)^{\frac{n(n-1)}{2}} \det b,
\]
where $n$ is the dimension of $V$. The discriminant of a hyperbolic form is trivial.

\section{The equivariant Witt group}\label{sec:equivariant-witt}

Let $K$ be a field and $A$ an (associative, unital) $K$-algebra. Let $\sigma\colon A\to A$ be an involution. We will refer to the algebra with involution $(A,\sigma)$ simply by $A$. Fix an $\epsilon$ in $\{\pm 1\}$.

\begin{definition}An \emph{$(A,\epsilon)$-bilinear form} is a pair $(V,b)$ consisting of an $A$-module $V$ of finite $K$-dimension and a non-degenerate $K$-bilinear form $b\colon V\times V\to K$ satisfying
\begin{enumerate}
\item $b(ax,y) = b(x,\sigma(a)y)$ for all $a\in A$ and $x,y\in V$,
\item $b(x,y) = \epsilon b(y,x)$ for all $x,y\in V$.
\end{enumerate}
\end{definition}
We will sometimes suppress $b$ and and denote an $(A,\epsilon)$-bilinear form by $V$.

\begin{example}In the proof of Theorem \ref{bigthm:even-unimodular-charpol} only pairs $(A,\epsilon)$ of the following kind will be considered. Let $G$ be a group, $A = K[G]$, and $\sigma$ the involution of $A$ satisfying $\sigma(g)=g^{-1}$ for all $g\in G$. In this case, an $(A,+1)$-bilinear form is the same as a non-degenerate symmetric bilinear form $b\colon V\times V \to K$ together with an action $\rho\colon G\to \rmO(V,b)$. We will also refer to such a triple $(V,b,\rho)$  as a \emph{$G$-bilinear form}.
\end{example}

\begin{definition}Let $V$ be an $(A,\epsilon)$-bilinear form. If $X\subset V$ is a sub-$A$-module, then its \emph{orthogonal} $X^\perp = \{ x\in V \mid b(x,V) = 0 \}$ is also a sub-$A$-module. A \emph{lagrangian} is a sub-$A$-module $X\subset V$ satisfying $X=X^\perp$. We say that $V$ is \emph{neutral} if it has a lagrangian. 
The group  $W_A^\epsilon(K)$ is defined as the quotient of the Grothendieck group of $(A,\epsilon)$-bilinear forms by the subgroup generated by the neutral forms.
\end{definition}

If $A=K[G]$ and $\epsilon=1$ then we will write $W_G(K)$ for $W_A^\epsilon(K)$.
 
\begin{remark}
If $A=K$ and $\epsilon=1$ then $W_A^\epsilon(K)$ coincides with the usual Witt group of bilinear forms $W(K)$. Indeed, if  $X\subset V$ satisfies $X = X^\perp$ then 
 $V$ is isomorphic to $X \oplus X^\vee$  equipped with the obvious bilinear form, and $V$ is hyperbolic  (or metabolic in characteristic $2$).
\end{remark}

\begin{remark} \label{rmk:no-dual-lagrangian}
 The  decomposition $V\cong X \oplus X^\vee$ in the preceding remark is not canonical, and in general, if $X$ is a lagrangrian in an $(A,\epsilon)$-bilinear form $V$ then the natural short exact sequence of $A$-modules
 \[
 	0 \longto X \longto V \longto X^\vee \longto 0
\]
need not split.
\end{remark}

 If $V=(V,b)$ is an $(A,\epsilon)$-bilinear form, then we denote by $V(-1)$ the scaled $(A,\epsilon)$-bilinear form $(V,-b)$.

\begin{lemma}$V\oplus V(-1)$ is neutral.
\end{lemma}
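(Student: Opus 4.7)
The plan is to exhibit an explicit lagrangian in $V\oplus V(-1)$, namely the diagonal
\[
\Delta = \{ (x,x) \mid x \in V \} \subset V \oplus V(-1).
\]
First I would check that $\Delta$ is an $A$-submodule: this is immediate since the $A$-action on $V\oplus V(-1)$ is diagonal and agrees on both components. Next I would verify that $\Delta$ is isotropic. Writing $b_\oplus$ for the form on $V\oplus V(-1)$, one has
\[
b_\oplus\bigl((x,x),(y,y)\bigr) = b(x,y) + (-b)(x,y) = 0
\]
for all $x,y\in V$, so $\Delta \subset \Delta^\perp$.

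The only substantive step is the reverse inclusion $\Delta^\perp \subset \Delta$. Suppose $(x,y) \in \Delta^\perp$. Then for every $z \in V$,
\[
0 = b_\oplus\bigl((x,y),(z,z)\bigr) = b(x,z) - b(y,z) = b(x-y,z).
\]
Non-degeneracy of $b$ forces $x=y$, so $(x,y) \in \Delta$. Hence $\Delta = \Delta^\perp$ and $V\oplus V(-1)$ is neutral.

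I do not expect any obstacle here. The argument is formal and works uniformly in $\epsilon$ and for arbitrary algebras with involution; nothing is needed beyond non-degeneracy of $b$ and the $A$-linearity of the diagonal embedding. The subtlety flagged in Remark \ref{rmk:no-dual-lagrangian} about non-splitting of $0 \to X \to V \to X^\vee \to 0$ plays no role, since the definition of \emph{neutral} only requires the existence of a lagrangian, not a splitting.
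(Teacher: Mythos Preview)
Your proof is correct and takes exactly the same approach as the paper: the diagonal $\{(x,x)\mid x\in V\}$ is the lagrangian. You have simply filled in the details that the paper leaves to the reader.
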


\begin{proof}The submodule $\{ (x,x) \mid x \in V\}$ is a lagrangian.
\end{proof}

\begin{corollary}
Every element of $W_A^\epsilon(K)$ is of the form $[V]$ for some $V$. \qed
\end{corollary}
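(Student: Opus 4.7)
The plan is to unfold the definition of $W_A^\epsilon(K)$ and use the preceding lemma to rewrite an arbitrary difference of classes as a single class. By construction, every element of $W_A^\epsilon(K)$ is represented by a formal difference $[V_1]-[V_2]$ of classes of $(A,\epsilon)$-bilinear forms, since the Grothendieck group of $(A,\epsilon)$-bilinear forms has this description under direct sum.

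Given such a difference, I would form $V := V_1 \oplus V_2(-1)$ and compute
\[
[V] = [V_1] + [V_2(-1)] = [V_1] - [V_2] + \bigl([V_2] + [V_2(-1)]\bigr) = [V_1] - [V_2] + [V_2 \oplus V_2(-1)].
\]
By the preceding lemma, $V_2 \oplus V_2(-1)$ is neutral, so its class vanishes in $W_A^\epsilon(K)$. Hence $[V] = [V_1]-[V_2]$ in $W_A^\epsilon(K)$, which is exactly what is required.

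There is essentially no obstacle: the only subtle point is recognizing that the identity $[V_2(-1)] = -[V_2]$ in the Witt group is precisely the content of the lemma, and that direct sum of $(A,\epsilon)$-bilinear forms is well-defined and corresponds to addition of classes. Both are immediate from the definitions given in the section.
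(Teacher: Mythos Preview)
Your proposal is correct and is precisely the standard argument the paper has in mind: the corollary is marked \qed{} because it follows immediately from the preceding lemma by writing an arbitrary element as $[V_1]-[V_2]$ and observing $[V_1]-[V_2]=[V_1\oplus V_2(-1)]$. There is nothing to add.
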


\begin{lemma}\label{lemma:totally-isotropic-simplification}
If $X\subset V$ is a sub-$A$-module and satisfies $X\subset X^\perp$, then 
$[V]=[X^\perp/X]$ in $W_G(K)$.
\end{lemma}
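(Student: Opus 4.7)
The plan is to reduce the claim to showing that $V \oplus (X^\perp/X)(-1)$ admits a lagrangian. Granting this, the preceding lemma applied to $V' := X^\perp/X$ gives $[V'(-1)] = -[V']$ in $W_A^\epsilon(K)$, so the desired identity $[V]=[X^\perp/X]$ follows at once. As a preliminary, one checks that the form $b$ descends to a non-degenerate $\epsilon$-symmetric $A$-bilinear form $\bar b$ on $V'$: well-definedness uses the assumption $X\subset X^\perp$ (so $X$ pairs trivially with $X^\perp$), and non-degeneracy of $\bar b$ follows from the standard identity $(X^\perp)^\perp = X$, which holds because $V$ is finite-dimensional and $b$ is non-degenerate.

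The natural candidate for a lagrangian in $V \oplus V'(-1)$ is the graph-style sub-$A$-module
\[
L \;=\; \{\,(x,\bar x) \mid x\in X^\perp\,\} \;\subset\; V \oplus V'(-1),
\]
which is an $A$-submodule because $X^\perp$ is. That $L$ is totally isotropic is immediate from the definition of $\bar b$: the pairing on $V\oplus V'(-1)$ sends $((x_1,\bar x_1),(x_2,\bar x_2))$ to $b(x_1,x_2)-\bar b(\bar x_1,\bar x_2)=0$. To establish $L^\perp\subset L$, I would take $(v,w)\in L^\perp$ and first test it against elements $(x,0)\in L$ with $x\in X$, which forces $v\in X^\perp$. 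Then, choosing any lift $\tilde w\in X^\perp$ of $w$, the orthogonality condition against all of $L$ becomes $b(v-\tilde w, x)=0$ for every $x\in X^\perp$, and applying $(X^\perp)^\perp=X$ yields $v-\tilde w \in X$. Hence $v\in X^\perp$ and $\bar v=w$, so $(v,w)=(v,\bar v)\in L$.

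I do not foresee a serious obstacle: the only technical input is the identity $(X^\perp)^\perp = X$, and the arbitrary choice of lift $\tilde w$ is harmless because the conclusion $v-\tilde w \in X$ is independent of it. The argument is essentially the $A$-equivariant version of the classical Witt device of replacing a form by the quotient of a totally isotropic submodule by itself; the key equivariance point is simply that $X^\perp$ is automatically a sub-$A$-module whenever $X$ is, so the lagrangian $L$ lives in the category of $(A,\epsilon)$-bilinear forms.
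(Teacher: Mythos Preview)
Your proposal is correct and follows essentially the same approach as the paper: the paper's proof consists of the single sentence ``The submodule $\{(x,\bar x)\mid x\in X^\perp\}$ is a lagrangian in $V(-1)\oplus X^\perp/X$,'' which is exactly your graph construction (up to an irrelevant global sign). Your write-up simply supplies the verifications the paper leaves to the reader.
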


\begin{proof}
The submodule $\{ (x,\bar x) \mid x \in X^\perp \}$ is a lagrangian in $V(-1) \oplus X^\perp/X$.
\end{proof}

\begin{proposition}\label{prop:neutral-iff-zero}
$[V]=0$ in $W_A^\epsilon(K)$ if and only if $V$ is neutral.
\end{proposition}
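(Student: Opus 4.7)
The forward direction, that $V$ neutral implies $[V]=0$, is immediate from the construction of $W_A^\epsilon(K)$; the content is in the converse, which I would prove in two steps.

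The first step is to upgrade the relation $[V]=0$ in $W_A^\epsilon(K)$ to an honest isomorphism of forms. Since neutral $(A,\epsilon)$-bilinear forms are closed under direct sum, the subgroup of the Grothendieck group they generate consists of differences $[N]-[N']$ with $N, N'$ neutral; hence the vanishing of $[V]$ produces neutral $N, N'$ and an auxiliary form $T$ with $V\oplus N'\oplus T\cong N\oplus T$. Adding $T(-1)$ to both sides and invoking the preceding lemma (so that $T\oplus T(-1)$ is neutral), one can absorb $T$ into the neutral summands to obtain an isomorphism $V\oplus M\cong M'$ with both $M$ and $M'$ neutral. In particular $W:=V\oplus M$ is itself neutral, so transport of a lagrangian of $M'$ across the isomorphism yields a lagrangian $X\subset W$.

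The second step is a Witt-style linear algebra argument that extracts a lagrangian of $V$ from $X$. Fix a lagrangian $L\subset M$ and regard it as a totally isotropic sub-$A$-module of $W$; then $L^\perp=V\oplus L$, and the induced form on $L^\perp/L$ is canonically the form on $V$. I claim that the image $\bar X$ of $X\cap L^\perp$ under the quotient $L^\perp\to L^\perp/L\cong V$ is a lagrangian of $V$. It is an $A$-submodule (being the image of one) and totally isotropic because $X$ is. Maximality reduces to a $K$-dimension count: using $X=X^\perp$ and the non-degeneracy of $W$ one finds $(X+L^\perp)^\perp=X\cap L$, which forces $\dim_K(X\cap L^\perp)-\dim_K(X\cap L)=\tfrac12\dim_K V$ and hence $\bar X=\bar X^\perp$. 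This last dimension count is the only step with genuine content; the rest is formal, and the module-theoretic arguments go through because intersections, sums, quotients, and orthogonals of $A$-submodules remain $A$-submodules. The main thing to watch is simply that one works with actual $A$-stable subspaces throughout rather than only passing to classes in the Witt group (as Lemma \ref{lemma:totally-isotropic-simplification} would give), since the goal is an honest lagrangian in $V$.
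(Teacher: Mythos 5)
Your proposal is correct and follows essentially the same route as the paper: reduce to an isomorphism $V\oplus M\cong M'$ with $M,M'$ neutral, and then extract a lagrangian of $V$ from a lagrangian $X$ of $V\oplus M$ by intersecting with $V\oplus L$ (the paper's $V\times Y$) and projecting to $V$; your submodule $\bar X$ is literally the paper's $Z=\pi_V(S)$, and your dimension count via $(X+L^\perp)^\perp=X\cap L$ is the same computation, phrased through the subquotient $L^\perp/L\cong V$. No issues.
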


\begin{proof}If $V$ is neutral then $[V]=0$ by definition of $W_A^\epsilon(K)$. 

Conversely, assume that $[V]=0$. Then there exists a $W$ and a neutral $N$ such that $V\oplus W \cong N \oplus W$. Adding a summand $W(-1)$ we find that there exists a neutral $M$ such that
$V \oplus M$ is neutral.

Let $2n$ be the dimension of $V$, and $2m$ the dimension of $M$.
Let $X\subset V\oplus M$, and $Y\subset M$ be lagrangians.  Consider the submodule
\[
	S := X \cap (V\times Y) \subset V \oplus M.
\]
We have  $X+(V\times Y) \subset (X\cap Y)^\perp$, so that we find
\[
	\dim X + \dim (V\times Y)  \leq \dim S + \dim (X\cap Y)^\perp.
\]
Expressing everything in terms of $n$, $m$, $\dim S$ and $\dim (X\cap Y)$ gives
\[
	(n+m) + (2n+m) \leq \dim S + (2n+2m-\dim (X\cap Y)),
\]
and hence $\dim S - \dim(X\cap Y) \geq n$.
Now the subspace $Z:= \pi_V(S) \subset V$ is a totally isotropic sub-$A$-module. The kernel of the projection
$S\to Z$ is $X\cap Y$, so we have
\[
	\dim Z = \dim S - \dim (X\cap Y) \geq n,
\]
and therefore $Z$ is a lagrangian. 
\end{proof}

The following proposition is analogous to the diagonalizability of quadratic forms over fields.

\begin{proposition}\label{prop:simple-generators}
The group $W_A^\epsilon(K)$ is generated by the classes of $(A,\epsilon)$-bilinear forms $(V,b)$ 
with $V$ a simple $A$-module.
\end{proposition}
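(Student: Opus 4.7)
The plan is to prove this by strong induction on $\dim_K V$. The base case $\dim_K V = 0$ is trivial, since the empty sum is an element of the subgroup generated by simple $(A,\epsilon)$-bilinear forms. For the inductive step, fix $(V,b)$ of positive dimension. Since $V$ has finite $K$-dimension it is in particular an artinian $A$-module, so we may choose a nonzero simple sub-$A$-module $X \subset V$. The strategy is to split off $X$, or to pass to a smaller quotient, and then invoke the inductive hypothesis.

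First I would verify that $X^\perp$ is itself a sub-$A$-module: if $y \in X^\perp$ and $a \in A$, then for any $x \in X$ we have $b(x, ay) = b(\sigma(a)x, y) = 0$ since $\sigma(a)x \in X$. Now consider the restriction of $b$ to $X$. There are two cases, controlled by whether $X \cap X^\perp$ is zero. If $X \cap X^\perp = 0$, then $b|_X$ is non-degenerate; combined with the fact that $b$ is non-degenerate on $V$, this forces $V = X \oplus X^\perp$ as an orthogonal direct sum of $(A,\epsilon)$-bilinear forms. Hence $[V] = [X] + [X^\perp]$, where $X$ is simple and $\dim_K X^\perp < \dim_K V$, so the inductive hypothesis applied to $X^\perp$ finishes this case.

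In the other case, $X \cap X^\perp$ is a nonzero sub-$A$-module of the simple module $X$, so $X \cap X^\perp = X$, i.e.\ $X \subset X^\perp$. Then Lemma \ref{lemma:totally-isotropic-simplification} gives $[V] = [X^\perp/X]$ in $W_A^\epsilon(K)$. Since $X \neq 0$ we have $\dim_K X^\perp/X < \dim_K V$, and the inductive hypothesis applied to $X^\perp/X$ expresses $[V]$ as a sum of classes of simple $(A,\epsilon)$-bilinear forms.

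I do not expect any serious obstacle: the argument is a standard orthogonal-sum-or-quotient induction, parallel to the diagonalization proof for quadratic forms. The only point that requires a bit of care is the verification that $X^\perp$ is an $A$-submodule (using that $\sigma$ is an involution of $A$ and that $b$ is $\sigma$-sesquilinear in the sense of the definition) and that the splitting $V = X \oplus X^\perp$ in the non-degenerate case is indeed orthogonal for the $A$-action, both of which follow immediately from the axioms.
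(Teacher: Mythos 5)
Your proof is correct and follows essentially the same route as the paper: an induction (the paper phrases it as a minimal counterexample) in which one either splits off an orthogonal summand when $X\cap X^\perp=0$ or reduces to $X^\perp/X$ via Lemma \ref{lemma:totally-isotropic-simplification} when $X$ is totally isotropic. The only cosmetic difference is that you start from a simple submodule $X$, whereas the paper takes an arbitrary proper submodule $W$ and sets $X=W\cap W^\perp$; both variants work.
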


\begin{proof}
Let $W^{\mathrm{ss}} \subset W_A^\epsilon(K)$ be the subgroup generated by the classes of forms on simple $A$-modules. Let $[V]$ be an element of the complement $ W_A^\epsilon(K) \setminus W^{\mathrm{ss}}$ with $\dim_K V$ minimal. 

Since $V$ is not simple, it contains a proper submodule $W$. Consider the submodule  $X = W \cap W^\perp$ of $V$. Either $X=\{0\}$ and then $[V]=[W] + [W^\perp]$, or $X\neq \{0\}$ and then  $[V]=[X^\perp/X]$. In both cases we obtain a contradiction with the minimality of $\dim V$.
\end{proof}

\begin{corollary}
Every class in  $W_A^\epsilon(K)$ is represented by an $(A,\epsilon)$-bilinear form whose underlying $A$-module is semi-simple.\qed 
\end{corollary}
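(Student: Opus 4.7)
The corollary is a direct consequence of Proposition \ref{prop:simple-generators} combined with the earlier lemma that $V \oplus V(-1)$ is neutral. My plan is to rewrite an arbitrary class as a $\bZ$-linear combination of classes of simple forms, convert negative coefficients into positive ones using $-[V]=[V(-1)]$, and then recognize the resulting expression as the class of a single direct sum whose underlying $A$-module is a direct sum of simples.

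More precisely, let $x\in W_A^\epsilon(K)$. By Proposition \ref{prop:simple-generators}, we can write $x = \sum_i n_i [V_i]$ with each $V_i$ an $(A,\epsilon)$-bilinear form whose underlying $A$-module is simple and with $n_i\in\bZ$. Using that $[V_i]+[V_i(-1)]=0$, we have $-[V_i]=[V_i(-1)]$; since $V_i(-1)$ has the same underlying $A$-module as $V_i$, it too is an $(A,\epsilon)$-bilinear form on a simple module. Replacing each term $n_i [V_i]$ with $n_i<0$ by $|n_i|[V_i(-1)]$, we may assume $x=\sum_j m_j [W_j]$ with $m_j\geq 0$ and each $W_j$ an $(A,\epsilon)$-bilinear form on a simple $A$-module.

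Finally, the direct sum of $(A,\epsilon)$-bilinear forms represents the sum of their classes in $W_A^\epsilon(K)$, so $x = \bigl[\bigoplus_j W_j^{\oplus m_j}\bigr]$. The underlying $A$-module $\bigoplus_j W_j^{\oplus m_j}$ is a direct sum of simple $A$-modules, hence semi-simple. There is no real obstacle here; the content of the statement is already contained in Proposition \ref{prop:simple-generators}, and the only additional observation needed is that taking $V(-1)$ preserves the underlying module (so preserves semi-simplicity), allowing us to pass from a \emph{generating set} statement to a \emph{representability} statement.
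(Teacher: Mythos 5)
Your argument is correct and is precisely the reasoning the paper leaves implicit (the corollary is stated with no proof, as an immediate consequence of Proposition \ref{prop:simple-generators} together with the fact that $-[V]=[V(-1)]$ and that $V(-1)$ has the same underlying module). Nothing to add.
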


If $M$ is a simple $A$-module, then we denote by 
$W_A^\epsilon(K,M)$ the subgroup of $W_A^\epsilon(K)$ generated by the classes of $(A,\epsilon)$-bilinear forms $(M,b)$.

\begin{theorem}\label{thm:isotypical-decomposition}
$W_A^\epsilon(K) = \oplus_M W_A^\epsilon(K,M)$ where $M$ ranges over the isomorphism classes of simple $A$-modules.
\end{theorem}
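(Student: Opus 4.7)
The strategy is to show that the natural map
\[
\Phi\colon \bigoplus_M W_A^\epsilon(K, M) \longto W_A^\epsilon(K),
\]
where $M$ ranges over isomorphism classes of simple $A$-modules, is an isomorphism.

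Surjectivity is immediate from Proposition \ref{prop:simple-generators}: every class in $W_A^\epsilon(K)$ is a $\bZ$-linear combination of classes of forms on simple modules, each of which lies in $W_A^\epsilon(K, M)$ for the appropriate $M$.

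For injectivity, suppose $\sum_M x_M = 0$ with $x_M \in W_A^\epsilon(K, M)$. Using the identity $-[V] = [V(-1)]$, I would first represent each $x_M$ by the class of a single $(A,\epsilon)$-bilinear form $W_M$ whose underlying $A$-module is a direct sum of copies of $M$. Then the orthogonal direct sum $W := \bigoplus_M W_M$ has zero class in $W_A^\epsilon(K)$, and by Proposition \ref{prop:neutral-iff-zero} it admits a lagrangian $L$. The goal is to show that $L \cap W_M$ is a lagrangian of $W_M$ for every $M$, which will force $[W_M] = 0$ and hence $x_M = 0$.

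The main observation is that the isotypical decomposition of the semisimple $A$-module $L$ is compatible with that of $W$: any simple submodule of $L$ isomorphic to $M$ must land in the $M$-isotypical component $W_M$ of $W$, so $L = \bigoplus_M L_M$ with $L_M := L \cap W_M$. The orthogonality $W_M \perp W_{M'}$ for $M\neq M'$ (automatic from the orthogonal direct sum structure of $W$) then gives $L^\perp \cap W_M = L_M^{\perp_{W_M}}$; combined with $L = L^\perp$ this yields $L_M = L_M^{\perp_{W_M}}$, so each $L_M$ is indeed a lagrangian of $W_M$. The step requiring the most care is this identification of $L^\perp\cap W_M$ with $L_M^{\perp_{W_M}}$, but it boils down to a direct unraveling of definitions using the orthogonality of the isotypical components.
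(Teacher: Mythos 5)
Your proof is correct and follows essentially the same route as the paper: surjectivity from Proposition \ref{prop:simple-generators}, then for injectivity one takes a lagrangian $L$ of the orthogonal sum (via Proposition \ref{prop:neutral-iff-zero}), decomposes it along the isotypical components using $\Hom_A(M,M')=0$ for non-isomorphic simples, and checks each piece is a lagrangian. The paper's version is terser, but your extra care in representing each class by a single isotypical form and in identifying $L^\perp\cap W_M$ with $L_M^{\perp_{W_M}}$ fills in exactly the steps the paper leaves implicit.
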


\begin{proof}
By Proposition \ref{prop:simple-generators} the map $\oplus_M W_A^\epsilon(K,M) \to W_A^\epsilon(K)$
is surjective. Let $([V_M])_M$ be an element of the kernel. Then by Proposition \ref{prop:neutral-iff-zero} there is a lagrangian $X \subset \oplus_M V_M$, and since $\Hom_A(M_1,M_2) =  0$ whenever $M_1\not\cong M_2$ we have that $X$ decomposes as $X=\oplus_M X_M$. For every $M$ the submodule $X_M \subset V_M$ is a lagrangian, and we conclude that the map is injective.
\end{proof}

\section{Equivariant forms over discrete valuation rings}\label{sec:reduction}

\subsection{Statement of the result}

Throughout this section, $\cO_K$ is a discrete valuation ring with field of fractions $K$, residue field $k$, and uniformizer $\pi$. Let $(A,\sigma)$ be an $\cO_K$-algebra with involution and write $A_K$ and $A_k$ for $A\otimes_{\cO_K} K$ and $A\otimes_{\cO_K} k$ respectively. To lighten notation, we will write $W_A^\epsilon(K)$ for $W_{A_K}^\epsilon(K)$ and similarly $W_A^\epsilon(k)$ for $W_{A_k}^\epsilon(k)$.

We will be particularly interested in the case $A=\cO_K[G]$ with $\epsilon=1$, in which case we have $W_A^\epsilon(K)=W_G(K)$ and $W_A^\epsilon(k)=W_G(k)$.

\begin{definition}
 An $A$-\emph{lattice} in an $(A_K,\epsilon)$-bilinear form $V$ is a sub-$A$-module $\Lambda$ which is finitely generated as an $\cO_K$-module and satisfies $K\Lambda = V$. If $\Lambda$ is an $A$-lattice, then so is its dual
 \[
	\Lambda^\vee := \{ x\in V \mid b(x,\Lambda) \subset \cO_K \}.
\]
We say that $\Lambda$ is is \emph{almost unimodular} if $\pi\Lambda^\vee \subset \Lambda \subset \Lambda^\vee$, and that it is \emph{unimodular} if $\Lambda = \Lambda^\vee$.
\end{definition}

If $\Lambda$ is almost unimodular, then $\Lambda^\vee/\Lambda$ equipped with the pairing
\[
	\Lambda^\vee/\Lambda \times \Lambda^\vee/\Lambda
	\overset{b}\longto \frac{ \pi^{-1}\cO_K }{\cO_K} \overset{\pi} \longto k
\]
is an $(A_k,\epsilon)$-bilinear form.

\begin{definition}
We say that an $(A_K,\epsilon)$-bilinear form is \emph{bounded} if it contains an $A$-lattice. We denote by $W_{A}^{\epsilon,b}(K)$ the subgroup of $W_{A}^\epsilon(K)$ generated by the classes of bounded forms.
\end{definition}

In this section we prove the following theorem, which for $A=\cO_K[G]$ and $\epsilon=1$ coincides with Theorem \ref{bigthm:reduction}. 

\begin{theorem}\label{thm:B}$~$
\begin{enumerate}
\item Every bounded $(A_K,\epsilon)$-bilinear form contains an almost unimodular $A$-lattice $\Lambda$,
\item the class of $\Lambda^\vee/\Lambda$ in $W_{A}^\epsilon(k)$ only depends on the class of $V$ in $W_{A}^\epsilon(K)$,
\item the map $\partial\colon W_{A}^{\epsilon,b}(K) \to W_{A}^\epsilon(k)$ given by $[V] \mapsto \Lambda^\vee/\Lambda$ is a homomorphism,
\item $V$ contains a unimodular $A$-lattice if and only if $V$ is bounded and $\partial[V]=0$ in $W_{A}^\epsilon(k)$.
\end{enumerate}
\end{theorem}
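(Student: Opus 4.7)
My plan is to handle part (1) first (existence), then the heart of the matter---part (2)---via a single \emph{descent} lemma that simultaneously forces well-definedness, and finally (3) and (4) as easy consequences.

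For (1), I would start with any $A$-lattice $\Lambda_0$ (which exists since $V$ is bounded), replace it by $\Lambda_0 \cap \Lambda_0^\vee$ to assume $\Lambda_0 \subset \Lambda_0^\vee$, and let $N$ be minimal with $\pi^N \Lambda_0^\vee \subset \Lambda_0$. If $N \leq 1$ we are done; otherwise I iterate the operation $\Lambda \mapsto \Lambda + \pi^{N-1}\Lambda^\vee$. A direct valuation calculation shows that this produces an integral $A$-lattice whose minimal $N$ has dropped by one, and iteration terminates at $N = 1$.

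The bulk of the work lies in the following descent lemma, which drives (2) and (3): if $X \subset V$ is a lagrangian and $\Lambda \subset V$ is an almost unimodular $A$-lattice, then the image $\bar X$ of $X \cap \Lambda^\vee$ in $\Lambda^\vee/\Lambda$ is a lagrangian of the reduction. Isotropy of $\bar X$ is immediate, so the real content is the dimension equality $\dim_k \bar X = \tfrac12 \dim_k \Lambda^\vee/\Lambda$. I would establish this by considering the projection $\phi \colon V \twoheadrightarrow V/X \cong X^*$ and observing that $X \cap \Lambda^\vee$ and $\phi(\Lambda)$ are dual $\cO_K$-lattices in $X$ and $X^*$ (and similarly for $X \cap \Lambda$ and $\phi(\Lambda^\vee)$); combined with the additivity of $\cO_K$-lengths in the short exact sequences $0 \to X \cap \Lambda^\vee \to \Lambda^\vee \to \phi(\Lambda^\vee) \to 0$ and its analogue for $\Lambda$, this forces $\dim_k \bar X$ to be exactly half of $\dim_k \Lambda^\vee/\Lambda$.

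Given the lemma, I would deduce well-definedness of $\partial$ on $W_A^{\epsilon,b}(K)$ via the standard device of passing to $V \oplus V'(-1)$: if $[V] = [V']$ with $V, V'$ both bounded, then $V \oplus V'(-1)$ is neutral by Proposition~\ref{prop:neutral-iff-zero}, and $\Lambda \oplus \Lambda'$ is almost unimodular in $V \oplus V'(-1)$ with reduction $[\Lambda^\vee/\Lambda] - [\Lambda'^\vee/\Lambda']$, which the lemma forces to be neutral. Additivity of $\partial$---and hence (3)---is immediate from $(\Lambda_1 \oplus \Lambda_2)^\vee/(\Lambda_1 \oplus \Lambda_2) = \Lambda_1^\vee/\Lambda_1 \oplus \Lambda_2^\vee/\Lambda_2$. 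For (4), the forward implication is clear from $\Lambda = \Lambda^\vee$, and for the converse I would pick any almost unimodular $\Lambda$, use Proposition~\ref{prop:neutral-iff-zero} to obtain a lagrangian $L \subset \Lambda^\vee/\Lambda$, and verify that its preimage $\tilde L \subset \Lambda^\vee$ satisfies $\tilde L^\vee = \tilde L$ using $L = L^\perp$.

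The main obstacle will be the dimension count in the descent lemma; the trap to avoid is trying to compare two arbitrary almost unimodular lattices in the same $V$ directly, since they need not be related by inclusion. Passing to $V \oplus V'(-1)$ lets one reduce all of (2) to the neutral case, which is exactly where the descent lemma applies.
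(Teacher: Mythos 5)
Your proposal is correct, and its skeleton matches the paper's: produce an almost unimodular lattice by successive modification, prove a ``lagrangian descent'' lemma (a lagrangian $X\subset V$ induces a lagrangian in $\Lambda^\vee/\Lambda$), use the doubling trick $V\oplus V'(-1)$ together with Proposition~\ref{prop:neutral-iff-zero} to get independence of choices, and lift a lagrangian of $\Lambda^\vee/\Lambda$ to a unimodular lattice for (iv). The genuine difference is that you bypass the Witt group of torsion forms entirely. The paper compares two integral lattices $\Lambda_0,\Lambda_1$ by intersecting them, which produces a discriminant form \emph{not} killed by $\pi$; this forces it to introduce $WT_A^\epsilon(\cO_K)$ and prove the d\'evissage isomorphism $W_A^\epsilon(k)\isomto WT_A^\epsilon(\cO_K)$ (Proposition~\ref{prop:residue-field-to-torsion}) before descending to $W_A^\epsilon(k)$. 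You instead compare two \emph{almost unimodular} lattices only through the neutral form $V\oplus V'(-1)$, so every discriminant form you meet is already a $k$-vector space; this is shorter and more self-contained, at the cost of losing the reusable torsion-form formalism (the equivariant analogue of Nikulin's discriminant forms, which the paper's Lemma~\ref{lemma:independence-of-lattice} exploits). Your proof of the descent lemma is also different: a length count via the dual pair of lattices $X\cap\Lambda^\vee=(\phi(\Lambda))^\ast$ and $X\cap\Lambda=(\phi(\Lambda^\vee))^\ast$ in $X\cong(V/X)^\ast$, giving $\dim_k\bar X=\tfrac12\dim_k\Lambda^\vee/\Lambda$ and hence $\bar X=\bar X^\perp$, versus the paper's direct computation of $U^\perp=U$ from the identities $(L+M)^\vee=L^\vee\cap M^\vee$ of Lemma~\ref{lemma:dual-prop}; both are sound (note only that your count needs $(\Lambda^\vee)^\vee=\Lambda$ and the exactness of $0\to (X\cap\Lambda^\vee)/(X\cap\Lambda)\to\Lambda^\vee/\Lambda\to\phi(\Lambda^\vee)/\phi(\Lambda)\to 0$, which holds since $X\cap\Lambda=(X\cap\Lambda^\vee)\cap\Lambda$). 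Finally, your iteration $\Lambda\mapsto\Lambda+\pi^{N-1}\Lambda^\vee$ in (1) is exactly the operation hidden inside the paper's proof that maximal totally isotropic submodules of a torsion form satisfy $\pi U^\perp\subset U$, just phrased at the level of lattices rather than of $\Lambda^\vee/\Lambda$.
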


We follow closely the proof of \cite[\S~6.1]{Scharlau85}, which treats the case $A=\cO_K$, $\epsilon=1$.

\subsection{Torsion forms and proof of Theorem \ref{thm:B}}

\begin{definition}An \emph{$(A,\epsilon)$-bilinear torsion form} $M$ is a pair $(M,b)$ consisting of 
\begin{enumerate}
\item an $A$-module $M$ of finite finite length over $\cO_K$,
\item a  non-degenerate $\cO_K$-bilinear map $b\colon M\times M \to K/\cO_K$ satisfying $b(x,y)=\epsilon b(y,x)$ and $b(ax,y) = b (x,\sigma(a)y)$ for all $x,y\in M$ and $a\in A$.
\end{enumerate}
A \emph{lagrangian} in $M$ is a sub-$A$-module $X\subset M$ with $X=X^\perp$. We say that $M$ is \emph{neutral} if $M$ has a lagrangian.
\end{definition}

If $V$ is an $(A_K,\epsilon)$-bilinear form and $\Lambda \subset V$ an $A$-lattice satisfying $\Lambda \subset \Lambda^\vee$, then $M := \Lambda^\vee/\Lambda$ is naturally an $(A,\epsilon)$-bilinear torsion form (sometimes called the discriminant form of $\Lambda$). The  $A$-lattices $\Lambda'$ satisfying
\[
	\Lambda \subset \Lambda'\subset \Lambda'^\vee \subset \Lambda^\vee
\]
are in bijection with the totally isotropic sub-$A$-modules $M'\subset M$,  and $\Lambda'$ is almost unimodular (resp.~unimodular) if and only $\pi M'^\perp \subset M'$ (resp.~$M'$ is a lagrangian).

\begin{proposition}
Let $M$ be an $(A,\epsilon)$-bilinear torsion form. Then every maximal  totally isotropic sub-$A$-module $U\subset M$ satisfies $\pi U^\perp \subset U$.
\end{proposition}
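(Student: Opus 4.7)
The plan is to work with the quotient $N := U^\perp/U$. The pairing on $M$ induces an $(A,\epsilon)$-bilinear torsion form on $N$; this induced form is non-degenerate because $(U^\perp)^\perp = U$, which is a consequence of the non-degeneracy of $b$ on $M$ (the map $M \to \Hom_{\cO_K}(M,K/\cO_K)$ being an isomorphism, an argument of the same flavour as for vector-space-valued forms). Moreover, any sub-$A$-module $V \subset N$ with $V \subset V^\perp$ lifts to a sub-$A$-module $U' \subset U^\perp \subset M$ with $U \subset U' \subset U'^\perp$, so by maximality $U' = U$ and $V = 0$. Thus $N$ contains no nonzero totally isotropic sub-$A$-module. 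Since $\pi N = 0$ is equivalent to $\pi U^\perp \subset U$, it suffices to prove that any non-degenerate $(A,\epsilon)$-bilinear torsion form with no nonzero totally isotropic sub-$A$-module is killed by $\pi$.

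To prove this reduction, let $n \geq 0$ be the $\cO_K$-exponent of $N$, i.e.~the least integer with $\pi^n N = 0$. If $n \leq 1$ we are done. I would argue by contradiction: assume $n \geq 2$ and look at the sub-$A$-module $\pi^{n-1}N \subset N$. It is nonzero by the choice of $n$, and it is stable under $A$ because multiplication by $\pi^{n-1}$ is $\cO_K$-linear and commutes with the $A$-action. For $u = \pi^{n-1}u'$ and $v = \pi^{n-1}v'$ in $\pi^{n-1}N$ one has
\[
    b(u,v) \;=\; \pi^{2n-2}\,b(u',v') \;\in\; \pi^{2n-2}\cdot\bigl(\pi^{-n}\cO_K/\cO_K\bigr) \;=\; \pi^{n-2}\cO_K/\cO_K,
\]
where the containment $b(u',v') \in \pi^{-n}\cO_K/\cO_K$ comes from $\pi^n\cdot b(u',v') = b(\pi^n u',v') = 0$, which in turn uses that $N$ has exponent $\pi^n$. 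For $n \geq 2$ the target group vanishes, so $\pi^{n-1}N$ is a nonzero totally isotropic sub-$A$-module of $N$, contradicting the standing hypothesis.

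The substantive obstacle is not the contradiction step, which is a direct calculation, but rather the preliminary verification that the induced pairing on $U^\perp/U$ really is non-degenerate and that maximality of $U$ correctly translates into the absence of nonzero isotropic sub-$A$-modules of $N$. Both facts are standard for torsion forms but deserve a careful line of justification in the write-up; once they are in place the exponent argument dispatches the proposition.
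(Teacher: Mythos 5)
Your proof is correct and is essentially the paper's argument in quotient form: the paper takes $t$ maximal with $\pi^{t-1}U^\perp\not\subset U$ and shows $\pi^{t-1}U^\perp+U$ is totally isotropic via $b(\pi^{t-1}x,\pi^{t-1}x')=b(\pi^t x,\pi^{t-2}x')=0$, which is exactly your computation with $\pi^{n-1}N\subset N=U^\perp/U$ pulled back to $M$. (The non-degeneracy of the induced form on $N$, which you flag as the delicate point, is in fact never used --- only the lifting of totally isotropic submodules matters --- so that part of the write-up can be dropped.)
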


\begin{proof}
Let $U \subset M$ be a maximal  totally isotropic submodule. Let $t$ be the largest integer such that $\pi^{t-1} U^\perp/U\neq 0$. Assume $t\geq 2$ and set $V:=\pi^{t-1}U^\perp + U$. Then $V$ is a sub-$A$-module, and we claim that it is totally isotropic. Indeed, if $x\in U^\perp$ and $y\in U$ then
\[
	b(\pi^{t-1} x+ y , \pi^{t-1}x+y) = b(\pi^t x, \pi^{t-2} x) = 0,
\]
since $\pi^t x \in U$. This is a contradiction with the maximality of $U$.
\end{proof}

\begin{corollary}
Let $V$ be an $(A_K,\epsilon)$-bilinear form. Then any maximal $A$-lattice in $V$ is almost unimodular.\qed
\end{corollary}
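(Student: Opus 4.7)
The plan is to deduce the corollary from the preceding proposition by applying it to the discriminant form, with the trivial totally isotropic submodule.

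First I interpret ``maximal $A$-lattice'' to mean maximal among those $A$-lattices satisfying $\Lambda\subset\Lambda^\vee$; such a maximal object exists whenever $V$ is bounded, since starting from any $A$-lattice $\Lambda_0$ one has $\pi^N\Lambda_0\subset(\pi^N\Lambda_0)^\vee=\pi^{-N}\Lambda_0^\vee$ for $N$ large enough, and the integral $A$-lattices containing a given integral $\Lambda$ are all contained in $\Lambda^\vee$, which is a finitely generated $\cO_K$-module, so they form a (finite) poset satisfying the ascending chain condition.

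Now let $\Lambda$ be such a maximal integral $A$-lattice and form the torsion $(A,\epsilon)$-bilinear form $M:=\Lambda^\vee/\Lambda$. By the bijection recalled just before the proposition, the integral $A$-lattices $\Lambda'$ with $\Lambda\subset\Lambda'\subset\Lambda'^\vee\subset\Lambda^\vee$ correspond precisely to the totally isotropic sub-$A$-modules of $M$. Maximality of $\Lambda$ therefore forces the only totally isotropic sub-$A$-module of $M$ to be $0$; in particular, $U=0$ is itself a (vacuously) maximal totally isotropic submodule of $M$.

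Applying the preceding proposition to this $U$ gives $\pi\cdot 0^\perp\subset 0$, that is, $\pi M=0$. Unwinding the definition of $M$, this says $\pi\Lambda^\vee\subset\Lambda$, and combined with $\Lambda\subset\Lambda^\vee$ this is exactly the condition that $\Lambda$ be almost unimodular. The whole argument is formal given the lattice--submodule dictionary and the proposition, so no step should present a genuine obstacle.
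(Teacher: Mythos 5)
Your proof is correct and is exactly the argument the paper intends: the corollary is deduced from the preceding proposition via the stated dictionary between integral lattices between $\Lambda$ and $\Lambda^\vee$ and totally isotropic sub-$A$-modules of $\Lambda^\vee/\Lambda$, applied with $U=0$. The remark on the existence of maximal integral lattices is not needed for this statement (it belongs to the next corollary) but is harmless and correct.
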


\begin{corollary}\label{cor:almost-unimodular-lattices-exist}
Let $V$ be a bounded $(A,\epsilon)$-bilinear form over $K$. Then $V$ has an almost unimodular $A$-lattice.\qed
\end{corollary}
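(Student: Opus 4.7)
My plan is to deduce the corollary directly from the preceding proposition on maximal totally isotropic submodules of torsion forms. First, since $V$ is bounded I fix an $A$-lattice $\Lambda_0 \subset V$. Both $\Lambda_0$ and its dual $\Lambda_0^\vee$ are $A$-lattices, so for $n$ sufficiently large one has $\pi^{2n}\Lambda_0 \subset \Lambda_0^\vee$. Replacing $\Lambda_0$ by $\pi^n\Lambda_0$ (which is still an $A$-lattice, $A$-stability being preserved since scalar multiplication by $\pi^n$ is central), I may therefore arrange that $\Lambda_0 \subset \Lambda_0^\vee$. The quotient $M := \Lambda_0^\vee/\Lambda_0$ is then an $A$-module of finite $\cO_K$-length and inherits the natural structure of an $(A,\epsilon)$-bilinear torsion form described in the paragraph preceding the proposition.

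Since $M$ has bounded $\cO_K$-length, the poset of totally isotropic sub-$A$-modules of $M$ satisfies the ascending chain condition, so a maximal element $U$ exists. The preceding proposition yields $\pi U^\perp \subset U$. Now I invoke the correspondence, recalled just before the proposition, between totally isotropic sub-$A$-modules of $M$ and $A$-lattices $\Lambda'$ with $\Lambda_0 \subset \Lambda' \subset \Lambda'^\vee \subset \Lambda_0^\vee$: the preimage $\Lambda'$ of $U$ in $\Lambda_0^\vee$ is an $A$-lattice satisfying $\pi \Lambda'^\vee \subset \Lambda' \subset \Lambda'^\vee$, i.e., $\Lambda'$ is almost unimodular.

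I do not anticipate any real obstacle: the technical core of the argument — showing that maximality of $U$ forces $\pi U^\perp \subset U$ — has already been discharged in the proof of the proposition above, and the corollary is essentially a formal packaging of that result together with the elementary rescaling step that produces an integral starting lattice. (Alternatively, once one knows that a maximal integral $A$-lattice exists inside $\Lambda_0^\vee$, the statement also follows from the immediately preceding corollary.)
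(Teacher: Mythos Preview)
Your proof is correct and is essentially the argument the paper has in mind: the paper states the corollary with a bare \qed, deducing it from the preceding corollary (maximal integral $A$-lattices are almost unimodular) together with the existence of a maximal integral $A$-lattice, which in turn rests on the same proposition and correspondence you invoke. You have simply unwound this chain one step further, applying the proposition directly rather than citing the intermediate corollary; the content is the same.
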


We denote by $WT_A^\epsilon(\cO_K)$ the Witt group of $(A,\epsilon)$-bilinear torsion forms, defined as the quotient of the Grothendieck group of $(A,\epsilon)$-bilinear torsion forms by the subgroup generated by the classes of the neutral forms. 

If $V$ is an $A_k$-bilinear form, then the composition
\[
	V\times V \longto k \overset{\pi^{-1}}{\longto} \frac{K}{\cO_K}
\]
makes $V$ into an $(A,\epsilon)$-bilinear torsion form over $\cO_K$. This defines a homomorphism $W_A^\epsilon(k) \to WT_A^\epsilon(\cO_K)$.

\begin{proposition}\label{prop:residue-field-to-torsion}
The map $W_A^\epsilon(k) \to WT_A^\epsilon(\cO_K)$ is an isomorphism.
\end{proposition}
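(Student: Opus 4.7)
The plan is to show the map is both injective and surjective using a torsion-form analogue of Lemma \ref{lemma:totally-isotropic-simplification}, whose proof is identical to the field-form version: if $X\subset M$ satisfies $X\subset X^\perp$, then $\{(x,\bar x) : x\in X^\perp\}$ is a lagrangian in $M(-1)\oplus X^\perp/X$, and so $[M] = [X^\perp/X]$ in $WT_A^\epsilon(\cO_K)$. Well-definedness of the map $W_A^\epsilon(k)\to WT_A^\epsilon(\cO_K)$ is clear: a lagrangian $X$ in a $k$-form remains a lagrangian after composing the pairing with the embedding $\pi^{-1}\colon k \isomto \pi^{-1}\cO_K/\cO_K$.

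For injectivity, I would observe that for an $(A_k,\epsilon)$-bilinear form $(V,b_k)$ and any sub-$A$-module $X\subset V$, the orthogonal $X^\perp$ is identical whether computed in the $k$-form or in the associated torsion form. Indeed, lifting $b_k(x,y)\in k$ to $\tilde b\in \cO_K$, the class $\pi^{-1}\tilde b\in K/\cO_K$ vanishes iff $\tilde b\in \pi\cO_K$ iff $b_k(x,y)=0$. Hence any lagrangian for the torsion structure is automatically a lagrangian for the original $k$-form, so $V$ is neutral over $k$ as soon as it is neutral as a torsion form.

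For surjectivity, I would induct on the $\cO_K$-length of a torsion form $M$. Let $n\geq 1$ be minimal with $\pi^n M=0$. If $n=1$, then $M$ is a $k$-module and the pairing factors through $\pi^{-1}\cO_K/\cO_K$; multiplication by $\pi$ identifies the target with $k$, exhibiting $M$ as the image of an $(A_k,\epsilon)$-bilinear form over $k$. If $n\geq 2$, then $\pi^{n-1}M$ is nonzero (by minimality of $n$) and totally isotropic: since $\pi^n b(x,y) = b(\pi^n x,y)=0$, we have $b(x,y)\in \pi^{-n}\cO_K/\cO_K$, and therefore
\[
b(\pi^{n-1}x,\pi^{n-1}y) = \pi^{2n-2}b(x,y)\in \pi^{n-2}\cO_K/\cO_K = 0 \text{ in } K/\cO_K,
\]
the final equality using $n-2\geq 0$. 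The torsion-form analogue of Lemma \ref{lemma:totally-isotropic-simplification} then gives $[M] = [(\pi^{n-1}M)^\perp/\pi^{n-1}M]$, and non-degeneracy of the pairing forces this quotient to have strictly smaller $\cO_K$-length than $M$, completing the induction.

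The only genuinely arithmetic content is the estimate $2(n-1)\geq n$ for $n\geq 2$, which is precisely what drives the reduction from annihilator exponent $n$ to something smaller; the remaining work is formal manipulation of orthogonals and lagrangians, in complete parallel to the analogous reductions performed over a field in \S\ref{sec:equivariant-witt}.
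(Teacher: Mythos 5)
Your surjectivity argument is essentially the paper's: the paper also reduces a torsion form $M$ of exponent $t\geq 2$ by passing to $U^\perp/U$ with $U=\pi^{t-1}M$, which is exactly your reduction step, and both rest on the torsion analogue of Lemma \ref{lemma:totally-isotropic-simplification}. The problem is injectivity. What you actually prove is that if the torsion form associated to a $k$-form $V$ is \emph{neutral}, then $V$ is neutral over $k$. But injectivity of the induced map on Witt groups requires the stronger statement that if the \emph{class} of $V$ vanishes in $WT_A^\epsilon(\cO_K)$ then $V$ is neutral, and for torsion forms you have nowhere established the analogue of Proposition \ref{prop:neutral-iff-zero} (vanishing Witt class implies neutral). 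That analogue is true, and its proof does carry over from the field case once one replaces $K$-dimension by $\cO_K$-length and verifies that $\mathrm{length}(X^\perp)=\mathrm{length}(M)-\mathrm{length}(X)$ for a sub-$A$-module $X$ of a non-degenerate torsion form (this uses that $K/\cO_K$ is an injective cogenerator of the category of finite-length $\cO_K$-modules), but it is a genuine missing step rather than a formality.

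The paper avoids the issue by turning your reduction into an explicit two-sided inverse: iterating the passage $M\mapsto U^\perp/U$ produces a canonical form $M^\dagger$ of exponent one, and one checks that $(M\oplus N)^\dagger=M^\dagger\oplus N^\dagger$ and that if $X$ is a lagrangian of $M$ then $(X\cap U^\perp)/(X\cap U)$ is a lagrangian of $U^\perp/U$, via the identity $(X\cap U^\perp)^\perp\cap U^\perp=(X+U)\cap U^\perp=(X\cap U^\perp)+U$. Hence $M\mapsto M^\dagger$ descends to a homomorphism $WT_A^\epsilon(\cO_K)\to W_A^\epsilon(k)$ that is inverse to the given map, and injectivity comes for free. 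To complete your proof you should either supply the torsion version of Proposition \ref{prop:neutral-iff-zero} with the length bookkeeping above, or add the neutrality-preservation check and conclude as the paper does.
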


\begin{proof}For $A=\cO_K=\bZ_p$, this is \cite[5.1.5]{Scharlau85}.  The argument carries over to our setting, we repeat it for the convenience of the reader. 

For a non-zero finite length $\cO_K$-module $M$, we call the smallest integer $t$ such that $\pi^t M = 0 $ but $\pi^{t-1}M \neq 0$ the  
 \emph{exponent} of $M$. 
 
Let $M$ be a $(A,\epsilon)$-bilinear torsion form of exponent $t\geq 2$. Then $U:=\pi^{t-1} M$ is totally isotropic, and the $(A,\epsilon)$-bilinear form $M' := U^\perp/U$ has exponent $<t$ and we have $[M]=[M']$ in $WT_A(\cO_K)$.  Repeating this process, we find a canonical $(A,\epsilon)$-bilinear torsion form $M^\dagger$ of exponent $1$ (\emph{i.e.} killed by $\pi$). 

Note that $(M\oplus N)^\dagger=M^\dagger \oplus N^\dagger$. Also, if $M$ is neutral with lagrangian $X\subset M$ then
\[
	X' := \frac{X\cap U^\perp}{X\cap U} \subset \frac{U^\perp}{U} = M'
\]
is lagrangian, since 
\[
	(X\cap U^\perp)^\perp \cap U^\perp = (X+U) \cap U^\perp = (X\cap U^\perp) + U
\]
and hence $X'$ is its own orthogonal in $U^\perp/U$. 
This shows that $M \mapsto M^\dagger$ induces a two-sided inverse to $W_A^\epsilon(k) \to WT_A(\cO_K)$.
\end{proof}

\begin{lemma}\label{lemma:independence-of-lattice}
If $\Lambda_0$ and $\Lambda_1$ are  $A$-lattices in $V$ satisfying $\Lambda_0\subset \Lambda_0^\vee$ and $\Lambda_1\subset \Lambda_1^\vee$, then
$[\Lambda_0^\vee/\Lambda_0] =[\Lambda_1^\vee/\Lambda_1]$ in $WT_A^\epsilon(\cO_K)$. 
\end{lemma}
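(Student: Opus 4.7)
The plan is to reduce to the case of nested lattices by intersection, and then invoke a torsion-form analogue of Lemma \ref{lemma:totally-isotropic-simplification}. The latter is free of charge: the lagrangian $\{(x,\bar{x}) \mid x \in X^\perp\} \subset M(-1) \oplus X^\perp/X$ produced in the proof of Lemma \ref{lemma:totally-isotropic-simplification} makes sense verbatim for $(A,\epsilon)$-bilinear torsion forms, and $M(-1) \oplus M$ is neutral in $WT_A^\epsilon(\cO_K)$ via the diagonal. Hence if $X \subset M$ is a sub-$A$-module with $X \subset X^\perp$, then $[M]=[X^\perp/X]$ in $WT_A^\epsilon(\cO_K)$; I will use this freely.

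First I would replace the pair $(\Lambda_0,\Lambda_1)$ by a single lattice contained in both. Set $\Lambda := \Lambda_0 \cap \Lambda_1$. As a sub-$A$-module of the Noetherian $\cO_K$-module $\Lambda_0$ it is finitely generated over $\cO_K$, and since $K\Lambda_0 = K\Lambda_1 = V$ there is an $N$ with $\pi^N\Lambda_0 \subset \Lambda_1$, hence $\pi^N\Lambda_0 \subset \Lambda$ and $K\Lambda = V$; thus $\Lambda$ is an $A$-lattice. Moreover $\Lambda \subset \Lambda_0 \subset \Lambda_0^\vee$ gives $b(\Lambda,\Lambda) \subset \cO_K$, i.e.~$\Lambda \subset \Lambda^\vee$. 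Applying the nested case (to be proved next) twice, to $\Lambda \subset \Lambda_0$ and to $\Lambda \subset \Lambda_1$, yields
\[
[\Lambda_0^\vee/\Lambda_0] \;=\; [\Lambda^\vee/\Lambda] \;=\; [\Lambda_1^\vee/\Lambda_1].
\]

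It then remains to handle the case $\Lambda \subset \Lambda'$ with both lattices contained in their own duals. Dualising gives $\Lambda'^\vee \subset \Lambda^\vee$, and the chain $\Lambda \subset \Lambda' \subset \Lambda'^\vee \subset \Lambda^\vee$ shows that $\Lambda'/\Lambda$ is a sub-$A$-module of the torsion form $\Lambda^\vee/\Lambda$. Since $\Lambda' \subset \Lambda'^\vee$, the pairing restricts trivially on $\Lambda'/\Lambda$, so $\Lambda'/\Lambda$ is totally isotropic. Unpacking definitions, its orthogonal inside $\Lambda^\vee/\Lambda$ consists of the classes of $x \in \Lambda^\vee$ with $b(x,\Lambda') \subset \cO_K$, i.e.~of $\Lambda'^\vee/\Lambda$, and the induced form on the subquotient $(\Lambda'/\Lambda)^\perp/(\Lambda'/\Lambda)$ is exactly $\Lambda'^\vee/\Lambda'$ as an $(A,\epsilon)$-bilinear torsion form. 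The torsion-form version of Lemma \ref{lemma:totally-isotropic-simplification} then yields $[\Lambda^\vee/\Lambda] = [\Lambda'^\vee/\Lambda']$, completing the proof.

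There is no serious obstacle here: the only piece of bookkeeping requiring care is the identification of the orthogonal of $\Lambda'/\Lambda$ inside $\Lambda^\vee/\Lambda$ with $\Lambda'^\vee/\Lambda$ as a bilinear form, which is a direct unwinding of the definitions of the pairings on $\Lambda^\vee/\Lambda$ and $\Lambda'^\vee/\Lambda'$. This mirrors exactly the classical argument in \cite[\S~6.1]{Scharlau85} for $A=\cO_K$, $\epsilon=1$.
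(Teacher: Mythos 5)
Your proposal is correct and follows essentially the same route as the paper: pass to the intersection $\Lambda_0\cap\Lambda_1$ to reduce to nested lattices, exhibit $\Lambda'/\Lambda$ as a totally isotropic sub-$A$-module of $\Lambda^\vee/\Lambda$ with orthogonal $\Lambda'^\vee/\Lambda$, and conclude by the torsion-form analogue of Lemma \ref{lemma:totally-isotropic-simplification}. You simply spell out a few details the paper leaves implicit (that the intersection is again an $A$-lattice contained in its dual, and the identification of the orthogonal), so there is nothing to add.
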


\begin{proof}
Passing to the intersection, we may without loss of generality assume $\Lambda_0\subset \Lambda_1$. Let $M=\Lambda_0^\vee/\Lambda_1$. Then the image $U$ of $\Lambda_0$ in $M$ is totally isotropic, and $U^\perp/U \cong \Lambda_1^\vee/\Lambda_1$.
The same argument as in the proof of \ref{lemma:totally-isotropic-simplification} shows that $[M]=[U^\perp/U]$ and hence $[\Lambda_0^\vee/\Lambda_0] =[\Lambda_1^\vee/\Lambda_1]$ in $WT_A(\cO_K)$.
\end{proof}

Let $b\colon V\times V\to K$ be a non-degenerate bilinear form. For a sub-$\cO_K$-module $L$ of $V$ (not necessarily a lattice) we define
\[
	L^\vee := \{ x \in V \mid b(x,L) \subset \cO_K \}.
\]
The main properties of the dual of a lattice carry over to this generality:

\begin{lemma}\label{lemma:dual-prop}
For all sub-$\cO_K$-modules $L$, $M$ of a bilinear form $V$ we have 
\begin{enumerate} 
\item if $L$ is a sub-$K$-vector space, then $L^\vee = L^\perp$,
\item $(L^\vee)^\vee = L$,
\item $(L+M)^\vee = L^\vee \cap M^\vee$,
\item $(L \cap M)^\vee = L^\vee + M^\vee$.
\end{enumerate}
\end{lemma}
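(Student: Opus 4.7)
The plan is to prove the four parts in the order (i), (iii), (ii), (iv). Parts (i) and (iii) are direct observations; (iv) follows formally from (ii) and (iii); the substance lies in (ii).

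For (i), the inclusion $L^\perp \subset L^\vee$ is trivial. Conversely, if $L$ is a $K$-subspace and $x \in L^\vee$, then for each $y \in L$ and $c \in K$ we have $cy \in L$, so $c\, b(x,y) = b(x, cy) \in \cO_K$; since $\cO_K \neq K$, this forces $b(x, y) = 0$. For (iii), the containment $\subset$ is immediate from $L, M \subset L + M$, and $\supset$ follows by bilinearity: if $x$ pairs with both $L$ and $M$ into $\cO_K$, then $b(x, L+M) \subset \cO_K + \cO_K = \cO_K$.

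For (ii), the inclusion $L \subset (L^\vee)^\vee$ is immediate. For the reverse, given $x_0 \notin L$ I would construct $y \in L^\vee$ with $b(x_0, y) \notin \cO_K$, splitting into two cases. If $x_0 \notin KL$, set $W := KL$; by (i), $W^\perp = W^\vee \subset L^\vee$, and by non-degeneracy of $b$ we have $(W^\perp)^\perp = W$, so some $y_0 \in W^\perp$ pairs nontrivially with $x_0$, whence $y := \pi^{-N} y_0 \in L^\vee$ works for $N$ large. If $x_0 \in KL \setminus L$, I would first dispose of the finitely generated case: being torsion-free and finitely generated over the DVR $\cO_K$, $L$ is free, so one chooses an $\cO_K$-basis $l_1, \ldots, l_r$, extends it to a $K$-basis $l_1, \ldots, l_n$ of $V$, and produces the $b$-dual basis $l_i^*$ via non-degeneracy. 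A direct computation in the coordinates $\{l_i^*\}$ then yields
\[
L^\vee = \bigoplus_{i \le r} \cO_K l_i^* \;\oplus\; \bigoplus_{i > r} K l_i^*, \qquad (L^\vee)^\vee = L.
\]
For general $L$, having already handled the case $x_0 \notin KL$, we may assume $KL = V$; then $L$ contains a lattice, whence $L^\vee$ is sandwiched between two lattices and is therefore finitely generated by Noetherianity of $\cO_K$. A decomposition $L = W_L \oplus L_f$, where $W_L$ is the maximal $K$-subspace contained in $L$ and $L_f$ is a finitely generated complement in some linear complement of $W_L$, then reduces the double-dual computation to the finitely generated case via (i) applied to $W_L$.

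For (iv), applying (iii) with $L, M$ replaced by $L^\vee, M^\vee$ and invoking (ii), $(L^\vee + M^\vee)^\vee = (L^\vee)^\vee \cap (M^\vee)^\vee = L \cap M$. Applying $(-)^\vee$ once more and using (ii) on the sub-$\cO_K$-module $L^\vee + M^\vee$ yields $L^\vee + M^\vee = (L \cap M)^\vee$.

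The main obstacle will be the general case of part (ii): establishing the structural decomposition $L = W_L \oplus L_f$ for arbitrary sub-$\cO_K$-modules, which amounts to the fact that any sub-$\cO_K$-module of $V$ containing no nonzero $K$-subspace is necessarily bounded by a lattice.
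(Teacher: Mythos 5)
Your argument is correct and follows essentially the same route as the paper: both reduce part (ii) to the case of a free lattice by splitting $L$ into its maximal $K$-subspace $W_L$ and a finitely generated complement (the paper phrases this as the filtration $V_0\subset L\subset V_1$ with $L/V_0$ a lattice in $V_1/V_0$, which is the same decomposition), and both deduce (iv) formally from (ii) and (iii). The structural fact you flag as the main obstacle --- that a sub-$\cO_K$-module containing no nonzero $K$-subspace is bounded by a lattice --- is true and is exactly what the paper also asserts without proof.
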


\begin{proof}
For (i), note that $b(x,\lambda y) \in \cO_K$ for all $\lambda \in K$ implies $b(x,y)=0$.

The second assertion is clear if $L$ is a sub-$K$-vector space, or an $\cO_K$-lattice. For a general $L$, note that there exist unique sub-$K$-vector spaces $V_0$ and $V_1$ such that $V_0 \subset L \subset V_1$
and $L/V_0$ is a lattice in $V_1/V_0$. The  dual $L^\vee$ satisfies $V_1^\perp \subset L^\vee \subset V_0^\perp$, with $L^\vee/V_1^\perp$ the dual lattice of $L/V_0$ under the perfect pairing
\[
	V_1/V_0 \times V_0^\perp/V_1^\perp \to K
\]
induced by $b$. Consequently, the double dual of $L$ coincides with $L$.

The third assertion is immediate, and the final one follows from the third using the double dual statement in (ii).
\end{proof}

\begin{lemma}\label{lemma:vanish-on-neutrals}
If $V$ is a neutral $A_K$-bilinear form, and if $\Lambda \subset V$ is an $A$-lattice satisfying $\Lambda \subset \Lambda^\vee$, then $\Lambda^\vee/\Lambda$ is a neutral torsion $A$-bilinear form.
\end{lemma}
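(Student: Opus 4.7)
The plan is to construct a lagrangian in the torsion form $M := \Lambda^\vee/\Lambda$ directly from a given lagrangian $X \subset V$. Specifically, set $L := X \cap \Lambda^\vee$ and
\[
Y := \frac{L+\Lambda}{\Lambda} \subset M,
\]
which is a well-defined sub-$A$-module since $L, \Lambda \subset \Lambda^\vee$. The goal is to show that $Y = Y^\perp$ with respect to the $K/\cO_K$-valued pairing on $M$, whence $Y$ is a lagrangian and $M$ is neutral.

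First I would unwind the definition of $Y^\perp$. A class of $y \in \Lambda^\vee$ lies in $Y^\perp$ precisely when $b(y, L+\Lambda) \subset \cO_K$; since $b(y,\Lambda) \subset \cO_K$ is automatic, this is equivalent to $y \in L^\vee$. Thus $Y^\perp = (L^\vee \cap \Lambda^\vee)/\Lambda$, and the whole question reduces to computing $L^\vee$ and then intersecting with $\Lambda^\vee$.

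For the dual, I would appeal to Lemma \ref{lemma:dual-prop}: part (iv) gives $L^\vee = (X \cap \Lambda^\vee)^\vee = X^\vee + (\Lambda^\vee)^\vee$; part (i) together with $X^\perp = X$ gives $X^\vee = X$; and the double duality of part (ii) gives $(\Lambda^\vee)^\vee = \Lambda$. Therefore $L^\vee = X + \Lambda$. The remaining identity $(X+\Lambda) \cap \Lambda^\vee = (X \cap \Lambda^\vee) + \Lambda$ is a one-line modular-law verification: given $y = x + \lambda \in \Lambda^\vee$ with $x \in X$ and $\lambda \in \Lambda \subset \Lambda^\vee$, we have $x = y - \lambda \in \Lambda^\vee$, so $x \in X \cap \Lambda^\vee$. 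Assembling these identifications yields $Y^\perp = Y$.

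I do not expect a real obstacle here: all of the non-trivial duality manipulations have been packaged into Lemma \ref{lemma:dual-prop}. The only mild subtlety worth flagging is that $L = X \cap \Lambda^\vee$ is in general neither an $\cO_K$-lattice in $V$ nor a sub-$K$-vector space, so the general-position duality formulas of that lemma are genuinely needed rather than just the classical lattice-versus-lattice duality.
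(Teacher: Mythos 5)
Your proof is correct and is essentially the paper's own argument: both take the image of $X\cap\Lambda^\vee$ in $\Lambda^\vee/\Lambda$ as the candidate lagrangian and verify it is self-orthogonal via Lemma \ref{lemma:dual-prop} together with the modular-law identity $(X+\Lambda)\cap\Lambda^\vee=(X\cap\Lambda^\vee)+\Lambda$. Your write-up is merely a more explicit unwinding of the same computation.
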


\begin{proof}
Let $X\subset V$ be a lagrangian, and set
\[
	U := \frac{X\cap \Lambda^\vee}{X\cap \Lambda} \subset \frac{\Lambda^\vee}{\Lambda}.
\]
Then  using the Lemma \ref{lemma:dual-prop}, we see that $U^\perp \subset \Lambda^\vee/\Lambda$ satisfies
\[
	U^\perp = \frac{(X^\perp + \Lambda) \cap \Lambda^\vee}{(X^\perp + \Lambda) \cap \Lambda}
	= \frac{X\cap \Lambda^\vee}{X\cap \Lambda} = U,
\]
and we conclude that $U$ is a lagrangian in $\Lambda^\vee/\Lambda$.
\end{proof}

We are now ready to prove the main result of this section.

\begin{proof}[Proof of Theorem \ref{thm:B}]
Part (i) is Corollary \ref{cor:almost-unimodular-lattices-exist}. By Lemmas \ref{lemma:independence-of-lattice} and \ref{lemma:vanish-on-neutrals} the map $[V] \mapsto [\Lambda^\vee/\Lambda]$ is a well-defined homomorphism. Composing with the inverse of the isomorphism of Proposition \ref{prop:residue-field-to-torsion} we obtain (ii) and (iii). Assertion (iv) then follows from Proposition \ref{prop:neutral-iff-zero}.
\end{proof}

\begin{remark}\label{rmk:Springer-fails}
If  $V=(V,b)$ is an $(A,\epsilon)$-bilinear form over $K$, then so is $V(\pi) := (V,\pi b)$. We obtain a map
\[
	W_A^\epsilon(K) \to W_A^\epsilon(k) \times W_A^\epsilon(k) ,\, [V] \mapsto \Big(\partial[V], \partial[V(\pi)] \Big).
\]
If $K$ is complete, $A=\cO_K$ and $\epsilon=1$, then this is precisely the
isomorphism $W(K) \to W(k) \times W(k)$ of Springer \cite{Springer55}. 

In general, however, the map fails to be an isomorphism.  For example, if $G$ is a $p$-group and $K=\bQ_p$, then under the isotypical decomposition of Theorem \ref{thm:isotypical-decomposition} the group $W_G(\bQ_p)$  can have many non-trivial components (corresponding to the irreducible symmetrically  self-dual representations of $G$ over $\bQ_p$), whereas $W_G(\bF_p) = W(\bF_p)$.

For an explicit example, take $p=2$, and $G=\bZ/2\bZ$ acting on the hyperbolic plane $H$ over $\bQ_2$ by interchanging the two isotropic lines. Then $[H]$ is a non-zero element in the kernel of $W_G(\bQ_2) \to W_G(\bF_2)\times W_G(\bF_2)$.
\end{remark}

\begin{remark}\label{rmk:structure}
Let $M$ be a bounded simple $A_K$-module and $\Lambda\subset M$ an $A$-lattice. 
Let $\widebar M_1,\ldots \widebar M_n$ be the (distinct) simple $A_k$-modules that occur as quotient in a Jodan-H\"older filtration of $\Lambda/\pi\Lambda$. Then under the decomposition of Theorem \ref{thm:isotypical-decomposition} the map $\partial$ restricts to a map
\[
	W_A^\epsilon(K,M) \longto \bigoplus_{i=1}^n W_A^\epsilon(k,\widebar M_i).
\]
Moreover, one can use the theory of Morita equivalence of \cite[\S~I.9]{Knus91} to compute the groups $W_A^\epsilon(k,\widebar M_i)$ in terms of the usual Witt groups $W(k)$. Especially when $k$ is finite, this gives quite a bit of control over the map $\partial\colon W_A^\epsilon(K)\to W_A^\epsilon(k)$, and makes it plausible that Theorem \ref{thm:B} will find applications beyond its use in the proof of Theorem \ref{bigthm:even-unimodular-charpol}. 
\end{remark}

\subsection{Sketch of a CAT(0)-proof}Since  it may be of independent interest, we briefly sketch a different, more geometric proof of Theorem \ref{bigthm:reduction}, based on ideas of Goldman-Iwahori \cite{GoldmanIwahori63} and Bruhat-Tits \cite{BruhatTits84,BruhatTits87}. The argument does not seem to generalize to the more general setting of Theorem \ref{thm:B}.

Let $V$ be a finite-dimensional vector space over a discretely valued field $K$. A \emph{valuation} on $V$ is a map $\alpha\colon V \to \bR \cup \{\infty\}$ satisfying
\begin{enumerate}
\item $\alpha(x) = \infty$ if and only if $x=0$,
\item $\alpha(ax) = v(a) + \alpha(x)$ for all $a\in K$, $x\in V$,
\item $\alpha(x+y) \geq \inf \{ \alpha(x), \alpha(y) \}$ for all $x,y\in V$.
\end{enumerate}
If $\alpha$ is a valuation on $V$, then
\[
	\alpha^\vee(\xi) := \inf_{x\in V} \big( v(\xi(x)) - \alpha(x) \big)
\]
defines a valuation on $V^\vee$. If $b\colon V\times V\to K$ is a bilinear form, then we say that $\alpha$ is \emph{reflexive} if $\alpha = \alpha^\vee$ under the identification $V\cong V^\vee$ defined by $b$.

Let $\cB(V,b)$ be the set of all reflexive valuations on $(V,b)$. A \emph{metric} on $\cB(V,b)$ is given by
\[
	 d(\alpha,\beta) := \sup_{x\in V} | \beta(x) - \alpha(x) |.
\]
Using the results of \cite{GoldmanIwahori63}  and \cite[\S~1]{BruhatTits84} one can show that the metric space $\cB(V,b)$ is a CAT(0)-space. In particular, it is complete and uniquely geodesic. The group $\rmO(V,b)$ acts isometrically on $\cB(V,b)$, and if the residue characteristic is different from $2$, then one can identify $\cB(V,b)$ with the spherical Bruhat-Tits building of $\SO(V,b)$, see \cite[Thm.~2.12]{BruhatTits87}.

Let $\cL(V,b)$ be the set of almost unimodular lattices in $V$. We have $\rmO(V,b)$-equivariant maps
\[
	\cB(V,b) \to \cL(V,b), \, \alpha \mapsto \Lambda_\alpha := \{ x \in V \mid \alpha(x) \geq 0 \}
\]
and
\[
	\cL(V,b) \to \cB(V,b),\, \Lambda \mapsto \alpha_\Lambda := \big( x \mapsto \inf \{ v(\pi^n) \mid \pi^n x \in \Lambda \} \big).
\]
The first map is \emph{semi-continuous} is the following sense: every $\alpha \in \cB(V,b)$ has an open neighbourhood $U$ so that for all $\beta \in U$ we have $\Lambda_\beta  \subset \Lambda_\alpha$. The first map is a section of the second: for all almost unimodular lattices $\Lambda$ we have $\Lambda_{\alpha_\Lambda} = \Lambda$.

\begin{proof}[Sketch of alternative proof of Theorem \ref{bigthm:reduction}] We only give the proof of the hardest part: the independence of $\Lambda$ of $[\Lambda^\vee/\Lambda]\in W_G(k)$. Let $(V,b,\rho)$ be a $G$-bilinear form over $K$, and let $\Lambda_0$ and $\Lambda_1$ be almost unimodular $G$-lattices in $V$. If $\Lambda_0$ and  $\Lambda_1$ are comparable ($\Lambda_0\subset\Lambda_1$ or $\Lambda_1\subset \Lambda_0$), then it is easy to see that $[\Lambda_0^\vee/\Lambda_0]=[\Lambda_1^\vee/\Lambda_1]$ in $W_G(k)$. 

Otherwise, since $\cB(V,b)$ is CAT(0), there exists a unique geodesic
\[
	\alpha \colon [0,1] \to \cB(V,b),\, t \mapsto \alpha_t
\]
such that $\alpha_0 = \alpha_{\Lambda_0}$ and $\alpha_1= \alpha_{\Lambda_1}$. By unicity, we have that $\alpha_t$ is fixed by $G$ for all $t$, and hence that  $\Lambda_{\alpha_t}$ is an almost unimodular $G$-lattice for all $t$. By semi-continuity (and compactness of [0,1]), we find a finite sequence of almost unimodular $G$-lattices $(\Lambda_{t_i})_i$ with $\Lambda_{t_0} = \Lambda_0$, $\Lambda_{t_1} = \Lambda_1$, and with $\Lambda_{t_i}$ and $\Lambda_{t_{i+1}}$ comparable for all $i$.
\end{proof}

In similar spirit, the existence of almost unimodular $G$-lattices (Corollary \ref{cor:almost-unimodular-lattices-exist}) can be deduced from the Bruhat-Tits fixed point theorem on $\cB(V,b)$.

\section{One-dimensional hermitian forms and the twisting group \texorpdfstring{$\mu(E,\sigma)$}{\textbackslash mu(E,\textbackslash sigma)}}

The remainder of this paper is about $\bZ$-bilinear forms, that is, symmetric bilinear forms $(V,b)$ equipped with an action $\rho\colon \bZ \to \rmO(V,b)$ of the infinite cyclic group $\bZ$.

We beginning by recalling the  `hermitian' construction of $\bZ$-bilinear forms, and describe the local-global obstruction that will play a crucial role in the proof of Theorem \ref{bigthm:even-unimodular-charpol}. All material in this section is well-known.

\subsection{Hermitian construction of $\bZ$-bilinear forms}\label{sec:hermitian-construction}

Let $K$ be a field of characteristic different from $2$, let $E_0$ be an \'etale $K$-algebra, and let $E$ be an \'etale $E_0$-algebra that is free of rank $2$ over $E_0$. Let $\sigma$ denote the canonical involution of $E$, fixing $E_0$. Every $\lambda \in E_0^\times$ defines a symmetric bilinear form
\[
	b_\lambda\colon E\times E \to K,\, (x,y) \mapsto \tr_{E/K} ( \lambda x \sigma(y) )
\]
If $\alpha \in E^\times$ satisfies $\alpha \sigma(\alpha)=1$, then multiplication by $\alpha$ is an isometry of $(E,b_\lambda)$, and the homomorphism
\[
	\rho_\alpha\colon \bZ\to \rmO(E,b_\lambda),\, 1 \mapsto \alpha
\]
makes $(E,b_\lambda,\rho_\alpha)$ into a $\bZ$-bilinear form.

\subsection{The twisting group}\label{subsec:twisting-group}
Consider the group
\[
	\mu(E,\sigma) := \frac{E_0^\times}{ \{z\sigma(z) \mid z \in E^\times \}}.
\]
Up to $E$-linear isometry, the pair $(E,b_\lambda)$ only depends on the class of $\lambda$ in $\mu(E,\sigma)$. Note that $\mu(E,\sigma)$ is trivial if $E=E_0\times E_0$.

\begin{lemma} Let $T$ be the group scheme over $K$ defined by the short exact sequence
\[
	1 \longto T \longto \Res_{E/K} \bG_{m,E} \overset{\Nm}{\longto} \Res_{E_0/K} \bG_{m,E_0} \longto 1.
\] 
Then $\mu(E,\sigma) = \rH^1(K,T)$.
\end{lemma}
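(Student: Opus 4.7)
The plan is a direct application of the long exact sequence of Galois (equivalently, \'etale) cohomology associated to the given short exact sequence of $K$-group schemes. First I would write down the resulting six-term sequence
\[
1 \to T(K) \to (\Res_{E/K}\bG_{m,E})(K) \overset{\Nm}{\longto} (\Res_{E_0/K}\bG_{m,E_0})(K) \to \rH^1(K,T) \to \rH^1(K,\Res_{E/K}\bG_{m,E}).
\]
On $K$-points, $\Res_{E/K}\bG_{m,E}$ is $E^\times$ and $\Res_{E_0/K}\bG_{m,E_0}$ is $E_0^\times$, and the induced map is the norm $x \mapsto x\sigma(x)$ of $E/E_0$. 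So the image of the norm on $K$-points is precisely $\{z\sigma(z)\mid z\in E^\times\}$, i.e.~the subgroup by which we quotient to define $\mu(E,\sigma)$.

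The key step is then to verify that $\rH^1(K,\Res_{E/K}\bG_{m,E}) = 0$. Since $E$ is \'etale over $K$, it is a product of finite separable field extensions $E=\prod_i E_i$, and Shapiro's lemma gives
\[
\rH^1(K,\Res_{E/K}\bG_{m,E}) = \prod_i \rH^1(E_i,\bG_{m,E_i}),
\]
which vanishes by Hilbert 90. Feeding this back into the long exact sequence yields an isomorphism
\[
E_0^\times / \{z\sigma(z)\mid z\in E^\times\} \longisomto \rH^1(K,T),
\]
which is exactly the desired identification $\mu(E,\sigma)=\rH^1(K,T)$.

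There is essentially no obstacle: the only thing to be a little careful about is that $E$ (and possibly $E_0$) may fail to be a field, but the reduction via Shapiro's lemma handles this without modification. Everything else is a formal consequence of the definition of $\mu(E,\sigma)$.
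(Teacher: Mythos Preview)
Your proof is correct and matches the paper's approach exactly: the paper simply says the lemma follows from the long exact sequence of cohomology together with Hilbert 90, and you have spelled out precisely this argument (with the extra care of invoking Shapiro's lemma to handle the case where $E$ is only \'etale rather than a field).
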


\begin{proof}
This follows from the long exact sequence of cohomology together with Hilbert 90.
\end{proof}

\begin{remark}
The group scheme $T$ acts by isometries on the `standard' form
\[
	b_1\colon E\times E \to K,\, (x,y) \mapsto \tr_{E/K} (  x \sigma(y) ),
\]
and $b_\lambda$ is the twist of $b_1$ by the class in $\rH^1(K,T) = \mu(E,\sigma)$ determined by $\lambda$.
\end{remark}

We can express $\mu(E,\sigma)$ in terms of Brauer groups of $E$ and $E_0$:

\begin{lemma}\label{lemma:trivial-mu}\label{lemma:mu-to-brauer}
There is an exact sequence
\[
	1 \longto \mu(E,\sigma) \overset\beta\longto \Br E_0 \longto \Br E,
\]
where the map $\Br E_0 \to \Br E$ is the base change map.
\end{lemma}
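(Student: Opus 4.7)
The plan is to reduce to the case where $E_0$ is a field, then interpret $\mu(E,\sigma)$ as a Galois cohomology group sitting in an inflation-restriction sequence. Since $E_0$ is étale over $K$, I would first write $E_0 = \prod_i F_i$ as a product of fields; correspondingly $E = \prod_i L_i$, with $L_i := E \otimes_{E_0} F_i$ a rank $2$ étale $F_i$-algebra stable under $\sigma$. All four objects $\mu(E,\sigma)$, $\Br E_0$, $\Br E$, together with the norm map and the base-change map, decompose compatibly as products indexed by $i$, so it suffices to establish the exact sequence for each $L_i/F_i$.

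With $E_0$ now a field there are two cases. If $E \cong E_0 \times E_0$ with $\sigma$ swapping the factors, the norm $(a,b)\mapsto ab$ is already surjective onto $E_0^\times$, so $\mu(E,\sigma)=1$, while $\Br E_0 \to \Br E \cong \Br E_0 \times \Br E_0$ is the diagonal embedding; the sequence is trivially exact. If instead $E$ is a field, then $E/E_0$ is Galois with $G=\langle\sigma\rangle$ of order $2$. By definition
\[
	\mu(E,\sigma) = E_0^\times / N_{E/E_0}(E^\times) = \hat H^0(G, E^\times),
\]
and since $G$ is cyclic and $H^1(G,E^\times) = 0$ by Hilbert~90, Tate cohomology periodicity identifies this with $H^2(G, E^\times)$.

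The final step is the five-term inflation-restriction sequence for the tower $\bar E / E / E_0$ with coefficients $\bar E^\times$. Using Hilbert~90 once more to kill the relevant $H^1$-term, this produces
\[
	0 \to H^2(G, E^\times) \to H^2(\Gal(\bar E/E_0), \bar E^\times) \to H^2(\Gal(\bar E/E), \bar E^\times),
\]
and the restriction map on the right coincides with the base-change map $\Br E_0 \to \Br E$. Combined with the identification of the preceding paragraph, this is precisely the claimed exact sequence. I do not anticipate a serious obstacle: the argument is a concatenation of standard facts from Galois cohomology. The one thing that requires care is the initial reduction, checking that the product decomposition of $E_0$ respects both $\sigma$ and the norm map, so that all four terms of the sequence decompose compatibly.
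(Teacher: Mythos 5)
Your proof is correct, but it takes a genuinely different route from the paper's. The paper argues uniformly over the base field $K$: it applies the long exact cohomology sequence to the short exact sequence of $K$-group schemes $1 \to \Res_{E_0/K}\bG_{m} \to \Res_{E/K}\bG_{m} \to T \to 1$ (quotient map $z\mapsto \sigma(z)/z$), kills the $\rH^1$ terms by Hilbert 90 via Shapiro's lemma, and uses the identification $\mu(E,\sigma)\cong \rH^1(K,T)$ established in the preceding lemma; no decomposition of $E_0$ into fields and no case distinction is needed, and $\beta$ is simply the connecting homomorphism. You instead reduce to $E_0$ a field, treat the split case $E\cong E_0\times E_0$ by hand, and in the field case identify $\mu(E,\sigma)=E_0^\times/\Nm_{E/E_0}(E^\times)$ with $\rH^2(\Gal(E/E_0),E^\times)=\Br(E/E_0)$ via Tate periodicity for cyclic groups, concluding with the inflation--restriction sequence (whose hypothesis $\rH^1(\Gal(\widebar E/E),\widebar E^\times)=0$ is again Hilbert 90). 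Both arguments are standard; yours is more elementary in that it avoids Weil restriction and Shapiro, and it makes the classical description of $\mu(E,\sigma)$ as a relative Brauer group explicit, at the cost of the preliminary product decomposition --- which you correctly identify as the one point needing care, and which works because $\sigma$ fixes the idempotents of $E_0$ and hence preserves each factor. The paper's version is shorter and meshes directly with the torsor-theoretic interpretation $\mu(E,\sigma)=\rH^1(K,T)$ used elsewhere in the argument.
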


\begin{proof}
This follows from Hilbert 90 and the long exact sequence of cohomology induced by the short exact sequence
\[
	1 \longto  \Res_{E_0/K} \bG_{m,E_0} \longto \Res_{E/K} \bG_{m,E} \longto T \longto 1
\]
of group schemes over $K$, where the last map sends $z$ to $\sigma(z)/z$.
\end{proof}

\subsection{Hasse-Witt invariants of a hermitian form}
The Hasse-Witt invariant $\epsilon(b_\lambda) \in \Br K$ is determined by the Hasse-Witt invariant $\epsilon(b)$ and by the twisting cocycle $\lambda \in \mu(E,\sigma)$, as follows.

\begin{proposition}[{\cite[Thm.~4.3]{BCM04}}]\label{prop:HW-of-twisted-form}
For every $\lambda \in E_0^\times$ we have
\[
	\epsilon(b_\lambda) = \epsilon(b_1) + \Nm_{E_0/K} \beta(\lambda)
\]
in $\Br K$.\qed
\end{proposition}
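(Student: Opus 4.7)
The formula is \cite[Thm.~4.3]{BCM04}. The plan is to give a direct proof, as follows. First, by additivity in the étale factorisations of $E_0$ and of $E$ over $E_0$, I would reduce to the case where $E_0$ is a field. The sub-case $E=E_0\times E_0$ is trivial since $\mu(E,\sigma)=0$, so I may further assume $E/E_0$ is a quadratic field extension and fix $\delta\in E_0^\times$ with $E=E_0(\sqrt{\delta})$ and $\sigma(\sqrt\delta)=-\sqrt\delta$.

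Next, I would diagonalise $b_\lambda$ over $E_0$. Decomposing $E=E_0\oplus E_0\sqrt\delta$ and expanding directly gives
\[
(E,b_\lambda)\cong \tr_{E_0/K}\bigl(\langle 2\lambda,-2\lambda\delta\rangle\bigr),
\]
and similarly $(E,b_1)\cong \tr_{E_0/K}(\langle 2,-2\delta\rangle)$. I would then apply Scharlau's transfer formula for the Hasse-Witt invariant to each side and subtract. All contributions depending only on the extension $E_0/K$ and on the common scalar $-\delta$ cancel, leaving $\Nm_{E_0/K}$ applied to the difference $\epsilon_{E_0}(\langle 2\lambda,-2\lambda\delta\rangle)-\epsilon_{E_0}(\langle 2,-2\delta\rangle)$, which by standard Hilbert symbol manipulations collapses to $\Nm_{E_0/K}$ of the quaternion class $(\lambda,\delta)_{E_0}\in\Br E_0$.

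Finally, I would identify $(\lambda,\delta)_{E_0}$ with $\beta(\lambda)$ by chasing the connecting homomorphism in Lemma \ref{lemma:mu-to-brauer}: choosing $\mu\in (E\otimes_K \overline{K})^\times$ with $\mu\sigma(\mu)=\lambda$ yields a Galois cocycle $g\mapsto g(\mu)/\mu$ in $T$; lifting this under $z\mapsto \sigma(z)/z$ produces a $2$-cocycle in $\Res_{E_0/K}\bG_m$ representing the cyclic $E_0$-algebra $(E/E_0,\sigma,\lambda)$, and for the quadratic extension $E=E_0(\sqrt\delta)$ this cyclic algebra is precisely $(\lambda,\delta)_{E_0}$.

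The hard part will be the sign- and normalisation-bookkeeping in the Scharlau transfer formula, together with the identification of $\beta(\lambda)$ with a concrete cyclic algebra at the level of cocycles: these are the explicit computations carried out in \cite{BCM04}, and for the present paper it is enough to cite that result.
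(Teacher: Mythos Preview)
The paper gives no proof of this proposition: it is stated with the citation \cite[Thm.~4.3]{BCM04} and closed immediately with \qed. Your proposal therefore goes strictly beyond what the paper does, and your final sentence already anticipates this.

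Your sketch is correct and is essentially the argument behind the cited theorem. The diagonalisation $(E,b_\lambda)\cong \tr_{E_0/K}\langle 2\lambda,-2\lambda\delta\rangle$ is right, and the crucial observation that makes the cancellation work is that the two $E_0$-forms $\langle 2\lambda,-2\lambda\delta\rangle$ and $\langle 2,-2\delta\rangle$ have the \emph{same} dimension and the \emph{same} determinant $-\delta$ modulo squares, so all the correction terms in Scharlau's transfer formula for the Hasse--Witt invariant are identical and drop out upon subtraction. The remaining symbol computation $(2\lambda,-2\lambda\delta)-(2,-2\delta)=(\lambda,\delta)$ in $\Br E_0$ is a routine Hilbert-symbol identity, and the identification of $(\lambda,\delta)_{E_0}$ with $\beta(\lambda)$ via the cyclic algebra $(E/E_0,\sigma,\lambda)$ is the standard description of the connecting map for a quadratic extension. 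So nothing is missing; if you wanted to include this in the paper rather than cite, the only real work is writing out the transfer formula carefully enough to see the cancellation.
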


\subsection{Twisting group of local and global fields} If $E$ is a local or global field, then the group $\mu(E,\sigma)$ can easily be made explicit using the standard descriptions of Brauer groups of local and global fields.

\begin{lemma}Assume that $E$ is a local field. Then we have a natural commutative diagram
\[
\begin{tikzcd}
1 \arrow{r} & \mu(E,\sigma)  \arrow{r} \arrow{d}{\theta} & \Br E_0 \arrow{r} \arrow{d}{\inv} & \Br E \arrow{r} \arrow{d}{\inv} & 1 \\
0 \arrow{r} & \bZ/2\bZ \arrow{r}{1/2} & \bQ/\bZ \arrow{r}{2} & \bQ/\bZ \arrow{r} & 0
\end{tikzcd}
\]
in which the vertical map $\theta\colon\mu(E,\sigma) \to \bZ/2\bZ$ is an isomorphism.
\end{lemma}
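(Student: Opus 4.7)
The plan is to deduce the lemma from Lemma \ref{lemma:mu-to-brauer} together with the classical computation of Brauer groups of local fields. First I observe that since $E$ is a local field and $E$ is free of rank $2$ over $E_0$, necessarily $[E:E_0]=2$ (the split case $E\cong E_0\times E_0$ is excluded because $E$ is a field); in particular $E_0$ is itself a local field, and $E/E_0$ is a separable quadratic extension.

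For the right-hand square I would invoke the standard compatibility of the invariant map with base change: for a finite extension $F'/F$ of local fields, the diagram
\[
\begin{tikzcd}
\Br F \arrow{r} \arrow{d}{\inv_F} & \Br F' \arrow{d}{\inv_{F'}} \\
\bQ/\bZ \arrow{r}{[F':F]} & \bQ/\bZ
\end{tikzcd}
\]
commutes. Applied with $F=E_0$ and $F'=E$ this gives the commutativity of the right-hand square, once one interprets $\inv_E$ as the natural injection $\Br E \injto \bQ/\bZ$ (the map $\frac{1}{2}\bZ/\bZ \injto \bQ/\bZ$ in the archimedean case $E=\bC$, with $\Br\bC=0$). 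The bottom row is clearly a short exact sequence.

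For the left-hand square I would define $\theta := \inv_{E_0}\circ \beta$. By Lemma \ref{lemma:mu-to-brauer} the map $\beta$ is injective with image the kernel of the base change map $\Br E_0\to \Br E$; by the previous paragraph this kernel coincides with the $2$-torsion of $\bQ/\bZ$, namely $\tfrac{1}{2}\bZ/\bZ$, which is identified with $\bZ/2\bZ$ via the bottom map labelled $1/2$. Hence $\theta$ is an isomorphism and the left square commutes by construction. The only mildly delicate point is the archimedean case $E_0=\bR$, $E=\bC$: here $\Br E=0$ and the base change map is automatically zero, so $\mu(E,\sigma)$ equals all of $\Br\bR \cong \bZ/2\bZ$, consistent with the claim. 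I do not expect any genuine obstacle here; the content of the lemma is essentially a repackaging of local class field theory combined with Lemma \ref{lemma:mu-to-brauer}.
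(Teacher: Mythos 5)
Your proposal is correct and follows essentially the same route as the paper: the authors also deduce the lemma from Lemma \ref{lemma:mu-to-brauer} together with the standard facts that the invariant maps are isomorphisms in the non-archimedean case (and that the base-change map multiplies invariants by the degree), treating the archimedean case $E=\bC$, $E_0=\bR$ separately. Your write-up merely spells out details that the paper leaves implicit.
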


\begin{proof}
If $E$ is non-archimedean then the two maps $\inv$ are isomorphisms, and the lemma follows from Lemma \ref{lemma:mu-to-brauer}. If $E$ is archimedean, then necessarily $E=\bC$ and $E_0=\bR$, and again the lemma follows from Lemma \ref{lemma:mu-to-brauer}.
\end{proof}

\begin{remark}
We will use explicit descriptions of $\theta$ in the following cases:
\begin{enumerate}
\item if $(E,E_0) = (\bC,\bR)$ then $\theta(\lambda) = \sgn(\lambda)$,
\item if $E/E_0$ is an unramified extension of non-archimedean local fields, then $\theta(\lambda)=v_{E_0}(\lambda) \bmod 2$,
\item if $E/E_0$ is a ramified extension of non-archimedean local fields of odd residue characteristic, and if $\pi_E$ is a uniformizer of $E$ then $\pi_{E_0} := \Nm \pi_E$ is a uniformizer of $E_0$ and $\theta$  is given by mapping $\lambda\pi_{E_0}^n$  with $\lambda \in \cO_{E_0}^\times$ to the class of $\lambda$ in $\ell^\times/(\ell^\times)^2 \cong \bZ/2\bZ$, where $\ell$ is the residue field of $\cO_{E_0}$.
\end{enumerate}
\end{remark}

\begin{theorem}[Local-global obstruction]\label{thm:local-global}Assume that $E$ is a global field. Let $\cS$ be the set
of places $w$ of $E_0$ for which
\[
	E_w := E \otimes_{E_0} E_{0,w}
\]
is a field. Then the sequence
\[
	1 \longto \mu(E,\sigma) \longto \bigoplus_{w\in\cS} \mu(E_w,\sigma) \overset{\sum \theta_w}\longto \bZ/2\bZ \longto 0
\]
is exact.
\end{theorem}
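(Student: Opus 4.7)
The plan is to deduce the theorem from global class field theory (Hasse--Brauer--Noether), using the identification of $\mu$ with the kernel of a base-change map on Brauer groups (Lemma \ref{lemma:mu-to-brauer}) both locally and globally. Since $\mu(E_w,\sigma)=0$ for $w\notin \cS$ (because then $E_w=E_{0,w}\times E_{0,w}$), the sum $\bigoplus_{w\in\cS}\mu(E_w,\sigma)$ coincides with the direct sum over all places of $E_0$.

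I would set up the commutative diagram
\[
\begin{tikzcd}
0 \arrow{r} & \mu(E,\sigma) \arrow{r} \arrow{d} & \Br E_0 \arrow{r} \arrow{d} & \Br E  \arrow{d} \\
0 \arrow{r} & \bigoplus_{w} \mu(E_w,\sigma) \arrow{r} \arrow{d}{\sum\theta_w} & \bigoplus_w \Br E_{0,w} \arrow{r} \arrow{d}{\sum \inv} & \bigoplus_v \Br E_v \arrow{d}{\sum \inv}\\
& \bZ/2\bZ \arrow{r}{1/2} & \bQ/\bZ \arrow{r}{2} & \bQ/\bZ
\end{tikzcd}
\]
where $w$ runs over the places of $E_0$ and $v$ over those of $E$. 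The first two rows are exact by Lemma \ref{lemma:mu-to-brauer} (placewise using the diagonal embedding $\Br E_{0,w}\hookrightarrow \Br E_{0,w}\oplus \Br E_{0,w}$ for split $w$); the middle and right columns are exact by class field theory for $E_0$ and $E$. The lower-left square commutes by the local diagram preceding the theorem, and the remaining squares commute by functoriality.

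A diagram chase now yields all three parts. Injectivity of $\mu(E,\sigma)\to\bigoplus_w \mu(E_w,\sigma)$ is immediate from the injectivity of $\Br E_0 \hookrightarrow \bigoplus_w \Br E_{0,w}$; surjectivity of $\sum \theta_w$ follows from $\theta_w\colon \mu(E_w,\sigma)\isomto \bZ/2\bZ$ for $w\in\cS$ together with the existence of an inert place, guaranteed by Chebotarev density applied to the non-trivial quadratic extension $E/E_0$. For exactness in the middle: given $(x_w)$ with $\sum \theta_w(x_w)=0$, the lower-left square gives $\sum\inv(x_w)=0$, so class field theory for $E_0$ produces a lift $x\in\Br E_0$. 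The delicate step, which I expect to be the main obstacle, is showing that this $x$ actually lies in $\mu(E,\sigma)$: for this I would apply Hasse--Brauer--Noether to $E$, using that the image of $x$ in each $\Br E_v$ vanishes by the local hypothesis $x_w\in\mu(E_w,\sigma)=\ker(\Br E_{0,w}\to\Br E_v)$ for $v\mid w\in\cS$ (and $x_w=0$ for split $w$).
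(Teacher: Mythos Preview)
Your proof is correct and follows essentially the same approach as the paper's: both combine Lemma~\ref{lemma:mu-to-brauer} with the Hasse--Brauer--Noether sequences for $E_0$ and $E$ in a commutative diagram, then extract the desired exact sequence. The only difference is organizational: the paper arranges the two class-field-theory sequences as \emph{rows} with the base-change maps $\Br E_0\to\Br E$ (and their local analogues) as vertical arrows, then reads off the kernel sequence directly via the snake lemma, whereas you transpose this layout and carry out the equivalent chase by hand. Your appeal to Chebotarev to ensure $\cS\neq\varnothing$ is fine; the paper simply asserts non-emptiness.
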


\begin{proof}
Class field theory gives a short exact sequence
\[
	1 \longto \Br E_0 \longto \bigoplus\limits_w \Br E_{0,w} \overset{\sum \inv}{\longto} \bQ/\bZ \longto 0,
\]
and a similar sequence for $E$. These two sequences sit in a commutative diagram
\[
\begin{tikzcd}[column sep=1.7em]
1 \arrow{r} & \Br E_0 \arrow{d} \arrow{r} 
	& \Big( \bigoplus\limits_{w\in \cS} \Br E_{0,w} \Big) \oplus  \Big( \bigoplus\limits_{w\not\in \cS}  \Br E_{0,w} \Big) \arrow{d} \arrow{r} 
	& \bQ/\bZ \arrow{d}{2} \arrow{r} & 1 \\
1 \arrow{r} & \Br E \arrow{r} 
	& \Big( \bigoplus\limits_{w\in \cS} \Br E_{w} \Big) \oplus  \Big( \bigoplus\limits_{w\not\in \cS}  \Br E_{0,w} \times \Br E_{0,w} \Big) \arrow{r} 
	& \bQ/\bZ \arrow{r} & 1 
\end{tikzcd}
\]
The kernels of the vertical maps give an exact sequence
\[
	1 \longto \mu(E,\sigma) \longto \bigoplus_{w\in\cS} \mu(E_w,\sigma) \overset{\sum\theta_w}\longto \bZ/2\bZ 
\]
and the rightmost map is surjective since $\cS$ is non-empty.
\end{proof}

\section{Lattices in one-dimensional hermitian forms}

 Let $K$ be a discrete valuation field with residue field $k$, maximal ideal $\fm_K$ and uniformizing element $\pi_K$.  Let $E$ be a finite separable field extension of $K$, and let $\sigma$ be a non-trivial involution of $E$ over $K$. Denote by $E_0$ the fixed field of $\sigma$, and by $\ell$ the residue field of $E$. Let $\lambda \in E_0^\times$, and consider the associated $K$-bilinear form $b_\lambda$ on $E$ defined in \S~\ref{sec:hermitian-construction}. In this section we study lattices in $(E,b_\lambda)$, and the image of a $\bZ$-bilinear form $(E,b_\lambda,\rho_\alpha)$ under $W_\bZ(K) \to W_\bZ(k)$.

\subsection{Almost unimodular \texorpdfstring{$\cO_E$}{O\_E}-lattices}
We construct an $\cO_E$-lattice $\Lambda$ in $(E,b_\lambda)$ satisfying
$\pi_E \Lambda^\vee \subset \Lambda \subset \Lambda^\vee$ and explicitly determine the $k$-bilinear form
\begin{equation}\label{eq:reduced-form}
	\frac{\Lambda^\vee}{\Lambda} \times \frac{\Lambda^\vee}{\Lambda} \overset{b}{\longto} 
	\frac{\pi_K^{-1}\cO_K}{\cO_K} \overset{\pi_K}\longto k
\end{equation}
on the $\ell$-module $\Lambda^\vee/\Lambda$.

Denote the valuation of the different ideal $\cD_{E/K}$ by $\delta$, so that $\cD_{E/K}=\fm_E^\delta$. 

\begin{lemma}\label{lemma:computation-trace}\label{lemma:computation-dual}
For every $n$, the dual of $\fm_E^n$ with respect to $b_\lambda$ is $ \fm_E^{-n-\delta-v_E(\lambda)}$.
\end{lemma}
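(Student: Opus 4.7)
The plan is to reduce the computation of the dual directly to the defining property of the different $\cD_{E/K}=\fm_E^\delta$.

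First I would note that since $E/K$ is finite separable and $\sigma$ is a $K$-algebra automorphism of $E$, the valuation $v_E\circ\sigma$ is a valuation on $E$ extending $v_K$, so by uniqueness $v_E\circ\sigma = v_E$. In particular $\sigma(\fm_E^n)=\fm_E^n$ for every $n$, so as $y$ ranges over $\fm_E^n$, so does $\sigma(y)$. Hence
\[
	(\fm_E^n)^\vee = \{ x \in E \mid \tr_{E/K}(\lambda x \fm_E^n) \subset \cO_K \}.
\]

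Next I would invoke the defining property of the codifferent: $\cD_{E/K}^{-1} = \fm_E^{-\delta}$ is characterized as the largest fractional ideal $J$ of $\cO_E$ satisfying $\tr_{E/K}(J) \subset \cO_K$. Since $\lambda x \fm_E^n$ is the principal fractional ideal $\fm_E^{m}$ with $m=v_E(\lambda)+v_E(x)+n$, the condition $\tr_{E/K}(\lambda x \fm_E^n)\subset \cO_K$ becomes $\fm_E^m \subset \fm_E^{-\delta}$, i.e.\ $m\geq -\delta$. Solving for $v_E(x)$ gives $v_E(x)\geq -n-\delta-v_E(\lambda)$, i.e.\ $x \in \fm_E^{-n-\delta-v_E(\lambda)}$, which is the claim.

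There is no real obstacle here; the only subtlety is the $\sigma$-invariance of $\fm_E^n$, which could also be justified by observing that $\sigma$ induces an $\cO_K$-algebra automorphism of $\cO_E$ (the integral closure of $\cO_K$ in $E$) and hence permutes its maximal ideals, of which there is only one.
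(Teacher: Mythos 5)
Your proof is correct and follows essentially the same route as the paper's: both reduce the condition $\tr_{E/K}(\lambda x\,\sigma(\fm_E^n))\subset\cO_K$ to containment in the inverse different via its characterization as the largest fractional ideal with trace in $\cO_K$ (Serre, \emph{Corps locaux}, III.3, Prop.~7), the only cosmetic difference being that you argue element-by-element and make the $\sigma$-invariance of $\fm_E^n$ explicit, while the paper first observes that the dual is an $\cO_E$-module and hence a power of $\fm_E$.
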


\begin{proof}
The dual is an $\cO_E$-module and hence it equals $\fm_E^m$ where $m$ is the smallest integer such that $b_\lambda(\fm_E^n,\fm_E^m) \subset \cO_E$. By \cite[\S~III.3, Prop.~7]{Serre62} we have $b_\lambda(\fm_E^n,\fm_E^m) \subset \cO_E$ if and only if $\lambda \fm_E^{n+m} \subset \fm_E^{-\delta}$, hence $m=-\delta-n-v_E(\lambda)$.
\end{proof}

\begin{corollary} \label{cor:even-case}
 If $v_E(\lambda) + \delta = -2n$, then the $\cO_K$-lattice $\Lambda := \fm_E^n$ in the $K$-bilinear form $(E,b_\lambda)$ satisfies $\Lambda^\vee = \Lambda$.\qed
\end{corollary}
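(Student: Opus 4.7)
The plan is to invoke Lemma \ref{lemma:computation-dual} directly. That lemma computes the dual of $\fm_E^n$ with respect to $b_\lambda$ as $\fm_E^{-n-\delta-v_E(\lambda)}$. Under the hypothesis $v_E(\lambda)+\delta = -2n$, the exponent on the right-hand side becomes
\[
	-n-\delta-v_E(\lambda) = -n - (v_E(\lambda)+\delta) = -n - (-2n) = n,
\]
so $\Lambda^\vee = \fm_E^n = \Lambda$, proving the claim.

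There is essentially no obstacle here: the statement is an immediate numerical specialization of the preceding lemma. The only thing worth double-checking is the convention for $v_E(\lambda)$ when $\lambda \in E_0^\times \subset E^\times$ (it is the valuation of $\lambda$ viewed as an element of $E$, which is an even multiple of the ramification index times $v_{E_0}(\lambda)$ if $E/E_0$ is ramified, but this plays no role in the computation). One may note in passing that this $\Lambda$ is in particular $\cO_E$-stable, and since multiplication by any element of the form $\alpha \in E^\times$ with $\alpha\sigma(\alpha)=1$ preserves the valuation filtration $\fm_E^n$, the lattice $\Lambda$ is automatically stable under such $\alpha$, which is how the corollary will be used downstream.
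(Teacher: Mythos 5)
Your proof is correct and is exactly the paper's intended argument: the corollary is stated with an immediate \qed because it is a direct numerical specialization of Lemma \ref{lemma:computation-dual}, which is precisely the substitution you carry out. The extra remarks about $\cO_E$-stability are accurate and consistent with how the corollary is used later (e.g.\ in Proposition \ref{prop:unramified-case}).
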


Denote the maximal unramified extension of $K$ in $E$ by $L$. We have $\cO_L/\fm_L=\ell$.

\begin{proposition}\label{prop:odd-case}
If $v_E(\lambda) +  \delta = 1-2n$,  then 
\begin{enumerate}
\item the $\cO_K$-lattice $\Lambda := \fm_E^n$ satisfies  $\pi_E \Lambda^\vee = \Lambda$
\item the element
\[
	u :=  \tr_{E/L}  \big( \lambda \pi_K   \pi_E^{n-1} \sigma(\pi_E^{n-1})   \big)
\]
is a $\sigma$-invariant unit in $\cO_L$
\item the induced $k$-bilinear form on the one-dimensional $\ell$-vector space $\Lambda^\vee/\Lambda$  is isomorphic with the form
\[
	\ell\times \ell \to k,\, (x,y) \mapsto 
	\tr_{\ell/k} \big( \bar{u}  \cdot x \sigma(y)  \big).
\]
\end{enumerate}
\end{proposition}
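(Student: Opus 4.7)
The plan is to derive (i) directly from Lemma~\ref{lemma:computation-dual}, to compute the reduced pairing on $\Lambda^\vee/\Lambda$ in closed form (which establishes (iii) up to integrality of $u$), and to read off all three claims of (ii) from this explicit formula together with the non-degeneracy of the reduced pairing.

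For (i), the hypothesis $v_E(\lambda)+\delta=1-2n$ and Lemma~\ref{lemma:computation-dual} give $\Lambda^\vee=\fm_E^{-n-\delta-v_E(\lambda)}=\fm_E^{n-1}$, so $\pi_E\Lambda^\vee=\Lambda$. For the reduced pairing, $E/L$ is totally ramified, so $\cO_L$ and $\cO_E$ share the residue field $\ell$, and $\Lambda^\vee/\Lambda=\fm_E^{n-1}/\fm_E^n$ is one-dimensional over $\ell$ with basis $\pi_E^{n-1}$. Because the maximal unramified subextension $L\subset E$ is unique, it is $\sigma$-stable, so for lifts $x,y\in\cO_L$ the element $x\sigma(y)$ lies in $L$. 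Expanding
\[
  \pi_K\,b_\lambda(x\pi_E^{n-1},y\pi_E^{n-1})
  =\pi_K\tr_{E/K}\bigl(\lambda\,x\sigma(y)\,\pi_E^{n-1}\sigma(\pi_E^{n-1})\bigr),
\]
writing $\tr_{E/K}=\tr_{L/K}\circ\tr_{E/L}$, and using the $L$-linearity of $\tr_{E/L}$ to pull $x\sigma(y)$ outside and $\pi_K\in K\subset L$ inside the inner trace, gives $\tr_{L/K}(x\sigma(y)\,u)$. Its reduction modulo $\fm_K$ is $\tr_{\ell/k}(\bar u\cdot \bar x\,\sigma(\bar y))$, which proves (iii) as soon as $u\in\cO_L$.

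For (ii) I check three things. \emph{Integrality:} the valuation of the argument of $\tr_{E/L}$ is $(1-2n-\delta)+e_{E/L}+2(n-1)=e_{E/L}-\delta-1$, which lies in $[-\delta,0]$ thanks to $\delta\geq e_{E/L}-1$ and $e_{E/L}\geq 1$; hence the argument lies in $\cD_{E/L}^{-1}=\fm_E^{-\delta}$, and its trace lies in $\cO_L$. \emph{$\sigma$-invariance:} the argument is manifestly $\sigma$-fixed (so lies in $E_0$), and the identity $\sigma\circ\tr_{E/L}=\tr_{E/L}\circ\sigma$, which follows from the sum-over-embeddings description of the trace together with the fact that $\tau\mapsto\tau\circ\sigma$ bijects the set of $L$-embeddings $E\hookrightarrow\bar K$ with the set of extensions of $\sigma|_L\colon L\to L$, then forces $u\in L^{\sigma|_L}=L\cap E_0=L_0$. \emph{Unit:} the reduced pairing on $\Lambda^\vee/\Lambda$ is non-degenerate (if $x\in\Lambda^\vee$ pairs into $\cO_K$ against every element of $\Lambda^\vee$ then $x\in\Lambda^{\vee\vee}=\Lambda$ by Lemma~\ref{lemma:dual-prop}(ii)), and the formula above shows this pairing vanishes identically when $\bar u=0$; thus $\bar u\neq 0$ and $u\in\cO_L^\times$.

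The most delicate point is verifying $\sigma$-invariance in the case that $\sigma|_L$ is non-trivial (that is, when $L\not\subseteq E_0$): the sum-over-embeddings argument handles both this situation and the easier $\sigma|_L=\id$ case uniformly, but one must be careful that $L$ is $\sigma$-stable and that pre-composition with $\sigma$ permutes the set of $\sigma|_L$-extensions of $L$ to $E$.
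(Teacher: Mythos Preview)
Your proof is correct and follows essentially the same route as the paper: part (i) from Lemma~\ref{lemma:computation-dual}, the explicit computation of $\pi_K b_\lambda(x\pi_E^{n-1},y\pi_E^{n-1})=\tr_{L/K}(ux\sigma(y))$ via transitivity of the trace for (iii), integrality of $u$ from the different, and the unit property from non-degeneracy of the discriminant form. The only substantive difference is that where the paper simply asserts that $u$ is $\sigma$-invariant ``by construction'', you supply the justification via the embedding argument showing $\sigma\circ\tr_{E/L}=\tr_{E/L}\circ\sigma$; this is a genuine detail when $\sigma|_L\neq\id$, and your treatment is more complete on that point.
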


%

\begin{proof}
The first assertion follows from Lemma \ref{lemma:computation-dual}. For the second, note that $u$ is $\sigma$-invariant by construction. Observe that
\[
	\lambda \pi_K   \pi_E^{n-1} \sigma(\pi_E^{n-1}) \in \lambda \pi_K \fm_E^{2n-2} \subset \lambda \fm_E^{2n-1} = \fm_E^{-\delta}
\]
so that $u  \in \cO_L$ by \cite[\S~III.3, Prop.~7]{Serre62}. That $u$ is a unit, will follow from the third assertion, which we now prove. 

Let $\xi,\eta$ be elements of $\Lambda^\vee/\Lambda$. We will compute their pairing in $k$ under
\[
	\frac{\Lambda^\vee}{\Lambda} \times \frac{\Lambda^\vee}{\Lambda} \longto
	\frac{ \pi^{-1}\cO_K}{\cO_K} \overset{\pi_K}\longto k.
\]
We have $\Lambda^\vee/\Lambda = \fm_E^{n-1}/\fm_E^{n} \cong \ell$ and we can find $x,y\in \cO_L$
with $\xi = x\pi_E^{n-1}$ and $\eta = y\pi_E^{n-1}$. 
Using the linearity and transitivity properties of the trace, we find
\begin{eqnarray*}
	\pi_K b_\lambda(x\pi_E^{n-1}, y\pi_E^{n-1}) 
	&=&\tr_{E/K} \big( \pi_K \lambda x \pi_E^{n-1} \sigma(y\pi_E^{n-1}) \big) \\
	&=& \tr_{L/K} \tr_{E/L} \big(  \lambda \pi_K \pi_E^{n-1} \sigma(\pi_E^{n-1})  x\sigma(y) \big) \\
	&=&  \tr_{L/K} \big( u x \sigma(y)  \big).
\end{eqnarray*}
It follows that $\tr_{L/K} (u x\sigma(y)) \in \cO_K$ and that $\xi$ and $\eta$ pair to
$\tr_{L/K}(u x\sigma(y)) \bmod \fm_K$ in $k$.  

Since 
the pairing on $\Lambda^\vee/\Lambda$ is perfect, and since $L/K$ is unramified, we conclude that
$u$ must be a unit in $\cO_L$ and that the induced pairing on $\Lambda^\vee/\Lambda$ is as described in the third assertion. 
\end{proof}

\subsection{Image of a hermitian form under the map
\texorpdfstring{$W_\bZ^b(K) \to W_\bZ(k)$}{W\_Z(K) -> W\_Z(k)}}

  Fix an element  $\alpha \in E^\times$ with $\alpha\sigma(\alpha)=1$ and $\alpha\neq \pm1$. For every $\lambda$ in $E_0^\times$ we have
a $\bZ$-bilinear form $(E,b_\lambda,\rho_\alpha)$ over $K$. Since $\alpha$ is a unit in $\cO_{E}$, this $\bZ$-bilinear form is bounded. Every sub-$\cO_E$-module $\Lambda \subset E$ is stable under the action of $\rho_\alpha(\bZ)=\alpha^\bZ$, so we can use Corollary \ref{cor:even-case} and Proposition \ref{prop:odd-case} to say something about the image of $[E,b_\lambda,\rho_\alpha]$ under the map $\partial \colon W^b_\bZ(K) \to W_\bZ(k)$ of Theorem \ref{bigthm:reduction}. 

\begin{proposition}[Unramified case]\label{prop:unramified-case}
If $E/E_0$ is unramified, then there exists a $\lambda\in \mu(E,\sigma)$ such that
$\partial[E,b_\lambda,\rho_\alpha]=0$ in $W_\bZ(k)$. 
\end{proposition}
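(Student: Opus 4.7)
The plan is to observe that in the unramified case we have enough freedom in choosing $\lambda$ within its class in $\mu(E,\sigma)$ so that the lattice $\fm_E^n$ provided by Corollary \ref{cor:even-case} actually becomes unimodular, and hence the class in $W_\bZ(k)$ vanishes for trivial reasons.

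First I would note two general facts. Since $\alpha \in E^\times$ satisfies $\alpha\sigma(\alpha)=1$ and the involution $\sigma$ preserves the valuation $v_E$ (being a $K$-automorphism of the local field $E$), we get $2 v_E(\alpha)=0$, so $\alpha \in \cO_E^\times$. In particular every sub-$\cO_E$-module of $E$ is $\rho_\alpha$-stable. Next, since $E/E_0$ is unramified of degree $2$, a uniformizer $\pi_{E_0}$ of $E_0$ is also a uniformizer of $E$, and $v_E$ restricts to $v_{E_0}$ on $E_0$. Thus as $\lambda$ ranges over $E_0^\times$, $v_E(\lambda)=v_{E_0}(\lambda)$ can take any integer value.

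Next I would construct $\lambda$. Choose any $\lambda \in E_0^\times$ with $v_{E_0}(\lambda) \equiv \delta \pmod 2$ (for instance $\lambda=\pi_{E_0}^{-\delta}$), so that $v_E(\lambda)+\delta=-2n$ is even. By Corollary~\ref{cor:even-case} the $\cO_E$-lattice $\Lambda := \fm_E^n \subset E$ is unimodular with respect to $b_\lambda$. Since $\Lambda$ is $\cO_E$-stable and $\alpha\in\cO_E^\times$, the lattice $\Lambda$ is also $\rho_\alpha$-stable. Therefore $\Lambda$ is a unimodular $\bZ[\alpha]$-lattice in the $\bZ$-bilinear form $(E,b_\lambda,\rho_\alpha)$.

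Finally, by Theorem \ref{bigthm:reduction}(iv) (or directly, since $\Lambda=\Lambda^\vee$ gives $\Lambda^\vee/\Lambda=0$), we conclude $\partial[E,b_\lambda,\rho_\alpha]=[0]=0$ in $W_\bZ(k)$. The class of $\lambda$ in $\mu(E,\sigma)=E_0^\times/N_{E/E_0}(E^\times)$ is well-defined, and the isomorphism class of $(E,b_\lambda)$ depends only on this class, so the proposition follows. The whole argument is routine given the preparatory material; the only point requiring attention is the parity condition on $v_{E_0}(\lambda)+\delta$, which is freely achievable precisely because $E/E_0$ is unramified.
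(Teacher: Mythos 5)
Your proof is correct and follows essentially the same route as the paper: choose $\lambda$ with $v_E(\lambda)+\delta$ even (possible precisely because $E/E_0$ is unramified), invoke Corollary \ref{cor:even-case} to get a unimodular $\cO_E$-lattice, and note it is $\alpha$-stable since $\alpha\in\cO_E^\times$. The extra remarks on well-definedness modulo $\mu(E,\sigma)$ are fine but not needed.
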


\begin{proof}
Since $E/E_0$ is unramified, there exists a $\lambda \in E_0^\times$ such that $v_E(\lambda) + \delta$ is even, and by Corollary \ref{cor:even-case} there is an $\cO_E$-module $\Lambda \subset E$ with $\Lambda^\vee = \Lambda$. This module is stable under $\alpha$, hence $\partial [E,b_\lambda,\rho_\alpha] = [\Lambda^\vee/\Lambda]=0$.
\end{proof}

\begin{lemma}
If $E/E_0$ is ramified, then $\bar\alpha := \alpha \bmod \fm_E$ satisfies $\bar\alpha \in \{\pm 1\}$.
\end{lemma}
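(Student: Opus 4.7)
The plan is to reduce the norm equation $\alpha\sigma(\alpha)=1$ modulo $\fm_E$ and exploit that a ramified quadratic extension is totally ramified, so that $\sigma$ acts trivially on the residue field.

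First I would note that $\alpha\sigma(\alpha)=1$ forces $v_E(\alpha)=0$, so $\alpha\in\cO_E^\times$ and $\bar\alpha\in\ell^\times$ is well defined. Next, since $E/E_0$ is ramified of degree $2$, it is totally ramified: the residue field $\ell$ of $E$ coincides with the residue field of $E_0$, and the nontrivial element $\sigma\in\Gal(E/E_0)$ induces the identity on $\ell$ (either by a direct check using a uniformizer $\pi_E$ of $E$ with $\sigma(\pi_E) = \pm\pi_E\cdot u$ for some $u\in\cO_E^\times$ congruent to $1$ modulo $\fm_E$, or more simply by noting that the residue extension is trivial and $\Gal(\ell/\ell_0)$ is trivial). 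Then reducing $\alpha\sigma(\alpha)=1$ modulo $\fm_E$ gives $\bar\alpha\cdot\overline{\sigma(\alpha)} = \bar\alpha^2 = 1$ in $\ell^\times$, whence $\bar\alpha\in\{\pm 1\}$.

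The argument is essentially a one-line reduction; the only point to verify carefully is that $\sigma$ acts trivially on $\ell$, which is the standard fact that a ramified quadratic extension of a complete (or henselian) discretely valued field is totally ramified. No obstacle is expected.
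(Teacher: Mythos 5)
Your proof is correct and matches the paper's argument exactly: the paper's entire proof is the observation that $\sigma$ induces the identity on $\ell$ (since the quadratic extension is ramified, hence totally ramified) and that reducing $\alpha\sigma(\alpha)=1$ then gives $\bar\alpha^2=1$. Your additional remarks about $\alpha$ being a unit and the verification that $\sigma_\ell=\id$ are just the details the paper leaves implicit.
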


\begin{proof} We have $\sigma_\ell = \id$ and $\bar\alpha \sigma_\ell(\bar\alpha)=1$.
\end{proof}

\begin{proposition}[Ramified with odd residue characteristic]\label{prop:ramified-case}
Assume that $E/E_0$ is ramified, and that the residue characteristic is odd. Let $\chi\colon \bZ \to k^\times$ be the character that maps $1$ to $\bar \alpha$. Then for each of the two classes  $\gamma \in W(k)$ with $\dim \gamma \equiv [\ell : k] \bmod 2$ there is a unique
$\lambda \in \mu(E,\sigma)$ such that
\[
	\partial [ E, b_\lambda, \rho_\alpha ] = \gamma
\]
in $W(k)=W_\bZ(k,\chi)\subset W_\bZ(k)$.
\end{proposition}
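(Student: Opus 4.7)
The plan is to apply Proposition \ref{prop:odd-case} to the $\cO_E$-lattice $\Lambda = \fm_E^n$ and track how the resulting invariant $\bar u(\lambda)\in\ell^\times$ varies with $\lambda\in\mu(E,\sigma)$, then use discriminants to distinguish the resulting Scharlau transfers in $W(k)$.

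First I check that the odd case of Proposition \ref{prop:odd-case} always applies. Since $\lambda\in E_0^\times$, $v_E(\lambda)=2v_{E_0}(\lambda)$ is even; and since $E/E_0$ is tamely ramified of degree $2$, multiplicativity of the different yields $\delta = 1 + 2\delta_{E_0/K}$, which is odd. Hence $v_E(\lambda)+\delta$ is odd, and we may take $n=(1-v_E(\lambda)-\delta)/2$. The lattice $\Lambda=\fm_E^n$ is $\cO_E$-stable, in particular $\alpha$-stable. Because $\sigma$ acts trivially on $\ell$ (ramified extension, odd residue characteristic), the $k$-bilinear form on $\Lambda^\vee/\Lambda\cong\ell$ described in Proposition \ref{prop:odd-case}(iii) is the Scharlau transfer $\tr_{\ell/k}\langle \bar u(\lambda)\rangle$, and $\alpha$ acts by $\bar\alpha$; hence $\partial[E,b_\lambda,\rho_\alpha]$ lies in $W_\bZ(k,\chi)=W(k)$ and equals that transfer form.

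Next I parametrize $\mu(E,\sigma)$. Since $\ell_0=\ell$, the maximal unramified subextensions of $E/K$ and $E_0/K$ coincide, so $L\subseteq E_0$. Using Hensel's lemma one identifies $\mu(E,\sigma) \cong \cO_{E_0}^\times/\Nm(\cO_E^\times) \cong \ell^\times/(\ell^\times)^2$, and we may choose representatives $\{1, u_0\}\subset \cO_L^\times$ with $\bar u_0\in\ell^\times$ a non-square. Because $u_0\in\cO_L$ and $\tr_{E/L}$ is $\cO_L$-linear, the formula in Proposition \ref{prop:odd-case}(ii) gives $\bar u(u_0) = \bar u_0\cdot \bar u(1)$.

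Finally, the discriminant of a Scharlau transfer satisfies $\disc \tr_{\ell/k}\langle v\rangle \equiv \Nm_{\ell/k}(v)\cdot \disc \tr_{\ell/k}\langle 1\rangle \pmod{(k^\times)^2}$, and the norm induces an isomorphism $\ell^\times/(\ell^\times)^2 \isomto k^\times/(k^\times)^2$ (since $\Nm_{\ell/k}$ carries a generator of $\ell^\times$ to a generator of $k^\times$). Hence $\Nm_{\ell/k}(\bar u_0)$ is a non-square and the two transfer forms have distinct discriminants. As $W(k)$ for $k$ finite of odd characteristic has exactly two elements of each dimension parity, distinguished by discriminant, and both transfer forms have dimension $[\ell:k]$, the map $\lambda\mapsto \partial[E,b_\lambda,\rho_\alpha]$ is a bijection from $\mu(E,\sigma)$ onto the two classes of dimension parity $[\ell:k]\bmod 2$. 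The main technical point is the inclusion $L\subseteq E_0$, which allows $u_0$ to be pulled through the trace; everything else is direct computation.
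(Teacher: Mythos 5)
Your proof is correct and follows essentially the same route as the paper: verify that $v_E(\lambda)+\delta$ is odd via tameness and transitivity of the different, apply Proposition \ref{prop:odd-case} to get the transfer form $\tr_{\ell/k}\langle\bar u\rangle$ with $\bZ$ acting through $\chi$, and distinguish the two target classes in $W(k)$ by the determinant, using that $\Nm_{\ell/k}$ carries non-squares to non-squares. Your explicit parametrization of $\mu(E,\sigma)$ by units of $\cO_L$ (via $L\subseteq E_0$) just makes precise the paper's briefer statement that changing $\lambda$ by a unit of $\cO_{E_0}^\times$ multiplies $\Nm(\bar u)$ by an arbitrary class of $k^\times/(k^\times)^2$.
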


Here $W_\bZ(k,\chi)$ denotes the subgroup of $W_\bZ(k)$ generated by the $\bZ$-bilinear forms of the form $(k,b,\chi)$. It is one of the components in the decomposition of Theorem \ref{thm:isotypical-decomposition}.

\begin{proof}[Proof of Proposition \ref{prop:ramified-case}]
Since the residue characteristic is odd, the quadratic extension $E/E_0$ is tamely ramified and $\cD_{E/E_0} = \fm_E$. The transitivity formula $\cD_{E/K} = \cD_{E_0/K} \cD_{E/E_0} $ implies
\[
	v_E(\lambda)  + \delta  = 2 v_{E_0} (\lambda) + 2v_{E_0}(\cD_{E_0/K}) + 1
\]
and hence $v_E(\lambda)  + \delta$ is odd and Proposition \ref{prop:odd-case} applies. Since $\sigma_\ell =\id$, we see that $\partial [E,b_\lambda,\rho_\alpha]$ can be represented by the bilinear form
\[
	\bar{b}\colon \ell \times \ell \to k,\, (x,y) \mapsto \tr_{\ell/k} (\bar{u} xy)
\]
on which $\bZ$ acts via $\chi$. The group $W(k)$ has four elements, distinguished by their dimension in $\bZ/2\bZ$ and determinant in $k^\times /(k^\times)^2$. For the form $\bar{b}$ we have
\begin{eqnarray*}
	\dim \bar{b} &=& [\ell:k] \in \bZ/2\bZ, \\
	\det \bar{b} &=&  - (-1)^{[\ell:k]} \Nm (\bar u) \in k^\times/(k^\times)^2.
\end{eqnarray*}
Since the norm map $\ell^\times \to k^\times$ is surjective, we see that by changing $\lambda$ by a unit in $\cO_{E_0}^\times$, we can reach both classes in $W(k)$ of dimension $[\ell:k]$.
\end{proof}

\begin{proposition}[Ramified with even residue characteristic]\label{prop:even-case}
Assume that $E/E_0$ is ramified, and that the residue characteristic is even. Then for all
$\lambda \in \mu(E,\sigma)$ we have 
\[
	\partial [ E, b_\lambda, \rho_\alpha ] = [\ell:k] \delta 
\]
in $W_\bZ(k,{\mathbf{1}}) = W(k) = \bZ/2\bZ$.
\end{proposition}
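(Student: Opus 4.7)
The plan is to split into cases on the parity of the different exponent $\delta$. The key elementary observation is that $v_E(\lambda)=2v_{E_0}(\lambda)$ is always even (since $\lambda\in E_0^\times$), so the parity of $v_E(\lambda)+\delta$ coincides with the parity of $\delta$ alone, and in particular does not depend on the choice of $\lambda\in\mu(E,\sigma)$. This already makes it plausible that $\partial[E,b_\lambda,\rho_\alpha]$ is a well-defined function of $\delta$ (and the residual data), as the statement requires.

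Before splitting into cases I would dispose of the $\bZ$-action. By the unlabelled lemma preceding Proposition \ref{prop:ramified-case} we have $\bar\alpha\in\{\pm 1\}$, and since $\ell$ has characteristic $2$ this forces $\bar\alpha=1$ in $\ell$. Consequently, for any $\cO_E$-stable almost unimodular lattice $\Lambda\subset E$, the induced $\bZ$-action on $\Lambda^\vee/\Lambda$ is trivial, and $\partial[E,b_\lambda,\rho_\alpha]$ automatically lies in the trivial-character component $W_\bZ(k,\mathbf{1})=W(k)=\bZ/2\bZ$.

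If $\delta$ is even, then $v_E(\lambda)+\delta$ is even, and Corollary \ref{cor:even-case} produces an $\cO_E$-lattice $\Lambda$ with $\Lambda=\Lambda^\vee$; such a lattice is automatically $\alpha$-stable since $\alpha\in\cO_E^\times$. Hence $\partial[E,b_\lambda,\rho_\alpha]=0$, in agreement with $[\ell:k]\delta\equiv 0\pmod 2$. If $\delta$ is odd, Proposition \ref{prop:odd-case} provides an almost unimodular $\cO_E$-lattice $\Lambda$ whose discriminant module $\Lambda^\vee/\Lambda$ is a one-dimensional $\ell$-vector space, hence a $k$-vector space of dimension $[\ell:k]$. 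Because $\sigma_\ell=\id$ in the ramified setting, Proposition \ref{prop:odd-case}(iii) describes the reduced form explicitly as $(x,y)\mapsto\tr_{\ell/k}(\bar u\, xy)$ for a unit $\bar u\in\ell^\times$.

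The main obstacle is then the final identification: the Witt class of this trace form in $W(k)=\bZ/2\bZ$ must equal $[\ell:k]\bmod 2$, which (since $\delta$ is odd) is $[\ell:k]\delta\bmod 2$. I would handle this by invoking the description of the Witt group in characteristic $2$ implicit in the statement of the proposition, namely that non-alternating symmetric bilinear forms over $k$ are classified up to Witt equivalence by their rank modulo $2$. Separability of $\ell/k$ makes the trace nonzero on squares, so $\tr_{\ell/k}(\bar u x^2)$ does not vanish identically and the form is non-alternating; its Witt class is therefore $[\ell:k]\bmod 2$, completing the argument.
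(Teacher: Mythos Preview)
Your proof is correct and takes essentially the same approach as the paper's, which is even terser: it notes $v_E(\lambda)$ is even, invokes Corollary~\ref{cor:even-case} when $\delta$ is even, and Proposition~\ref{prop:odd-case} when $\delta$ is odd to get a form of dimension $[\ell:k]$. Your added justifications (the residual $\bZ$-action being trivial via $\bar\alpha=1$ in characteristic~$2$, and the non-alternating check) are correct but not strictly needed---once $W(k)=\bZ/2\bZ$ is granted, the Witt class of \emph{any} nondegenerate symmetric bilinear form over $k$ is its dimension modulo~$2$.
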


\begin{proof}[Proof of Proposition \ref{prop:even-case}]
Note that $v_E(\lambda)$ is even. If $\delta$ is even, then Corollary \ref{cor:even-case} implies that $\partial [E,b_\lambda, \rho_\alpha ]=0$. If $\delta$ is odd, then Proposition \ref{prop:odd-case} shows that 
$\partial [E,b_\lambda,\rho_\alpha]$ can be represented by a bilinear form of dimension $[\ell:k]$.
\end{proof}

\subsection{Relation to the characteristic polynomial} 

\begin{lemma}\label{lemma:S-at-one}
Assume that $E/E_0$ is ramified. Let $S$ be the characteristic polynomial of $\alpha$ over $K$. Then
 $ v_K(S(1))+ v_K(S(-1)) \equiv [\ell:k] \delta \bmod 2$. If moreover $k$ has odd characteristic, then
 either 
\begin{enumerate} 
\item$\bar\alpha = 1$ then $ v_K(S(1)) \equiv [\ell:k] \bmod 2$, or
\item  $\bar\alpha = -1$ then $ v_K(S(-1)) \equiv [\ell:k] \bmod 2$.
\end{enumerate}
\end{lemma}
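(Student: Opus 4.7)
The plan is to express $v_K(S(\pm 1))$ as valuations in $E$, then parametrize $\alpha$ via Hilbert~90 and exploit one structural observation: a non-zero $\sigma$-anti-invariant element of $E$ has $v_E$-valuation congruent to $d := v_E(\cD_{E/E_0})$ modulo $2$.

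First, from $S(t) = \prod_\tau (t-\tau\alpha)$ one has $S(1) = \Nm_{E/K}(1-\alpha)$ and $S(-1) = \pm \Nm_{E/K}(1+\alpha)$. The assumption that $E/E_0$ is ramified forces $f_{E/K} = f_{E_0/K} = [\ell:k]$, so $v_K \circ \Nm_{E/K} = [\ell:k] \cdot v_E$, reducing the whole lemma to computing $v_E(1 \pm \alpha)$ mod $2$. By Hilbert~90 I can write $\alpha = \sigma(z)/z$ for some $z \in E^\times$, and since $z$ is determined up to $E_0^\times$ (which shifts $v_E(z)$ by an even integer), I may normalize so that $v_E(z) \in \{0,1\}$. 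Then
\[
	1 - \alpha = \frac{z-\sigma(z)}{z}, \qquad 1+\alpha = \frac{z+\sigma(z)}{z},
\]
and both numerators are non-zero because $\alpha \neq \pm 1$.

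The core observation: $z + \sigma(z) = \tr_{E/E_0}(z) \in E_0$, so $v_E(z+\sigma(z))$ is even; while $z-\sigma(z)$ lies in the $\sigma$-anti-invariant $E_0$-line of $E$, which is generated by $\pi_E - \sigma(\pi_E) = f'(\pi_E)$, where $f$ is the Eisenstein minimal polynomial of $\pi_E$ over $E_0$. Since $\cO_E = \cO_{E_0}[\pi_E]$, this same generator also generates $\cD_{E/E_0}$, so any non-zero anti-invariant element has $v_E$-valuation $\equiv d \pmod 2$. Summing the two formulas yields
\[
	v_E(1-\alpha) + v_E(1+\alpha) \equiv d - 2v_E(z) \equiv d \pmod 2,
\]
and since $\delta = d + 2\, v_{E_0}(\cD_{E_0/K}) \equiv d \pmod 2$ by the tower formula for the different, multiplying by $[\ell:k]$ proves part~(i).

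For part~(ii), assume $k$ has odd characteristic, so $E/E_0$ is tame and $d=1$. The unit $\bar\rho := \overline{\sigma(\pi_E)/\pi_E} \in \ell^\times$ satisfies $\bar\rho^2 = 1$ and $\bar\rho \neq 1$ (the latter because $\pi_E(1-\rho) = f'(\pi_E)$ generates $\cD_{E/E_0} = \fm_E$, so $1-\rho$ is a unit), hence $\bar\rho = -1$. Since $E/E_0$ is ramified, $\sigma$ acts trivially on $\ell$, and reducing $\alpha = \sigma(z)/z$ modulo $\fm_E$ gives $\bar\alpha = 1$ exactly when $v_E(z)=0$, and $\bar\alpha = -1$ exactly when $v_E(z)=1$. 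Thus the two sub-cases of (ii) correspond to the two normalizations of $z$: if $\bar\alpha = 1$ then $v_E(1-\alpha) = v_E(z-\sigma(z))$ is odd; if $\bar\alpha = -1$ then $v_E(1+\alpha) = v_E(z+\sigma(z)) - 1$ is odd, because $v_E(z+\sigma(z))$ is a non-negative even integer. Multiplying by $[\ell:k]$ gives the required congruences. The main obstacle is recognizing the conceptual identity \emph{$\sigma$-anti-invariance} $\leftrightarrow$ \emph{different exponent}; once that is in place, the remainder is bookkeeping.
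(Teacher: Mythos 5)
Your argument is correct, and it rests on the same pivot as the paper's proof: reduce everything to the parities of $v_E(1\mp\alpha)$ via $v_K(S(\pm1))=[\ell:k]\,v_E(1\mp\alpha)$, and then use the fact that a non-zero $\sigma$-anti-invariant element of $E$ has $v_E$-valuation congruent to $v_E(\cD_{E/E_0})$ modulo $2$. Where you differ is in how the anti-invariant element is produced. The paper simply writes $(1-\alpha)(1+\alpha)=1-\alpha^2=\alpha(\sigma(\alpha)-\alpha)$, observes that $\sigma(\alpha)-\alpha$ is anti-invariant, and gets part (i) in one line; for part (ii) it notes that in odd residue characteristic exactly one of $1\pm\alpha$ is a unit (as $\bar\alpha=\pm1$ and $2\in k^\times$), so one of $v_K(S(\pm1))$ vanishes and the other inherits the full parity from (i). You instead invoke Hilbert~90 to write $\alpha=\sigma(z)/z$ and split the two numerators $z\mp\sigma(z)$ into an anti-invariant part (odd contribution, valuation $\equiv d$) and a trace part (even contribution); this is a heavier parametrization, but it buys you a uniform treatment of part (ii), since the dichotomy $\bar\alpha=1$ versus $\bar\alpha=-1$ becomes exactly the dichotomy $v_E(z)$ even versus odd, and each of $v_E(1-\alpha)$, $v_E(1+\alpha)$ is computed individually rather than only their sum. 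You also make explicit (via $\cO_E=\cO_{E_0}[\pi_E]$ and $f'(\pi_E)=\pi_E-\sigma(\pi_E)$) the link between anti-invariance and the different exponent, which the paper uses but leaves tacit in the step $\delta\equiv v_E(\sigma(\alpha)-\alpha)\bmod 2$. The only cosmetic caveats: the normalization $v_E(z)\in\{0,1\}$ and the monogenicity $\cO_E=\cO_{E_0}[\pi_E]$ implicitly use completeness of $K$, which is the setting the paper works in anyway, and the word ``non-negative'' in your treatment of $v_E(z+\sigma(z))$ is superfluous --- evenness (and non-vanishing, guaranteed by $\alpha\neq-1$) is all that is needed.
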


\begin{proof}
The transitivity formula for the different shows $\delta \equiv v_E(\cD_{E/E_0}) \bmod 2$, and hence
$\delta \equiv v_E(\sigma(\alpha)-\alpha) \bmod 2$.  Since $\alpha$ and $\sigma(\alpha)$ are mutually inverse units, we have
 \[
 	v_E(1-\alpha) + v_E(1+\alpha) = v_E(1-\alpha^2) = v_E(\sigma(\alpha)-\alpha) \equiv \delta \bmod 2
\]
Since all the roots of $S$ have the same valuation, we find
\[
	v_K(S(\pm 1)) = [E:K] \frac{v_E(1 \mp \alpha) }{ e(E:K) } = [ \ell : k ] v_E(1 \mp \alpha),
\]
where $e(E:K)$ denotes the ramification index.
Comparing the above expressions yields $v_K(S(1))+v_K(S(-1)) \equiv [\ell:k]\delta \bmod 2$. 

Now if the residue characteristic is odd then $\delta \equiv 1 \bmod 2$. If moreover $\bar\alpha=1$, then $1+\alpha$ is a unit and $v_K(S(-1))=0$,  and similarly, if  $\bar\alpha=-1$, then $1-\alpha$ is a unit and $v_K(S(1))=0$.
\end{proof}

\section{Unimodular lattices in $\bZ$-bilinear forms over local fields}\label{sec:unimodular-in-local}

Let $K$ be a non-archimedean local field, $E_0$ an \'etale $K$-algebra, and $E$ an \'etale $E_0$-algebra which is free of rank $2$ over $E_0$. Denote by $\sigma$ the involution of $E$ fixing $E_0$. Let $\alpha \in E^\times$ satisfy $\alpha \sigma(\alpha)=1$ and $\alpha \neq \sigma(\alpha)$, and denote by $S$ be the characteristic polynomial of $\alpha$ over $K$.

\begin{proposition}\label{prop:at-p}
Assume that the characteristic of $k$ is odd, that $S(1)$ and $S(-1)$ are non-zero, and that $v_K(S(1))$ and $v_K(S(-1))$ are even. Then
there exists a $\lambda \in \mu(E,\sigma)$ and a unimodular $\cO_K$-lattice $\Lambda \subset (E,b_\lambda)$ stable under $\alpha$.
\end{proposition}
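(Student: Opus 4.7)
The plan is to apply Theorem \ref{thm:B}(iv) with $A = \cO_K[\bZ]$: since $\alpha$ is a unit in $\cO_E$, the form $(E, b_\lambda, \rho_\alpha)$ is automatically bounded for every $\lambda$, so it suffices to find $\lambda \in \mu(E,\sigma)$ with $\partial [E, b_\lambda, \rho_\alpha] = 0$ in $W_\bZ(k)$.

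First I decompose $E$ as a product of fields and group the factors according to the action of $\sigma$. This writes $(E,E_0,\sigma)$ as an orthogonal sum of two kinds of pieces: (a) quadratic field extensions $E_i/E_{0,i}$ with $\sigma|_{E_i}$ non-trivial, and (b) pairs $E_i \times E_i$ on which $\sigma$ swaps the factors. The twisting group and the form split accordingly, allowing me to choose $\lambda_i$ locally. On a type (b) piece $\mu$ is trivial, the form is hyperbolic, and $\alpha$ acts by units on $\cO_{E_i} \oplus \cO_{E_i}$, so (after an appropriate transport) there is an $\alpha$-stable unimodular lattice and the local $\partial$ vanishes. Among the type (a) pieces, Proposition \ref{prop:unramified-case} supplies for each unramified one a $\lambda_i$ killing $\partial_i$. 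For a ramified $E_i/E_{0,i}$, the involution is trivial on $\ell_i$, so $\bar\alpha_i \in \{\pm 1\}$, and Proposition \ref{prop:ramified-case} says the local contribution lies in the isotypical component $W_\bZ(k,\chi_{\bar\alpha_i}) \cong W(k)$ and can be freely prescribed among the two classes of dimension $[\ell_i:k] \bmod 2$.

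What remains is a parity check. By Theorem \ref{thm:isotypical-decomposition} the overall obstruction splits over the characters $\chi_{+1}$ and $\chi_{-1}$; let $R_\pm$ be the set of ramified type (a) indices with $\bar\alpha_i = \pm 1$. In the $\chi_{+1}$-summand the reachable sums form a coset of the order-$2$ subgroup $\{0, \langle 1, -u\rangle\} \subset W(k)$, hence equal exactly the two classes of dimension $\sum_{i \in R_+}[\ell_i:k] \bmod 2$. Using $v_K \circ \Nm_{E_i/K} = f_i \cdot v_{E_i}$, one checks that the (b)-factors, the unramified type (a) factors, and the $R_-$-factors each contribute an even summand to $v_K(S(1))$, while Lemma \ref{lemma:S-at-one} gives $v_K(S_i(1)) \equiv [\ell_i:k] \pmod 2$ for $i \in R_+$; combining,
\[
	v_K(S(1)) \equiv \sum_{i \in R_+}[\ell_i:k] \pmod 2.
\]
The evenness hypothesis on $v_K(S(1))$ then forces the right-hand side to be even, so $0 \in W(k)$ is reachable in the $\chi_{+1}$-summand (trivially so if $R_+ = \emptyset$). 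The same argument works for $\chi_{-1}$ using $v_K(S(-1))$. Assembling the local choices into $\lambda \in \mu(E,\sigma)$ gives $\partial[E,b_\lambda,\rho_\alpha]=0$, and Theorem \ref{thm:B}(iv) produces the desired $\alpha$-stable unimodular $\cO_K$-lattice. The delicate step is this parity bookkeeping: the hypotheses on $v_K(S(\pm 1))$ enter precisely through Lemma \ref{lemma:S-at-one} to force the dimensions in the ramified isotypical components to be even, which is what allows the flexibility of Proposition \ref{prop:ramified-case} to clear the obstruction.
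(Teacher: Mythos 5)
Your proposal is correct and follows essentially the same route as the paper: decompose $E_0$ into its factors, kill the local obstruction at split and unramified places via neutrality and Proposition \ref{prop:unramified-case}, and use Proposition \ref{prop:ramified-case} together with the parity statement of Lemma \ref{lemma:S-at-one} to clear the two ramified isotypical components, then conclude with Theorem \ref{thm:B}(iv). The only difference is that you spell out the bookkeeping showing the split, unramified, and $R_-$ factors contribute evenly to $v_K(S(1))$, a detail the paper leaves implicit.
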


\begin{proof}
The algebra $E_0$ decomposes as a product of fields $E_0 = \prod_{w\in \cS} E_{0,w}$, indexed by a finite set $\cS$. For every $w\in \cS$, the algebra $E_w := E \otimes_{E_0} E_{0,w} $ is a quadratic $E_{0,w}$-algebra of exactly one of the following types:
\begin{enumerate}
\item[(sp)] $E_w=E_{0,w}\times E_{0,w}$ 
\item[(un)] $E_w$ is an unramified quadratic extension of $E_{0,w}$
\item[(+)] $E_w$ is a ramified quadratic extension of $E_{0,w}$, and the image $\bar\alpha_v$ of $\alpha$ in the residue field $\ell_w$ of $E_w$ is $1$
\item[(-)]  $E_w$ is a ramified quadratic extension of $E_{0,w}$, and the image $\bar\alpha_v$ of $\alpha$ in the residue field $\ell_w$ of $E_w$ is $-1$
\end{enumerate}
This gives a partition $\cS = \cS_{\sp} \cup \cS_{\un} \cup \cS_{+} \cup \cS_{-}$.

Now choose $\lambda = (\lambda_w)_w$ in $E_0^\times = \prod_w E_{0,w}^\times$ such that 
\begin{enumerate}
\item for every $w \in \cS_{\un}$ we have $\partial[ E_w, b_{\lambda_w}, \alpha ] =0$ 
\item $\sum_{w \in \cS_+} \partial [ E_w, b_{\lambda_w}, \alpha ] = 0$ in $W(k) = W(k,\chi_+) \subset W_\bZ(k)$
\item $\sum_{w \in \cS_-} \partial [ E_w, b_{\lambda_w}, \alpha ] = 0$ in $W(k) = W(k,\chi_-) \subset W_\bZ(k)$
\end{enumerate}
where $\chi_{\pm 1}$ denotes the character $\bZ \to k^\times, 1 \mapsto \pm 1$. Such $\lambda=(\lambda_w)$ indeed exists. For (i) this follows from Propostion \ref{prop:unramified-case}. For (ii) and (iii) note that by Lemma \ref{lemma:S-at-one} and the condition on $S(\pm 1)$ we have
\[
	\sum_{w\in \cS_+} [\ell_w:k] \equiv \sum_{w\in \cS_-} [\ell_w:k] \equiv 0 \pmod 2.
\]
Hence it follows from Proposition \ref{prop:ramified-case} that we can choose $(\lambda_w)_{w\in \cS_{\pm}}$ as required.

Since $(E_w, b_{\lambda_w}, \alpha)$ is neutral for all $w\in \cS_{\sp}$, we conclude that 
\[
	\partial [ E,b_\lambda,\rho_\alpha ] = \sum_{w \in \cS} \partial [ E_w, b_{\lambda_w}, \rho_{\alpha_w} ] = 0
\]
in $W_\bZ(k)$, and Theorem \ref{bigthm:reduction} gives us that $(E,b_\lambda,\rho_\alpha)$ contains a unimodular $\bZ$-stable lattice.
\end{proof}

\begin{proposition}\label{prop:local-unimodular-existence-at-2}
Assume that the characteristic of $k$ is even, that $S(1)$ and $S(-1)$ are non-zero, and that $v_K(S(1))+v_K(S(-1))$ is even. Then
there exists a $\lambda \in \mu(E,\sigma)$ and a unimodular $\cO_K$-lattice $\Lambda \subset (E,b_\lambda)$ stable under $\alpha$. 
\end{proposition}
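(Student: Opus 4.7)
I adapt the strategy of Proposition \ref{prop:at-p} to the dyadic residue characteristic. Decompose $E_0 = \prod_{w \in \cS} E_{0,w}$ into its field factors, so that $E = \prod_w E_w$, and partition $\cS = \cS_{\sp} \sqcup \cS_{\un} \sqcup \cS_{r}$ according to whether $E_w$ is split, unramified quadratic, or ramified quadratic over $E_{0,w}$. For any $\lambda = (\lambda_w)_w \in E_0^\times = \prod_w E_{0,w}^\times$, additivity of $\partial$ gives
\[
    \partial[E, b_\lambda, \rho_\alpha] = \sum_{w \in \cS} \partial[E_w, b_{\lambda_w}, \rho_\alpha] \quad \text{in } W_\bZ(k).
\]

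I choose $(\lambda_w)_w$ so as to minimize this obstruction. At split places, $E_{0,w} \times \{0\}$ is an $\alpha$-stable Lagrangian in $(E_w, b_{\lambda_w})$ for every $\lambda_w$, so these terms vanish. At unramified places, Proposition \ref{prop:unramified-case} furnishes a $\lambda_w$ with $\partial[E_w, b_{\lambda_w}, \rho_\alpha] = 0$. At ramified places, Proposition \ref{prop:even-case} gives
\[
    \partial[E_w, b_{\lambda_w}, \rho_\alpha] = [\ell_w : k]\,\delta_w \in W_\bZ(k, \mathbf{1}) = \bZ/2\bZ,
\]
a value \emph{independent} of $\lambda_w$; unlike in Proposition \ref{prop:at-p}, there is no subdivision into $\bar\alpha = 1$ and $\bar\alpha = -1$, since these coincide in residue characteristic two. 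With these choices, the residual obstruction is $\sum_{w \in \cS_{r}} [\ell_w : k]\,\delta_w \in \bZ/2\bZ$.

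The main remaining task---and the only substantive new work beyond the proof of Proposition \ref{prop:at-p}---is to show this sum equals $v_K(S(1)) + v_K(S(-1)) \bmod 2$, which vanishes by hypothesis. Write $S = \prod_w S_w$ with $S_w$ the characteristic polynomial of multiplication by $\alpha$ on $E_w$ over $K$. For $w \in \cS_{\sp}$, taking $\alpha = (a, a^{-1})$ one computes $S_w(1)\,S_w(-1) = \Nm_{E_{0,w}/K}(-(1-a^2)^2/a^2)$, which has even $v_K$-valuation since its argument is $-1$ times a square. For $w \in \cS_{\un}$, the identity $v_K \circ \Nm_{E_w/K} = 2 f(E_{0,w}/K) \cdot v_{E_w}$ shows each of $v_K(S_w(\pm 1))$ is even. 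For $w \in \cS_{r}$, Lemma \ref{lemma:S-at-one} gives precisely $v_K(S_w(1)) + v_K(S_w(-1)) \equiv [\ell_w:k]\,\delta_w \pmod 2$. Summing over $w$ yields the claimed identity, so the obstruction vanishes and Theorem \ref{bigthm:reduction} produces the required $\alpha$-stable unimodular $\cO_K$-lattice in $(E, b_\lambda)$.
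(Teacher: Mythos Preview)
Your proof is correct and follows the same approach as the paper: decompose $E_0$ into field factors, kill the split and unramified contributions to $\partial$ by choice of $\lambda$ (or neutrality), and use Proposition~\ref{prop:even-case} together with Lemma~\ref{lemma:S-at-one} at the ramified places. The paper's proof is terser at the final step, simply invoking Lemma~\ref{lemma:S-at-one} to conclude; your explicit verification that the split and unramified factors $S_w$ contribute evenly to $v_K(S(1))+v_K(S(-1))$ spells out what the paper leaves implicit.
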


\begin{proof}
The algebra $E_0$ decomposes as a product of fields $E_0 = \prod_{w\in \cS} E_{0,w}$. For every $w\in \cS$, the algebra $E_w := E \otimes_{E_0} E_{0,w} $ is a quadratic $E_{0,w}$-algebra of exactly one of the following types:
\begin{enumerate}
\item[(sp)] $E_w=E_{0,w}\times E_{0,w}$ 
\item[(un)] $E_w$ is an unramified quadratic extension of $E_{0,w}$
\item[($\pm$)] $E_w$ is a ramified quadratic extension of $E_{0,w}$
\end{enumerate}
This gives a partition $\cS = \cS_{\sp} \cup \cS_{\un} \cup \cS_{\pm}$. 

Now choose $\lambda= (\lambda_w)_w$ in $E_0^\times = \prod_w E_{0,w}^\times$ such that 
for every $w \in \cS_{\un}$ we have $\partial[ E_w, b_{\lambda_w}, \alpha ] =0$.
We have $\partial [E_w, b_{\lambda_w}, \rho_{\alpha_w} ]=0$ for $w\in \cS_{sp}$ and using proposition \ref{prop:even-case}  we find
\[
	\partial [ E,b_\lambda,\rho_\alpha ] = \sum_{w \in \cS_{\pm}} \partial [ E_w, b_{\lambda_w}, \rho_{\alpha_w} ]
	= \sum_{w\in \cS_{\pm}} [\ell_w : k ] \delta_w
\]
in $\bZ/2\bZ = W(k) \subset W_\bZ(k)$. Lemma \ref{lemma:S-at-one} then shows that $\partial [E,b_\lambda,\rho_\alpha ] = 0$, and hence by
Theorem \ref{bigthm:reduction} we conclude that $(E,b_\lambda,\rho_\alpha)$ contains a unimodular $\bZ$-stable lattice.
\end{proof}

\begin{remark}\label{rmk:freedom-of-lambda}
Note that the components $\lambda_w$ in the above proof can be chosen arbitrarily for $w \in \cS_{\sp}$ and $w \in \cS_{\pm}$. The only restriction concerns the places $w\in \cS_{un}$.
\end{remark}

\section{An intermezzo on $2$-adic lattices}

By itself, Theorem \ref{bigthm:reduction} does not say anything about the existence of a $G$-stable \emph{even} unimodular lattice in a $G$-bilinear form over $\bQ_2$. The aim of this section is to establish the following criterion, which will be used in the proof of Theorem \ref{bigthm:even-unimodular-charpol}.

\begin{theorem}\label{thm:evenness}
Let $G$ be a group and $(V,b,\rho)$ be a $G$-bilinear form over $\bQ_2$. Assume that $\rho(G) \subset \SO(V,b)$. Then $V$ contains a $G$-stable even unimodular lattice if and only if the following three conditions hold
\begin{enumerate}
\item $(V,b,\rho)$ contains a $G$-stable unimodular lattice,
\item $(V,b)$ contains an even unimodular lattice,
\item for every $g\in G$ we have $v_2 (\delta(\rho(g))) = 0$ in $\bZ/2\bZ$.
\end{enumerate}
\end{theorem}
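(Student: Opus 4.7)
For the forward direction, (i) and (ii) follow immediately from the existence of a $G$-stable even unimodular lattice $L \subset V$. For (iii) I would invoke the classical fact that every $g \in \SO(L)$ is a product of an even number of reflections $r_v$ in vectors $v \in L$ primitive in $L$, and that for such $v$ the evenness of $L$ forces $v_2(b(v,v)) = 1$; hence $\delta(g)$ is a product of an even number of units of odd valuation, so $v_2(\delta(g)) \equiv 0 \pmod{2}$.

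For the reverse direction, I would start with a $G$-stable unimodular lattice $\Lambda$ supplied by (i). If $\Lambda$ is even we are done, so assume not and form the $G$-stable even sublattice $\Lambda^{\mathrm{ev}} := \{x \in \Lambda : b(x,x) \in 2\bZ_2\}$ of index $2$ in $\Lambda$. Consider the discriminant group $Q := (\Lambda^{\mathrm{ev}})^\vee/\Lambda^{\mathrm{ev}}$, a $G$-module of order $4$ equipped with a nondegenerate bilinear form $\bar b$ valued in $\bQ_2/\bZ_2$ and a quadratic refinement $\bar q(x) := b(\tilde x, \tilde x)/2 \bmod \bZ_2$ (for any lift $\tilde x$). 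The three order-$2$ subgroups of $Q$ are in bijection with the three unimodular overlattices of $\Lambda^{\mathrm{ev}}$ inside $(\Lambda^{\mathrm{ev}})^\vee$, and such an overlattice is even if and only if the corresponding subgroup is $\bar q$-isotropic. The subgroup $\Lambda/\Lambda^{\mathrm{ev}}$ is not $\bar q$-isotropic (it is generated by an odd vector), while condition (ii), together with the classification of $2$-adic unimodular forms, will force $\bar q$ to have Arf invariant zero on $Q$, so that the remaining two subgroups $X_1, X_2 \subset Q$ are $\bar q$-isotropic and yield two even unimodular intermediate lattices $L_1, L_2$. The $G$-action preserves $\bar b$ and $\bar q$ and hence permutes $\{X_1, X_2\}$, giving a character $\chi \colon G \to \{\pm 1\}$. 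I would then show that $\chi(g) = (-1)^{v_2(\delta(\rho(g)))}$; under (iii) this character is trivial, each $L_i$ is $G$-stable, and the proof is complete.

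The main expected obstacle is the identification $\chi(g) = (-1)^{v_2(\delta(\rho(g)))}$. I would attack it by exhibiting an explicit isometry $\tau \in \rmO(V,b)$ exchanging $L_1$ and $L_2$ --- ideally realized as a reflection in a vector whose norm has odd $2$-adic valuation, so that $v_2(\delta(\tau)) = 1$ --- and then using Cartan--Dieudonn\'e to write any $\rho(g) \in \SO(V,b)$ as a product of reflections and match the spinor norm parity with the action on $Q$. A secondary delicate step is to reduce condition (ii) to the statement that $\bar q$ has Arf invariant zero; this amounts to showing that whenever $V$ contains some even unimodular $\bZ_2$-lattice at all, the specific chain $\Lambda^{\mathrm{ev}} \subset \cdot \subset (\Lambda^{\mathrm{ev}})^\vee$ must already contain one, which should follow from the very restricted list of $\rmO(V,b)$-orbits of unimodular lattices in a fixed $\bQ_2$-quadratic space.
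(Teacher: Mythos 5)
Your reverse direction is essentially the paper's argument: pass from a $G$-stable odd unimodular lattice $\Lambda_0$ to its even sublattice $\Lambda_0^{\mathrm{ev}}$, observe that exactly two of the three unimodular overlattices are even, and use (iii) to rule out the $G$-action swapping them. (The paper realizes $\Lambda_0$ explicitly as $H^{n-1}\oplus\mathrm{diag}(1,-1)$ via the classification and quotes G\"unther--Nebe, Lemma~4.4, for the identification of the swapping character with $g\mapsto v_2(\delta(\rho(g)))\bmod 2$; your Arf-invariant packaging of the discriminant form is a reasonable variant that would treat the discriminants $1$ and $-3$ uniformly, but the crux --- that identification --- is exactly the step you defer, so the plan stands or falls with it.)

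There is, however, a concrete error in your forward direction for (iii). It is \emph{not} true that a primitive vector $v$ in an even unimodular $\bZ_2$-lattice has $v_2(b(v,v))=1$: in the hyperbolic plane $H$ the vector $v=e+2f$ is primitive with $b(v,v)=4$. The valuation is forced to equal $1$ only if you additionally require the reflection $r_v$ to preserve the lattice (primitivity plus unimodularity give $b(v,L)=\bZ_2$, so $r_v(L)=L$ forces $v_2(b(v,v))\le 1$). But once you impose that, your appeal to Cartan--Dieudonn\'e becomes an appeal to the generation of $\SO(L)$ by \emph{lattice-preserving} symmetries, which over $\bZ_2$ is a genuinely nontrivial theorem (in general one must allow Eichler--Siegel transvections, and the rational Cartan--Dieudonn\'e decomposition gives you no control on whether the reflecting vectors stabilize $L$). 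The same difficulty resurfaces in your proposed proof of $\chi(g)=(-1)^{v_2(\delta(\rho(g)))}$: a product decomposition into reflections over $\bQ_2$ does not act on the discriminant group $Q$, so matching spinor-norm parity with the permutation of $\{X_1,X_2\}$ again requires integral generation. The paper sidesteps all of this by proving Proposition~\ref{prop:spinor-norm-on-even-unimodular} structurally: for $\Lambda$ even unimodular the spin cover extends to a short exact sequence of group schemes over $\Spec\bZ_2$, so $\delta(\SO(\Lambda))\subset\bZ_2^\times/(\bZ_2^\times)^2$ with no reflection decomposition needed. You should either adopt that argument (or cite G\"unther--Nebe, Lemma~4.3) or supply a reference for symmetry generation of unimodular $\bZ_2$-lattices; as written, this step does not go through.
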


Here $\delta\colon \SO(V,b) \to \bQ_2^\times/ (\bQ_2^\times)^2$ denotes the spinor norm. We will recall its definition in \S~\ref{subsec:spinor-norm}. 

\subsection{Classification}
We start by recalling some results on the classification of unimodular lattices over $\bZ_2$. 

\begin{proposition}\label{prop:2-adic-unicity}
Let $(V,b)$ be a bilinear form over $\bQ_2$ and let $\Lambda_1$ and $\Lambda_2$ be unimodular lattices in $V$. If $\Lambda_1$ and $\Lambda_2$ are either both even, or both odd, then there exists a $g\in \rmO(V,b)$ such that $g\Lambda_1=\Lambda_2$. 
\end{proposition}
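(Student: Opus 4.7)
The plan is to reduce the statement to the assertion that $\Lambda_1$ and $\Lambda_2$ are isometric as abstract $\bZ_2$-bilinear lattices. Any $\bZ_2$-linear isometry $\varphi\colon \Lambda_1 \isomto \Lambda_2$ extends by $\bQ_2$-linearity to an automorphism of $V = \bQ_2\Lambda_1 = \bQ_2\Lambda_2$, and this extension preserves $b$ automatically, yielding the required element $g\in\rmO(V,b)$ with $g\Lambda_1=\Lambda_2$. So from the outset the problem has no serious dependence on how $\Lambda_i$ sits inside $V$; it is a question about the abstract integral classification.

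The heart of the argument is therefore the classical classification of unimodular $\bZ_2$-lattices. Every such lattice admits an orthogonal decomposition into indecomposable summands of rank one or two: rank-one forms $\langle u\rangle$ with $u\in\bZ_2^\times$, which occur only in the odd case, together with copies of the hyperbolic plane $H$ and the twisted even plane $H'$ determined by $b(e,e)=b(f,f)=2$, $b(e,f)=1$. After Witt-style cancellation over $\bZ_2$, a unimodular $\bZ_2$-lattice $\Lambda$ is determined up to isometry by the isometry class of its rationalization $(\bQ_2\otimes\Lambda,b)$ as a $\bQ_2$-bilinear form, together with its parity (whether $b(x,x)\in 2\bZ_2$ for every $x\in\Lambda$). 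Both invariants coincide for $\Lambda_1$ and $\Lambda_2$ by hypothesis, so the two are isometric and the first paragraph finishes the proof.

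The main obstacle is the classification itself, which is standard but not short to derive from scratch; the cleanest route is to cite a standard reference, for example Kneser's \emph{Quadratische Formen} or the relevant chapter of O'Meara's \emph{Introduction to Quadratic Forms}. A point worth double-checking before citing is that the standard classifications usually list rank, determinant, Hasse--Witt invariant, and parity as a complete set of invariants; the first three are by construction invariants of the rationalization $(V,b)$, so in our setting they are automatically shared by $\Lambda_1$ and $\Lambda_2$, leaving parity as the only remaining discrepancy to rule out by assumption.
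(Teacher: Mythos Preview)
Your proposal is correct and follows essentially the same route as the paper: both reduce the question to the abstract isometry of $\Lambda_1$ and $\Lambda_2$ as unimodular $\bZ_2$-lattices and then invoke the standard local classification (the paper cites O'Meara, Theorem~93:29, directly). Your additional remarks spelling out which invariants are shared and why parity is the only extra datum are accurate and make explicit what the paper leaves implicit in its one-line citation.
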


\begin{proof}Follows from \cite[Thm.~93.29]{OMeara73}.
\end{proof}

Denote by $H$ and $N$ the module $\bZ_2\oplus \bZ_2$ equipped with the bilinear forms given in terms of Gram matrices by
\[
	b_H :=  \left(\begin{array}{cc}0 & 1 \\1 & 0\end{array}\right), \quad
	b_N := \left(\begin{array}{cc}2 & -1 \\-1 & 2\end{array}\right).
\]

\begin{proposition}\label{prop:2-adic-classification}
Every even unimodular $\bZ_2$-lattice is either of the form
$H^n$ or of the form $N \oplus H^{n-1}$.
\end{proposition}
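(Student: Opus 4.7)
My strategy is to decompose $\Lambda$ orthogonally into rank-$2$ even unimodular blocks, classify those blocks, and then use a single lattice identity $N\oplus N\cong H\oplus H$ to reduce to the stated normal form.

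\emph{Step 1: splitting off a rank-$2$ summand.} For any primitive $x\in\Lambda$, the identification $\Lambda=\Lambda^\vee$ forces the functional $b(x,-)\in\Lambda^\vee$ to be primitive, so some $y\in\Lambda$ satisfies $b(x,y)\in\bZ_2^\times$. The sublattice $M:=\bZ_2 x+\bZ_2 y$ has Gram $\bigl(\begin{smallmatrix}b(x,x) & b(x,y)\\ b(x,y) & b(y,y)\end{smallmatrix}\bigr)$ with diagonal in $2\bZ_2$ and unit off-diagonal, so $\det M\in\bZ_2^\times$; thus $M$ is rank-$2$, even, and unimodular, and is therefore an orthogonal direct summand $\Lambda=M\oplus M^\perp$ with $M^\perp$ again even unimodular. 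Iterating produces a decomposition $\Lambda\cong L_1\oplus\cdots\oplus L_n$ into rank-$2$ even unimodular $\bZ_2$-lattices.

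\emph{Step 2: classifying rank $2$.} For a rank-$2$ even unimodular lattice with Gram $\bigl(\begin{smallmatrix}2a & c\\ c & 2b\end{smallmatrix}\bigr)$, necessarily $c\in\bZ_2^\times$, and the discriminant $c^2-4ab$ is congruent to $1-4ab/c^2\in 1+4\bZ_2$ modulo squares, hence lies in one of the two square classes represented by $1$ and $5$ in $\bQ_2^\times/(\bQ_2^\times)^2$. Diagonalizing over $\bQ_2$ and computing the Hasse--Witt invariant via a Hilbert-symbol calculation shows that discriminant $1$ forces Hasse $+1$ (so $\Lambda\otimes\bQ_2\cong V_H$) while discriminant $5$ forces Hasse $-1$ (so $\Lambda\otimes\bQ_2\cong V_N$). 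Proposition~\ref{prop:2-adic-unicity} then identifies $\Lambda$ with $H$ or $N$, respectively.

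\emph{Step 3: reducing to at most one copy of $N$.} Combining steps 1 and 2 yields $\Lambda\cong L_1\oplus\cdots\oplus L_n$ with each $L_i\in\{H,N\}$, so it remains to establish the lattice identity $N\oplus N\cong H\oplus H$. Over $\bQ_2$ one has $V_N\cong\langle 2,6\rangle$ and $V_H\cong\langle 1,-1\rangle$, and a direct Hilbert-symbol computation (using the relations $(2,3)=(3,-1)=-1$ and $(a,-a)=1$) shows that $V_N\perp V_N$ and $V_H\perp V_H$ both have discriminant $1$ and Hasse--Witt invariant $-1$, hence are isometric as $\bQ_2$-quadratic spaces. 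Proposition~\ref{prop:2-adic-unicity} applied in this common space then gives the desired isometry of $\bZ_2$-lattices, and repeated replacement of pairs of $N$-summands by pairs of $H$-summands brings the decomposition into the form $H^n$ or $N\oplus H^{n-1}$.

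\emph{Main obstacle.} The only computation of real substance is the Hilbert-symbol verification in step 3 that $V_N\perp V_N$ and $V_H\perp V_H$ are isometric over $\bQ_2$; everything else is elementary once Proposition~\ref{prop:2-adic-unicity} is available.
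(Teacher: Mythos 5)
Your argument is correct, but it is worth noting that the paper does not actually prove this proposition: it simply cites Belolipetsky--Gan and Kneser, so your self-contained derivation (split off binary even unimodular summands, classify them via discriminant and Hasse--Witt invariant plus Proposition~\ref{prop:2-adic-unicity}, then use $N\oplus N\cong H\oplus H$) is a genuinely different and more elementary route, at the cost of a couple of computations the cited references package away. Two small points deserve an explicit line. First, in Step 1 you should record why a unimodular sublattice $M\subset\Lambda$ is an orthogonal direct summand (for $z\in\Lambda$ the functional $b(z,-)|_M$ lies in $M^\vee=M$, so $z$ differs from an element of $M$ by an element of $M^\perp$); this is standard but is the hinge of the induction. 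Second, in Step 2 the discriminant-$1$ case needs no Hasse--Witt computation at all, since a binary $\bQ_2$-space of trivial discriminant is automatically hyperbolic, whereas the discriminant-$5$ case genuinely requires the computation you only allude to: writing the space as $\langle 2a,\,2au\rangle$ with $a\in\bZ_2^\times$ and $u\equiv 3\bmod 8$, one gets Hasse invariant $(2a,-u)_2=-1$, which is what rules out the Hasse-$(+1)$ space of the same discriminant. With those two sentences supplied, the proof is complete and compatible with the paper's use of the result.
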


\begin{proof}See \cite[Prop.~5.2]{BelolipetskyGan05} or \cite[Satz~15.6]{Kneser02}
\end{proof}

We have $\disc H^n=1$ and $\disc (N \oplus H^{n-1})=-3$ in $\bZ_2^\times/(\bZ_2^\times)^2$.


\begin{corollary}\label{cor:suitable-HW}
For every $n>0$ and $d \in \{1,-3\} \subset \bQ_2^\times/(\bQ_2^\times)^2$ there is a unique $\epsilon \in \{\pm 1\} = (\Br \bQ_2)[2]$ such that the unique bilinear form $V$ over $\bQ_2$ of dimension $2n$, discriminant $d$ and Hasse-Witt invariant $\epsilon$ contains an even unimodular lattice.\qed
\end{corollary}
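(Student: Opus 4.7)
The proof plan is essentially a direct bookkeeping consequence of Proposition~\ref{prop:2-adic-classification}, combined with the standard fact that bilinear forms over $\bQ_2$ are classified up to isomorphism by their dimension, discriminant, and Hasse-Witt invariant.

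First I would observe that, by Proposition~\ref{prop:2-adic-classification}, there are (up to isomorphism) exactly two even unimodular $\bZ_2$-lattices of rank $2n$, namely $H^n$ and $N\oplus H^{n-1}$, whose discriminants are $1$ and $-3$ respectively. In particular, if $\Lambda$ is any even unimodular $\bZ_2$-lattice of rank $2n$ with $\disc \Lambda = d \in \{1,-3\}$, then $d$ already determines $\Lambda$ up to isomorphism.

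Next, for each $d\in\{1,-3\}$, form $V_d := \bQ_2\otimes_{\bZ_2}\Lambda_d$, where $\Lambda_1 := H^n$ and $\Lambda_{-3} := N\oplus H^{n-1}$. Let $\epsilon_d \in \{\pm 1\}$ denote the Hasse-Witt invariant of $V_d$. This proves existence of a form of dimension $2n$, discriminant $d$, Hasse-Witt invariant $\epsilon_d$ containing an even unimodular lattice (there is no need to compute $\epsilon_d$ explicitly, as the statement only asserts existence and uniqueness of such an $\epsilon$).

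For uniqueness, suppose $V$ is a bilinear form over $\bQ_2$ of dimension $2n$, discriminant $d\in\{1,-3\}$, and Hasse-Witt invariant $\epsilon$ that contains some even unimodular $\bZ_2$-lattice $\Lambda$. Then $\Lambda$ has rank $2n$ and $\disc \Lambda = d$, so by the first step $\Lambda\cong \Lambda_d$ and consequently $V\cong V_d$. Hence $\epsilon = \epsilon_d$. Conversely, the classification of non-degenerate symmetric bilinear forms over the local field $\bQ_2$ (by dimension, discriminant, and Hasse-Witt invariant) implies that there is at most one isomorphism class of $V$ with the given invariants, and we have just shown that only $\epsilon = \epsilon_d$ can arise. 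This completes the proof.

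There is no real obstacle here: the result is an immediate repackaging of Proposition~\ref{prop:2-adic-classification}. The only mild point is to invoke the correct classification of $\bQ_2$-forms (for which one can cite \cite[\S~63]{OMeara73}) to deduce that the Hasse-Witt invariant of $V_d$ is well-defined independently of the choice of $\Lambda_d$ in its isomorphism class.
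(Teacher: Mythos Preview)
Your argument is correct and is exactly the intended one: the paper marks this corollary with \qed\ and gives no proof, treating it as an immediate consequence of Proposition~\ref{prop:2-adic-classification} together with the classification of forms over $\bQ_2$ by dimension, discriminant, and Hasse--Witt invariant. You have simply made explicit the two-line bookkeeping the authors left to the reader.
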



\subsection{The spinor norm}\label{subsec:spinor-norm}

If $(V,b)$ is a bilinear form over a field $K$ of characteristic different from $2$, then the short exact sequence
\[
	1 \longto \{\pm1\} \longto \Spin(V,b) \longto \SO(V,b) \longto 1
\]
induces a morphism
\[
	\delta\colon \SO(V,b) \to \rH^1(K,\{\pm 1\}) = K^\times/(K^\times)^2
\]
called the \emph{spinor norm}.

\begin{theorem}[Zassenhaus formula]\label{thm:zassenhaus}
Let $K$ be a field of characteristic different from $2$. Let $(V,b)$ be a bilinear form of dimension $n$ over $K$, let $\alpha \in \SO(V)$. Let $V_0\subset V$ be the maximal subspace on which $1+\alpha$ is nilpotent, and $V_1$ its orthogonal complement. Then
\[
	\delta(\alpha) = \det (b_{|V_0} ) \cdot \det( \tfrac{1+\alpha}{2}, V_1 )
\]
in $ K^\times /(K^\times)^2$.
\end{theorem}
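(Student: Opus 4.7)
The plan is to compute $\delta(\alpha)$ by splitting $V$ into the orthogonal, $\alpha$-stable pieces $V_0$ and $V_1$ and using multiplicativity of the spinor norm. First I would verify the splitting: since $\alpha\in \rmO(V,b)$ satisfies $b(\alpha x,\alpha y)=b(x,y)$, generalised eigenspaces of $\alpha$ (over a splitting field) for eigenvalues $\mu,\nu$ are $b$-orthogonal unless $\mu\nu = 1$. Hence the generalised $(-1)$-eigenspace $V_0$ is self-paired, its complement $V_1$ is the sum of the other generalised eigenspaces, both are $\alpha$-stable, and $b$ restricts non-degenerately to each. Eigenvalues of $\alpha|_{V_1}$ other than $1$ come in pairs $\{\mu,\mu^{-1}\}$, so $\det(\alpha|_{V_1})=1$; combined with $\alpha\in \SO(V)$ this forces $\dim V_0$ to be even and $\alpha|_{V_0},\alpha|_{V_1}\in \SO$ of the respective summands. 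Multiplicativity of the spinor norm under such orthogonal decompositions then gives $\delta(\alpha)=\delta(\alpha|_{V_0})\cdot\delta(\alpha|_{V_1})$.

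Next I would compute $\delta(\alpha|_{V_0})$. Since $-1$ is the only eigenvalue of $\alpha|_{V_0}$, the element $u:=-\alpha|_{V_0}\in\SO(V_0)$ is unipotent, so $u=\exp(N)$ for a nilpotent $N\in\mathfrak{so}(V_0)$ (the exponential being a polynomial on nilpotents). Lifting $N$ to $\mathfrak{spin}(V_0)$ through the canonical isomorphism of Lie algebras produces a lift of $u$ in $\Spin$, so $\delta(u)=1$. Taking an orthogonal basis $e_1,\dots,e_m$ of $V_0$, the identity $-\id_{V_0}=\tau_{e_1}\cdots\tau_{e_m}$ gives $\delta(-\id_{V_0})=\prod_i b(e_i,e_i)=\det(b|_{V_0})$ in $K^\times/(K^\times)^2$, and therefore $\delta(\alpha|_{V_0})=\det(b|_{V_0})$.

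Finally I would compute $\delta(\alpha_1)$ for $\alpha_1:=\alpha|_{V_1}$. Since $1+\alpha_1$ is invertible, its Cayley transform $S:=(1-\alpha_1)(1+\alpha_1)^{-1}$ is a well-defined $b$-skew endomorphism, and the identity $1+S=2(1+\alpha_1)^{-1}$ gives $\det(1+S)\cdot\det\bigl(\tfrac{1+\alpha_1}{2},V_1\bigr)=1$, so the two determinants coincide in $K^\times/(K^\times)^2$. The task therefore reduces to the classical Cayley form of the spinor norm formula,
\[
\delta(\alpha_1)=\det(1+S)\pmod{(K^\times)^2},
\]
valid whenever $1+\alpha_1$ is invertible. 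I would prove this by induction on $\dim V_1$: pick a vector $v$ with $w:=v-\alpha_1 v$ anisotropic, replace $\alpha_1$ by $\tau_w\alpha_1$ (which fixes an extra vector and so has strictly smaller effective dimension), use $\delta(\tau_w)=b(w,w)$ together with a direct calculation to identify how $S$ changes, and invoke the inductive hypothesis.

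The main obstacle is the Cayley identity $\delta(\alpha_1)=\det(1+S)$; everything else is essentially formal given standard facts about the spin double cover and reflections. The delicate points in the induction are ensuring that $v$ can be chosen so $w$ is anisotropic and that the replacement $\tau_w\alpha_1$ still has no eigenvalue $-1$, and then tracking the multiplicative change of $\det(1+S)$ under multiplication by a reflection. Once this is in hand, the three ingredients combine to yield exactly $\delta(\alpha)=\det(b|_{V_0})\cdot\det\bigl(\tfrac{1+\alpha}{2},V_1\bigr)$.
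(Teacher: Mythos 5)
The paper itself offers no proof of this theorem beyond the citation to Zassenhaus and to Conrad's Theorem~C.5.7, so any genuine argument you give is necessarily "different from the paper's"; in fact your outline reconstructs the classical route of those references. Your reductions are sound: the orthogonality of generalized eigenspaces for eigenvalues $\mu,\nu$ with $\mu\nu\neq 1$ does give the $\alpha$-stable orthogonal splitting $V=V_0\perp V_1$ with $\dim V_0$ even, the spinor norm is multiplicative over such a splitting, $\delta(-\id_{V_0})=\det(b|_{V_0})$ by writing $-\id$ as a product of reflections in an orthogonal basis, and the Cayley identity $1+S=2(1+\alpha_1)^{-1}$ correctly converts $\det(1+S)$ into $\det(\tfrac{1+\alpha_1}{2})$ modulo squares. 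One repairable slip: in odd positive characteristic the series for $\exp$ and $\log$ may involve division by the characteristic, so "unipotent implies $\exp$ of a nilpotent element of the Lie algebra" is not available in general; the conclusion $\delta(u)=1$ still holds because a unipotent element then has odd order $p^k$ and $\delta$ takes values in a group of exponent $2$ (in characteristic $0$ your argument works, as $u$ is the square of $\exp(\tfrac12\log u)\in\SO(V_0)$).

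The genuine gap is exactly where you locate it: the identity $\delta(\alpha_1)=\det(1+S)$ is asserted, not proved, and the induction you sketch does not close. Of the two delicate points you flag, the first is actually harmless: if no $v$ has $w=v-\alpha_1 v$ anisotropic, then $\im(1-\alpha_1)$ is totally isotropic, which forces $\alpha_1$ to be unipotent (it acts trivially on $V_1/W^\perp$, $W^\perp/W$ and $W$ for $W=\im(1-\alpha_1)$), and that case is settled directly since both sides are trivial. The second point, however, is a real obstruction: after replacing $\alpha_1$ by $\tau_w\alpha_1$, the new isometry can perfectly well acquire the eigenvalue $-1$, in which case $\det(1+\tau_w\alpha_1)=0$, the Cayley transform of $\tau_w\alpha_1$ does not exist, and the inductive hypothesis in the form you state it simply does not apply — there is no "change of $\det(1+S)$" to track. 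This is precisely why the classical proofs are not short: one must either prove the full Zassenhaus statement (with the $V_0$-factor built in) and induct on that, or pass to Wall's bilinear form $(x,y)\mapsto b\big((1-\alpha)^{-1}x,y\big)$ on $\im(1-\alpha)$, whose discriminant computes $\delta(\alpha)$ without any invertibility hypothesis on $1+\alpha$. Until that lemma is established by one of these routes, the central step of your argument is missing.
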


\begin{proof}See \cite{Zassenhaus62} or \cite[Thm.~C.5.7]{Conrad14}.
\end{proof}

\begin{proposition}\label{prop:spinor-norm-on-even-unimodular}
Let $(V,b)$ be a bilinear form over $\bQ_2$. Assume that $\alpha\in \SO(V,b)$ stabilizes an even unimodular $\bZ_2$-lattice $\Lambda \subset V$. Then $v_2(\delta(\alpha))\equiv 0 \bmod 2$.
\end{proposition}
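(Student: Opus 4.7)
Plan. The proof combines Zassenhaus's formula (Theorem~\ref{thm:zassenhaus}) with a Cartan--Dieudonn\'e-type decomposition of $\alpha$ into reflections preserving $\Lambda$. For a single reflection $\tau_v$, Zassenhaus specialises to $\delta(\tau_v) = b(v,v)$ in $\bQ_2^\times/(\bQ_2^\times)^2$, so it suffices to analyse the reflections of $V$ that stabilise $\Lambda$.

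The first step is to classify such reflections. Given $v \in V$ with $b(v,v) \neq 0$, we may rescale so that $v \in \Lambda$ is primitive. The identity $\tau_v(x) = x - \frac{2 b(x,v)}{b(v,v)} v$ shows $\tau_v(\Lambda) \subset \Lambda$ iff $\frac{2 b(\Lambda, v)}{b(v,v)} \subset \bZ_2$. Since $\Lambda$ is unimodular, $b(\Lambda, v) = \bZ_2$ for primitive $v$, forcing $\frac{2}{b(v,v)} \in \bZ_2$, i.e.\ $v_2(b(v,v)) \leq 1$; since $\Lambda$ is even, $v_2(b(v,v)) \geq 1$. The ``good'' reflections in $\rmO(\Lambda)$ are therefore exactly the $\tau_v$ with $v \in \Lambda$ primitive and $b(v,v) = 2u$, $u \in \bZ_2^\times$; each has $\delta(\tau_v) = 2u$ and hence $v_2(\delta(\tau_v)) = 1$.

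The main step is a Cartan--Dieudonn\'e-type result for even unimodular $\bZ_2$-lattices: the group $\rmO(\Lambda)$ is generated by good reflections. This is classical (Wall, Kneser); a self-contained proof proceeds by induction on $\rk\Lambda$ using the classification of Proposition~\ref{prop:2-adic-classification}, with the base cases $\Lambda = H$ and $\Lambda = N$ verified directly, and the inductive step finding a good reflection $\tau_w$ such that $\alpha \tau_w$ fixes a distinguished vector and hence restricts to an isometry of a rank-$2$ smaller even unimodular sublattice. Granted this, write $\alpha = \tau_{v_1} \cdots \tau_{v_m}$ with each $v_i$ good. Since $\det \tau_{v_i} = -1$ and $\det \alpha = 1$, the number $m$ is even, and
\[
	\delta(\alpha) \;=\; \prod_{i=1}^m 2 u_i \;=\; 2^m \prod_i u_i \;\equiv\; \prod_i u_i \pmod{(\bQ_2^\times)^2}
\]
lies in $\bZ_2^\times/(\bZ_2^\times)^2$, so $v_2(\delta(\alpha)) \equiv 0 \pmod 2$. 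The main obstacle is the Cartan--Dieudonn\'e generation step; everything else is immediate from Zassenhaus and the characterisation of good reflections.
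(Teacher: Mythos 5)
Your reduction to reflections is set up correctly: rescaling so that $v\in\Lambda$ is primitive, the identity $\tau_v(x)=x-\tfrac{2b(x,v)}{b(v,v)}v$ together with unimodularity and evenness shows that the reflections preserving $\Lambda$ are exactly those with $b(v,v)=2u$, $u\in\bZ_2^\times$, and Zassenhaus gives $\delta(\tau_v)=b(v,v)=2u$. The gap is the ``main step'': the group $\rmO(\Lambda)$ of an even unimodular $\bZ_2$-lattice is \emph{not} in general generated by these good reflections, and the inductive Witt-type argument you sketch breaks down already for $\Lambda=H\perp H$. Indeed, a good reflection $\tau_v$ reduces modulo $2$ to the transvection $x\mapsto x+b(x,v)\bar v$ in the anisotropic vector $\bar v$ of the quadratic $\bF_2$-space $(\Lambda/2\Lambda,\,q\bmod 2)$ with $q=b(x,x)/2$, and one runs into the classical exception to Dickson's theorem: the transvections of $\rmO_4^+(\bF_2)$ generate a subgroup of index $2$. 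Concretely, the six anisotropic vectors of $H\perp H$ over $\bF_2$ split into two mutually orthogonal triples, each closed under addition, and each transvection fixes one triple pointwise while permuting the other; so the transvections generate $S_3\times S_3$ of order $36$ inside the group of order $72$, and this subgroup preserves the partition into the two triples. Since the orthogonal group scheme of the regular quadratic form $(\Lambda,q)$ over $\bZ_2$ is smooth, the reduction $\rmO(\Lambda)\to\rmO(\Lambda/2\Lambda)$ is surjective, so the good reflections generate a proper subgroup of $\rmO(H\perp H)$. In particular the primitive vectors $e_1+f_1$ and $e_1+f_1+e_2$ both have norm $2$ but reduce into different triples, so no product of good reflections carries one to the other, and the step ``find a good reflection $\tau_w$ so that $\alpha\tau_w$ fixes a distinguished vector'' cannot be carried out.

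The strategy can be repaired by enlarging the generating set with the Eichler--Siegel transvections $E_{u,z}$ ($u$ isotropic primitive, $z\in u^\perp$): these preserve $\Lambda$, lie in $\SO$, and have trivial spinor norm, and symmetries together with Eichler transformations do generate $\rmO(\Lambda)$ (Wall, Kneser); your parity computation then goes through unchanged. But that generation theorem is itself a nontrivial input that you would need to prove or cite precisely. The paper sidesteps all of this: because $q=b(x,x)/2$ is a regular quadratic form on $\Lambda$ over $\bZ_2$, the spin exact sequence extends to an fppf-exact sequence $1\to\mu_2\to\Spin(\Lambda)\to\SO(\Lambda)\to 1$ of group schemes over $\Spec\bZ_2$, so the spinor norm of any $\alpha\in\SO(\Lambda)$ lands in $\rH^1(\bZ_2,\mu_2)=\bZ_2^\times/(\bZ_2^\times)^2$, which is the assertion.
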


\begin{proof}
Since $\Lambda$ is even and unimodular, the quadratic form $q(x):=b(x,x)/2$ on the $\bZ_2$-module $\Lambda$ is non-degenerate (or regular), and the short exact sequence of the spin cover (over $\bQ_2$) extends to a short exact sequence
\[
	1 \longto \mu_{2}  \longto \Spin(\Lambda) \longto \SO(\Lambda) \longto 1
\]
of group schemes over $\Spec \bZ_2$, exact in the fppf topology, see \cite[\S~C.4]{Conrad14}. It follows that $\delta(\alpha) \in \bZ_2^\times/(\bZ_2^\times)^2$ for all $\alpha \in \SO(\Lambda)$, as we had to show. See \cite[Lemma~4.3]{GuntherNebe09} for an alternative proof.
\end{proof}

\subsection{Existence of invariant even unimodular lattices}
We are now ready to prove our criterion.

\begin{proof}[Proof of Theorem \ref{thm:evenness}]
It is clear that (i) and (ii) are necessary, and Proposition \ref{prop:spinor-norm-on-even-unimodular} shows that also (iii) is necessary. So we are left with showing that (i)--(iii) imply the existence of a $G$-stable even unimodular lattice.

This follows from a more or less straightforward calculation, based on \cite[\S~5]{BelolipetskyGan05}. We give the argument in case $\disc b =1$. The case $\disc b = -3$ is similar (and will not be used in the proof of Theorem \ref{bigthm:even-unimodular-charpol}).

Let $\Lambda_0$ be a $G$-stable unimodular lattice. If $\Lambda_0$ is even, then we are done. If not, then by Propositions \ref{prop:2-adic-unicity} and \ref{prop:2-adic-classification},  
we may assume without loss of generality that 
\[
	\Lambda_0 =H^{n-1} \oplus U,
\]
where $U$ is the rank $2$ module $\bZ_2\oplus \bZ_2$ equipped with the odd unimodular form 
\[
	b_U :=  \left(\begin{array}{cc}1 & 0 \\0 & -1\end{array}\right).
\]
If $(e,f)$ denotes the standard basis of $U$, then $\bQ_2\otimes \Lambda_0$ contains 
the even unimodular lattices
\[
	\Lambda_1 = H^{n-1} \oplus \langle e+f, \frac{e-f}{2} \rangle,
\]
and
\[
	\Lambda_2 = H^{n-1} \oplus \langle e-f, \frac{e+f}{2} \rangle,
\]
of discriminant $1$. 

Note that the lattice 
\[
	\Lambda := \{ x \in \Lambda_0 \mid b(x,x) \in 2\bZ_2 \} = H^{n-1} \oplus \langle e+f, e-f \rangle
\]
is preserved by $\SO(\Lambda_0)$. 
There are precisely three unimodular lattices containing $\Lambda$: the odd lattice $\Lambda_0$, and the even lattices $\Lambda_1$ and $\Lambda_2$. It follows that the group 
$\SO(\Lambda_0)$ acts on the set $\{\Lambda_1,\Lambda_2\}$. 

By \cite[Lemma~4.4]{GuntherNebe09} the action on this two-element set is given by the homomorphism
\[
	\SO(\Lambda_0) \to \bZ/2\bZ,\, \alpha \mapsto v_2(\delta(\alpha)) \bmod 2,
\]
which shows that under the hypothesis of the theorem, $\Lambda_1$ and $\Lambda_2$ are indeed $G$-stable. (The argument in \emph{loc.~cit.}~works for $n>1$. For $n=1$, a direct  computation shows that for every $\alpha\in \SO(\Lambda_0)$ we have $\alpha\Lambda_1=\Lambda_1$ and $v_2(\delta(\alpha))\equiv 0$).
\end{proof}

\begin{remark}
It is likely that a result similar to Theorem \ref{thm:evenness} holds over unramified extensions of $\bQ_2$. However, if $K$ is a ramified extension of $\bQ_2$, then the classification of  unimodular $\cO_K$-lattices becomes more complicated (see \cite[\S~93]{OMeara73}), and it is not clear if one should expect such a simple criterion.
\end{remark}


\section{Even unimodular lattices in $\bZ$-bilinear forms over \texorpdfstring{$\bQ_2$}{Q\_2}}

Using Theorem \ref{thm:evenness}, we can now refine Proposition \ref{prop:local-unimodular-existence-at-2} to obtain the existence of an invariant even unimodular lattice.

As in \S~\ref{sec:unimodular-in-local}, let $E_0$ be an \'etale $\bQ_2$-algebra of rank $n$, let $E$ be an \'etale $E_0$-algebra, free of rank $2$ over $E_0$. Denote by $\sigma$ the involution of $E$ fixing $E_0$. Let $\alpha \in E^\times$ satisfy $\alpha \sigma(\alpha)=1$, and denote by $S$ the characteristic polynomial of $\alpha$ over $\bQ_2$.

\begin{proposition}\label{prop:at-2}
Assume that $S(1)$ and $S(-1)$ are non-zero, that $v_2(S(1))$ and $v_2(S(-1))$ are even, and that the class of $(-1)^n S(1)S(-1)$ in $\bQ_2^\times / (\bQ_2^\times)^2$ lies in $\{1,-3\}$. Then there exists a $\lambda \in \mu(E,\sigma)$ and an even unimodular $\bZ_2$-lattice $\Lambda \subset (E,b_\lambda)$ stable under $\alpha$.
\end{proposition}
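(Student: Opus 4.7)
The strategy is to apply Theorem~\ref{thm:evenness} to $(E,b_\lambda,\rho_\alpha)$ for a judiciously chosen $\lambda\in\mu(E,\sigma)$, verifying its three conditions in turn. The hypothesis on $v_2(S(\pm 1))$ implies the parity assumption of Proposition~\ref{prop:local-unimodular-existence-at-2}, so that proposition furnishes some $\lambda$ for which $(E,b_\lambda,\rho_\alpha)$ contains an $\alpha$-stable unimodular $\bZ_2$-lattice, giving condition~(i). By Remark~\ref{rmk:freedom-of-lambda} we retain the freedom to modify the local components $\lambda_w$ at each split or ramified place without losing~(i), and this freedom will be used to arrange condition~(ii).

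I would first verify condition~(iii). Since $S(-1)\neq 0$, the operator $1+\alpha$ is invertible on $E$, so in Theorem~\ref{thm:zassenhaus} the subspace $V_0$ vanishes and $V_1=E$. Hence
\[
\delta(\alpha) \;=\; \det\bigl(\tfrac{1+\alpha}{2},\,E\bigr) \;=\; S(-1)/2^{2n} \;\equiv\; S(-1) \pmod{(\bQ_2^\times)^2},
\]
using $\dim_{\bQ_2}E=2n$ and $\det(1+\alpha,E)=\prod(1+\alpha_i)=S(-1)$. The hypothesis $v_2(S(-1))\in 2\bZ$ then gives $v_2(\delta(\alpha))\equiv 0\bmod 2$; since $v_2\circ\delta$ is a homomorphism and $\rho_\alpha(\bZ)$ is generated by $\alpha$, condition~(iii) holds for all $g\in\bZ$.

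For condition~(ii)---the existence of \emph{some} even unimodular $\bZ_2$-lattice in $(E,b_\lambda)$---Corollary~\ref{cor:suitable-HW} splits the requirement into (a) $\disc b_\lambda\in\{1,-3\}$ modulo squares, and (b) $\epsilon(b_\lambda)$ taking the unique compatible value $\epsilon^\ast\in\Br\bQ_2$. The discriminant is independent of $\lambda$: from $b_\lambda(x,y)=b_1(\lambda x,y)$ one has $\det b_\lambda=\Nm_{E/\bQ_2}(\lambda)\cdot\det b_1=\Nm_{E_0/\bQ_2}(\lambda)^2\cdot\det b_1$. Fix $\tau:=\alpha-\alpha^{-1}\in E^\times$, a unit since $S(\pm 1)\neq 0$; it satisfies $\sigma(\tau)=-\tau$. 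In the $\bQ_2$-basis $\{e_1,\ldots,e_n,\tau e_1,\ldots,\tau e_n\}$ of $E$ built from a basis $\{e_i\}$ of $E_0$, the Gram matrix of $b_1$ is block-diagonal, and a direct computation yields $\disc b_1\equiv(-1)^n\Nm_{E/\bQ_2}(\tau)\pmod{\text{squares}}$. Combined with the identity $\Nm_{E/\bQ_2}(\alpha-\alpha^{-1})=\det(1-\alpha,E)\cdot\det(1+\alpha,E)=S(1)S(-1)$ (using $\Nm_{E/\bQ_2}(\alpha)=1$), this gives $\disc b_\lambda\equiv(-1)^n S(1)S(-1)$, which lies in $\{1,-3\}$ by hypothesis.

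Arranging (b) is the main obstacle. By Proposition~\ref{prop:HW-of-twisted-form}, replacing $\lambda$ changes $\epsilon(b_\lambda)$ by $\Nm_{E_0/\bQ_2}\beta(\lambda)\in\Br\bQ_2=\bZ/2\bZ$. If $\cS_\pm\neq\emptyset$, then at any ramified place $w\in\cS_\pm$ the composition $\mu(E_w,\sigma)\xrightarrow{\beta}\Br E_{0,w}\xrightarrow{\Nm}\Br\bQ_2$ is nontrivial (both maps are isomorphisms on the relevant $2$-torsion), so toggling $\lambda_w$ toggles $\epsilon(b_\lambda)$, and the freedom from Remark~\ref{rmk:freedom-of-lambda} lets us reach $\epsilon^\ast$. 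The delicate remaining case is $\cS_\pm=\emptyset$: here condition~(i) rigidifies $\lambda$ modulo $\mu$, and one has to verify directly that the forced value of $\epsilon(b_\lambda)$ equals $\epsilon^\ast$ by combining the explicit description $\theta(\lambda_w)=v_{E_{0,w}}(\lambda_w)\bmod 2$ at the inert places with the constraint $v_{E_{0,w}}(\lambda_w)\equiv\delta_w\bmod 2$ imposed by~(i) and the individual evenness of $v_2(S(\pm 1))$.
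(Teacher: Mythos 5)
Your overall strategy coincides with the paper's: verify the three conditions of Theorem~\ref{thm:evenness}, getting (i) from Proposition~\ref{prop:local-unimodular-existence-at-2}, (iii) from the Zassenhaus formula (since $1+\alpha$ is invertible, $\delta(\alpha)\equiv S(-1)$ mod squares), and (ii) by adjusting the Hasse--Witt invariant via Proposition~\ref{prop:HW-of-twisted-form}, Corollary~\ref{cor:suitable-HW} and the freedom of Remark~\ref{rmk:freedom-of-lambda} at a ramified place. Your discriminant computation is correct and amounts to a proof of the cited \cite[Prop.~A.3]{GrossMcMullen02}. The case where some $E_w/E_{0,w}$ is ramified is handled exactly as in the paper.

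The genuine gap is the case $\cS_\pm=\emptyset$, which you correctly identify as delicate but do not resolve: you end by saying ``one has to verify directly that the forced value of $\epsilon(b_\lambda)$ equals $\epsilon^\ast$,'' which is a statement of the remaining problem, not a proof. Moreover the computation you sketch (tracking $v_{E_{0,w}}(\lambda_w)\equiv\delta_w$ at the inert places and assembling the resulting Hasse--Witt invariant) is considerably harder than necessary and its outcome is not obvious. The paper sidesteps the Hasse--Witt invariant entirely in this case: if $E$ is everywhere unramified over $E_0$, the trace $\tr_{E/E_0}$ is surjective, so there is $e\in E$ with $e+\sigma(e)=1$, and then for any $x$,
\[
	b_\lambda(x,x)=b_\lambda\bigl((e+\sigma(e))x,x\bigr)=b_\lambda(ex,x)+b_\lambda(x,ex)=2\,b_\lambda(ex,x),
\]
so \emph{every} lattice in $(E,b_\lambda)$ is even; in particular the $\alpha$-stable unimodular lattice produced by condition~(i) is already even, and condition~(ii) holds with no further choice of $\lambda$. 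Without this observation (or a completed version of the computation you propose), your argument does not establish the proposition when no place of $E_0$ over $2$ ramifies in $E$.
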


\begin{proof}
We will use Theorem \ref{thm:evenness}. By Proposition \ref{prop:local-unimodular-existence-at-2}, there is a $\lambda \in \mu(E,\sigma)$ such that the $\bZ$-bilinear form $(E,b_\lambda,\rho_\alpha)$ over $\bQ_2$ satisfies (i). Also, by the Zassenhaus formula, the condition on $v_2(S(-1))$ guarantees that  it satisfies (iii). 

By \cite[Prop.~A.3]{GrossMcMullen02} the bilinear form $(E,b_\lambda)$ has discriminant $1$ or $-3$ in $\bQ_2^\times/(\bQ_2^\times)^2$.

If $E$ is everywhere unramified over $E_0$, then the trace map $\tr\colon E\to E_0$ is surjective, and there is an $e\in E$ with $e+\sigma(e)=1$. It follows that any lattice in $E$ is even, since
\[
	b(x,x)=b((e+\sigma(e)x, x)=b(ex,x)+b(x,ex)=2b(ex,x).
\]
In particular, condition (ii) is satisfied. 

If there is a component $E_{0,w}$ such that $E_w/E_{0,w}$  is ramified, then using Proposition~\ref{prop:HW-of-twisted-form} and Corollary~\ref{cor:suitable-HW} we can modify the component $\lambda_w$ of $\lambda$ to guarantee the existence of an even unimodular lattice in $(E,b_\lambda,\rho_\alpha)$. By Remark \ref{rmk:freedom-of-lambda}, condition (i) in Theorem~\ref{thm:evenness} is not effected.
\end{proof}

\section{Unimodular lattices in $\bZ$-bilinear forms over $\bQ$}\label{sec:the-proof}

\begin{proof}[Proof of Theorem  \ref{bigthm:even-unimodular-charpol}]

Assume given integers $r$ and $s$ satisfying $r\equiv s \bmod 8$, a monic irreducible $P\in \bZ[t]$, and a power $S=P^N$  of degree $r+s$ satisfying \Creciprocal, \Creal, and \Csquares.  If $P$ is linear then $S=(t\pm1)^{r+s}$ and the theorem holds for trivial reasons. So from now on we assume that the degree of $P$ is at least $2$.

\medskip\noindent
\emph{The number fields $E_0$ and $E$}.
Note that \Creciprocal~implies that also $P$ is reciprocal.
Consider the field $F:=\bQ[t]/(P)$, and let $\alpha \in F$ be the image of $t$. Denote by $\sigma$ the involution of $F$ that maps $\alpha$ to $1/\alpha$. Since $\deg P \geq 2$, this involution is non-trivial. Denote by $F_0$ the fixed field of $\sigma_F$. Choose a field extension $E_0$ of $F_0$ of degree $N$, linearly disjoint from $F$. Then  $E:=E_0\otimes_{F_0} F$ is a field, and the characteristic polynomial of $\alpha \in E$ is $S$. Denote by $\sigma$ the canonical extension of $\sigma_F$ to $E$.

In everything what follows, $v$ will denote a place of $\bQ$ and $w$ a place of $E_0$. We will write  $E_{w}$ for the quadratic $E_{0,w}$-algebra $E \otimes_{E_0} E_{0,w}$. 

\medskip\noindent
\emph{Infinite places}.
Let $w$ be an infinite place of $E_0$  and denote by $\alpha_w$  the image of $\alpha$ in $E_{w}^\times$. Then one of the following three hold:
\begin{enumerate}
\item $E_{0,w} = \bR$, $E_w = \bR \times \bR$, and $\alpha_w=(\beta,1/\beta)$ with $\beta \in \bR^\times$ and $|\beta|\neq 1$,
\item $E_{0,w} = \bC$, $E_w = \bC \times \bC$, and $\alpha_w=(\beta,1/\beta)$ with $\beta \in \bC^\times \setminus \bR^\times$ and $|\beta| \neq 1$,
\item $E_{0,w} = \bR$, $E_w =\bC$, and $|\alpha_w| =  1$.
\end{enumerate}

In the first and second case $\mu(E_w,\sigma)$ is trivial, and the bilinear form $(E_w,b_\lambda)$ over $\bR$ has signature $(1,1)$ resp.~$(2,2)$ for all $\lambda \in E_{0,w}^\times$. In the third case $\mu(E_w,\sigma)=\{\pm 1\}$ and $(E_w,b_\lambda)$ has signature $(2,0)$ for $\lambda=1$ and $(0,2)$ for $\lambda=-1$.

As in the introduction to this paper, denote the number of roots $z$ of $S$ with $|z|>1$ by $m=m(S)$. Define
\[
	d_+ := \frac{r-m}{2} , \quad d_- := \frac{s-m}{2}.
\]
Condition \Creal~guarantees that these are non-negative integers. Note that there are exactly $d_+ + d_- = (r + s - 2m)/2$ infinite places of the third type in $E_0$. Choose $\lambda_\infty = (\lambda_w)_{w\mid \infty}$ in
\[
	\mu(\bR\otimes E,\sigma)  = \prod_{w\mid \infty} \mu(E_w,\sigma)
\]
with $d_+$ components $1$ and $d_-$ components $-1$ at places $w$ of the third type. Then $(\bR\otimes E, b_{\lambda_\infty})$ has signature $(r,s)$.

\medskip\noindent
\emph{Finite places}. For every prime number $p$ choose $\lambda_p = (\lambda_w)_{w\mid p}$ in
\[
	\mu(\bQ_p\otimes E, \sigma) = \prod_{w\mid p} \mu(E_w,\sigma)
\]
such that the bilinear form $(\bQ_p\otimes E, b_{\lambda_p})$ over $\bQ_p$ contains an even unimodular $\alpha$-stable $\bZ_p$-lattice. Condition \Csquares~and Propositions \ref{prop:at-p} and \ref{prop:at-2} guarantee that this is possible. Note that for all but finitely many $w$ we have $\lambda_w=1$ in $\mu(E_w,\sigma)$.

\medskip\noindent
\emph{Comparison with a standard lattice}. Let $\Lambda_{r,s}$ be an even unimodular lattice of signature $(r,s)$. The congruence $s\equiv r \bmod 8$ implies $\disc \Lambda_{r,s} = 1$. We claim that for every place $v$ of $\bQ$ we have
\[
	(\bQ_v \otimes E, b_{\lambda_v}) \cong \bQ_v \otimes \Lambda_{r,s},
\]
as bilinear forms over $\bQ_v$. For the infinite place this is clear. For $v=v_p$, note that both forms contain an even unimodular lattice, and that by \cite[Prop.~A.3]{GrossMcMullen02} they have discrimimant  $1$ 
 in $\bQ_p^\times/(\bQ_p^\times)^2$.  This implies that they must be isomorphic. For  $p=2$ this follows from Proposition \ref{prop:2-adic-classification}, and for $p\neq 2$ from \cite[92:1]{OMeara73}.

\medskip\noindent
\emph{Local-global obstruction}.
For every place $v$ of $\bQ$, we now have in $\Br \bQ_v$ the following three elements:
\begin{enumerate}
\item $\epsilon_v(\bQ_v \otimes E, b_{\lambda_v}) = \epsilon_v(\Lambda_{r,s}) $,
\item $\epsilon_v(\bQ_v \otimes E, b_1)$,
\item $\beta(\lambda_v) := \sum_{w|v} \Nm_{E_{0,w}/\bQ_v} \beta_w(\lambda_w)$,
\end{enumerate}
where $\beta_w$ is the map $\mu(E_w,\sigma) \to \Br E_{0,w}$ of Lemma \ref{lemma:mu-to-brauer}. By Proposition \ref{prop:HW-of-twisted-form} these Brauer classes are related by
\[
	\epsilon_v(\Lambda_{r,s}) = \epsilon_v(\bQ_v \otimes E, b_1) + \beta(\lambda_v).
\]
Since $\Lambda_{r,s}$ and $b_1$ are global objects, we have
\[
	\sum_v \inv_v\epsilon_v(\Lambda_{r,s}) = 0,\quad
	\sum_v  \inv_v\epsilon_v(\bQ_v \otimes E, b_1) = 0
\]
in $\bQ/\bZ$, and therefore also $\sum_v \inv_v\beta(\lambda_v) = 0$. Using the commutative square
\[
\begin{tikzcd}
(\Br E_{0,w}) \rar{\inv_w} \dar{\Nm} & \bQ/\bZ \dar{\id} \\
(\Br \bQ_v) \rar{\inv_v} & \bQ/\bZ
\end{tikzcd}
\]
we conclude that $\sum_w \theta_w(\lambda_w) = 0$. By Theorem \ref{thm:local-global} there exists a $\lambda$ in $\mu(E,\sigma)$ specialising to the chosen $\lambda_w$'s. The bilinear form $(E,b_\lambda)$  has signature $(r,s)$, and $(\bQ_p\otimes E,b_\lambda)$ contains an $\alpha$-stable even unimodular $\bZ_p$-lattice $\Lambda_p$ for all $p$.  For all but finitely many $p$ we may take $\Lambda_p=\bZ_p \otimes \cO_E$, and then
\[
	\Lambda := \{ x \in E \mid x \in \Lambda_p \text{ for all $p$} \}
\]
is an $\alpha$-stable even unimodular $\bZ$-lattice in $(E,b_\lambda)$.
\end{proof}

\bigskip

\end{document}